\author{Amir Akbar Tabatabai}
\theoremstyle{plain} 
\newtheorem{thm}{Theorem}[section]
\newtheorem{lem}[thm]{Lemma}
\newtheorem{cor}[thm]{Corollary}
\theoremstyle{definition}
\newtheorem{dfn}[thm]{Definition}
\newtheorem{exam}[thm]{Example}
\newtheorem{rem}[thm]{Remark}
\def\PA{\mathrm{PA}}
\def\PV{\mathrm{PV}}
\def\TI{\mathrm{TI}}
\def\BASIC{\mathrm{BASIC}}
\def\PRA{\mathrm{PRA}}
\def\EXP{\mathrm{EXP}}
\def\NP{\mathrm{NP}}
\def\TFNP{\mathrm{TFNP}}
\def\PRWO{\mathrm{PRWO}}
\def\E{\mathrm{E}}
\def\PLS{\mathrm{PLS}}
\def\GLS{\mathrm{GLS}}
\begin{document}
\title{Computational Flows in Arithmetic} 

\author{Amirhossein Akbar Tabatabai \footnote{The author is supported by the ERC Advanced Grant 339691 (FEALORA)}.\\
Institute of Mathematics\\
Academy of Sciences of the Czech Republic\\
tabatabai@math.cas.cz}

\date{\today}

\maketitle

\begin{abstract}
A computational flow is a pair consisting of a sequence of computational problems of a certain sort and a sequence of computational reductions among them. In this paper we will develop a theory for these computational flows and we will use it to make a sound and complete interpretation for bounded theories of arithmetic. This property helps us to decompose a first order arithmetical proof to a sequence of computational reductions by which we can extract the computational content of low complexity statements in some bounded theories of arithmetic such as $I\Delta_0$, $T^k_n$, $I\Delta_0+\EXP$ and $\PRA$. In the last section, by generalizing term-length flows to ordinal-length flows, we will extend our investigation from bounded theories to strong unbounded ones such as $I\Sigma_n$ and $\PA+\TI(\alpha)$ and we will capture their total $\NP$ search problems as a consequence.
\end{abstract}

\section{Introduction}
Intuitively speaking, proofs are information carriers and they transfer the informational content of the assumptions to the informational content of the conclusion. This open notion of content though admits different many interpretations in different many disciplines. The most trivial and the least informative one is the truth value of a sentence and it is pretty clear that this truth value is preserved by sound proofs. The other example, and the more useful interpretation, is \textit{the computational content} of a sentence, which plays the main role in the realm of proof theory and theoretical computer science. The notion of the computational content also admits different kinds of interpretations, from the witnesses of existential quantifiers a la Herbrand to dialectica-type interpretation of higher order arithmetical statements via G\"{o}del's type theory $T$. What we want to investigate in this paper is one of these computational interpretations and in the rest of this introduction we will try to explain it.\\

Let us explain the idea step by step. First of all, we will focus on our interpretation of the computational content of a sentence. The answer is simply the following: We will interpret a sentence as a computational problem and by its computational content we roughly mean \textit{any way} that can solve the problem computationally. It is clear that this notion of content is vague and imprecise but note that what is important is not the content itself but how it flows. (Compare this situation to the cardinal arithmetic where the notion of a cardinal of an infinite set is secondary compared to the notion of equipotency.)  Therefore, it is important to interpret the computational preservation of information and we have a very natural candidate for that: the computational reductions. Let us illuminate what we mean by an example. Consider the formula $\forall y \leq t(x) \exists z \leq s(x)A(x, y, z)$. What we mean by this sentence is the total search problem which reads $y$ in the domain $[t(x)]$ and finds $z$ in the range $[s(x)]$ such that $A(x, y, z)$ holds. This is a computational problem and by its content we mean any kind of computational method to solve this search problem. Now consider the situation that we have another search problem $\forall u \leq m(x) \exists v \leq n(x)B(x, u, v)$. The question is how it is possible to transfer the content of the first one to the content of the second one. In other words, if we have \textit{a way} to solve the first search problem, how can we find \textit{a way} to solve the second one? One of the many ways to reduce the second one to the first one is the reduction technique which we can define in the case of our example as the following: A computational reduction from $\forall u \leq m(x) \exists v \leq n(x)B(x, u, v)$ to $\forall y \leq t(x) \exists z \leq s(x)A(x, y, z)$ is the pair of two functions $f$ and $g$ with a certain complexity such that $f$ reads $u$ and finds $y=f(x, u)$ and $g$ reads $u, z$ and computes $v=g(x, u, z)$ such that 
\[
A(x, f(x, u), z) \rightarrow B(x, y, g(x, u, z)).
\]
So far, we have explained our interpretation of sentences and the way that the content is preserved. Now it is time to find a natural interpretation for proofs as the information carriers. For this goal, we will translate a first order proof of a sequent $\Gamma \Rightarrow \Delta$ to a sequence of simple provable computational reductions from $\bigwedge \Gamma$ to $\bigvee \Delta$ which formalizes the concept of a flow of computational information and for this reason we will call these sequences just computational flows or simply flows. Therefore, what we have to do is to show that this flow interpretation is sound and complete with respect to some certain theories, i.e, we have to show that if there exists a proof for $\Gamma \Rightarrow \Delta$ then there exists a flow from $\bigwedge \Gamma$ to $\bigvee \Delta$ and vice versa. This is the main goal of the whole paper. \\

As the final part of this introduction, let us say something about the structure of the paper. First of all, we will develop the theory on the abstract scale to make everything more clear and general. However, to control the problems arising from this extreme abstraction, we will limit ourselves just to the languages of arithmetic, the theories of bounded arithmetic and to some weak unbounded theories. Secondly and using these flows, we will reprove some recent characterizations of search problems in the Buss' hierarchy of bounded arithmetic via game induction principle \cite{ST}, \cite{Ta} or higher PLS problems \cite{BB} and a characterization of $\NP$ search problems of Peano Arithmetic \cite{Be}. Then we will generalize these results to prove some new characterizations of low-complexity search problems from higher order bounded theories of arithmetic and stronger theories such as $I\Delta_0+\EXP$ to strong fragments of Peano arithmetic like $I\Sigma_n$ or even stronger theories like $\PA+\TI(\alpha)$ for $\epsilon_0 \preceq \alpha$.

\section{The Theory of Flows}
In this section we will present a general definition of a bounded theory of arithmetic and then we will use two different types of flows to decompose the proofs of these theories.\\
First of all, let us fix a language which can be an arbitrary extension of a ring-type language for numbers:
\begin{dfn}\label{t2-1}
Let $\mathcal{L}$ be a first order language of arithmetic extending $\{0, 1, +, -, \cdot, \lfloor \frac{\cdot}{\cdot} \rfloor, \leq \}$. By $\mathcal{R}$ we mean the first order theory consisting of the axioms of commutative discrete ordered semirings (the usual axioms of commutative rings minus the existence of additive inverse plus the axioms to state that $\leq$ is a total discrete order such that $<$ is compatible with addition and multiplication with non-zero elements), plus the following defining axioms for $-$ and $\lfloor \frac{\cdot}{\cdot} \rfloor$:
\[
(x \geq y \rightarrow (x-y)+y=x ) \wedge
(x < y \rightarrow x-y=0),
\]
and
\[
((y+1) \cdot \lfloor \frac{x}{y} \rfloor \leq x) \wedge (x- (y+1) \cdot \lfloor \frac{x}{y} \rfloor < y+1).
\]
\end{dfn}
Note that to avoid division by zero and to have a total function symbol in the language, by $\lfloor \frac{x}{y} \rfloor$ we actually mean $\lfloor \frac{x}{y+1} \rfloor$.
\begin{dfn}\label{t2-2}
Let $\mathcal{B}$ be a theory. A class of terms, $\mathbb{T}$, is called a $\mathcal{B}$-term set if:
\begin{itemize}
\item[$(i)$]
It is closed under all $\mathcal{L}_{\mathcal{R}}$-basic term operations of the language $\mathcal{L}$ provably in $\mathcal{B}$, i.e. for any basic operation $f$ and any $t(\vec{x}) \in \mathbb{T}$, there exist $r(\vec{x}) \in \mathbb{T}$ such that $\mathcal{B} \vdash r(\vec{x})=f(t(\vec{x}))$.
\item[$(ii)$]
It is closed under substitution, i.e. if $t(\vec{x}, y) \in \mathbb{T}$ and $s$ is an arbitrary term (not necessarily in $\mathbb{T}$ ) then $t(\vec{x}, s) \in \mathbb{T}$ provably in $\mathcal{B}$, i.e. there exists $r(\vec{x}) \in \mathbb{T}$ such that $\mathcal{B} \vdash r(\vec{x})=t(\vec{x}, s)$.
\end{itemize}
Moreover, if a term set $\mathbb{T}$ has a subset of monotone majorizing terms provably in $\mathcal{B}$, it is called a $\mathcal{B}$-term ideal. By a monotone majorizing subset we mean a set of terms $X \subseteq \mathbb{T}$ such that for any $t(\vec{x}) \in \mathbb{T}$ there exists $s(\vec{x}) \in X$ such that $\mathcal{B} \vdash t(\vec{x}) \leq s(\vec{x})$ and for any $r(\vec{x}) \in X$, $\mathcal{B} \vdash \vec{x} \leq \vec{y} \rightarrow r(\vec{x}) \leq r(\vec{x})$.
\end{dfn}
\begin{exam}\label{t2-3}
For any theory $\mathcal{B}$ and any language extending $\mathcal{L}_{\mathcal{R}}$, there are two trivial $\mathcal{B}$-term sets. $\mathbb{T}_{all}$ consisting of all terms of the language and $\mathbb{T}_{cls}$ consisting of all closed terms. To have a non-trivial example, consider the language of bounded arithmetic extending the language of $\mathcal{R}$ and define $\mathbb{T}_{p}$ as the class of all terms majorized by a term in the form $p(|\vec{x}|)$ for some polynomial $p$ provably in $\BASIC+\mathcal{R}$. The majorzing subset is the set of all terms in the form $p(|\vec{x}|)$.
\end{exam}
\begin{dfn}\label{t2-4}
\begin{itemize}
\item[$(i)$]
By an $\mathcal{R}$-conjunction between $A(\vec{x})$ and $B(\vec{x})$ we mean a formula $C(\vec{x})$ such that $\mathcal{R} \vdash A(\vec{x}) \wedge B(\vec{x}) \leftrightarrow C(\vec{x})$.
\item[$(ii)$]
By an $\mathcal{R}$-disjunction between $A(\vec{x})$ and $B(\vec{x})$ we mean a formula $C(\vec{x})$ such that $\mathcal{R} \vdash A(\vec{x}) \vee B(\vec{x}) \leftrightarrow C(\vec{x})$.
\item[$(iii)$]
By an $\mathcal{R}$-negation for $A(\vec{x})$ we mean a formula $C(\vec{x})$ such that $\mathcal{R} \vdash \neg A(\vec{x}) \leftrightarrow C(\vec{x})$.
\item[$(iv)$]
By an $\mathcal{R}$-bounded universal quantification for $A(\vec{x}, y)$ we mean a formula $C(\vec{x})$ such that $\mathcal{R} \vdash \forall y \leq t(\vec{x}) A(\vec{x}, y) \leftrightarrow C(\vec{x})$.
\item[$(v)$]
By an $\mathcal{R}$-bounded existential quantification for $A(\vec{x}, y)$ we mean a formula $C(\vec{x})$ such that $\mathcal{R} \vdash \exists y \leq t(\vec{x}) A(\vec{x}, y) \leftrightarrow C(\vec{x})$.
\end{itemize}
\end{dfn}
Using the general setting we have set so far we can also define a general definition of $\pi$ and $\sigma$-classes.
\begin{dfn}\label{t2-5}
\begin{itemize}
\item[$(i)$]
A class of formulas $\Pi$ is called a $\pi$-class of the language $\mathcal{L}$ if it includes all quantifier-free formulas of $\mathcal{L}$, is closed under substitutions and subformulas, and is closed under an $\mathcal{R}$-conjunction, an $\mathcal{R}$-disjunction and an $\mathcal{R}$-bounded universal quantifier. And finally, if $\exists y \leq t \; B(y) \in \Pi$ then $B$ has an $\mathcal{R}$-negation in $\Pi$ and also $\forall y \leq t \; \neg B(y)$ has an $\mathcal{R}$-negation in $\Pi$, i.e $\neg \forall y \leq t  \; \neg B(y) \in \Pi$.
\item[$(ii)$]
A class of formulas $\Sigma$ is called a $\sigma$-class of the language $\mathcal{L}$ if it includes all quantifier-free formulas of $\mathcal{L}$, is closed under substitutions and subformulas, is closed under an $\mathcal{R}$-conjunction, an $\mathcal{R}$-disjunction and an $\mathcal{R}$-bounded existential quantifier. And finally, if $\forall y \leq t \; B(y) \in \Sigma$ then $B$ has an $\mathcal{R}$-negation in $\Sigma$ and also $\exists y \leq t \neg \; B(y)$ has an $\mathcal{R}$-negation in $\Sigma$, i.e $\neg \exists y \leq t \; \neg B(y) \in \Sigma$.
\end{itemize}
\end{dfn}
We can also define a bounded hierarchy:
\begin{dfn}\label{t2-6}
Let $\Phi$ be a class that includes all quantifier-free formulas and is closed under all boolean operations. The hierarchy $\{\Sigma_k(\Phi), \Pi_k(\Phi) \}_{k=0}^{\infty}$ is defined as the following:
\begin{itemize}
\item[$(i)$]
$\Pi_0(\Phi)=\Sigma_0(\Phi)$ is the class $\Phi$,
\item[$(ii)$]
If $B(x) \in \Sigma_k(\Phi)$ then $\exists x \leq t \; B(x) \in \Sigma_k(\Phi)$ and $\forall x \leq t \; B(x) \in \Pi_{k+1}(\Phi)$ and
\item[$(iii)$]
If $B(x) \in \Pi_k(\Phi)$ then $\forall x \leq t \; B(x) \in \Pi_k(\Phi)$ and $\forall x \leq t \; B(x) \in \Sigma_{k+1}(\Phi)$.
\end{itemize}
\end{dfn}
\begin{exam}\label{t2-7}
The most well-known examples of $\pi$ and $\sigma$-classes are $U^k$ and $\E^k$ classes in the language of Peano arithmetic, $\Pi^b_k$ and $\Sigma^b_k$ classes and $\hat{\Pi}^b_k$ and $\hat{\Sigma}^b_k$ classes in the language of bounded arithmetic. But there are also some other useful examples, like the classes based on doubly sharply bounded formulas following with alternating sharply bounded quantifiers in the language of bounded arithmetic plus the function $\#_3$.
\end{exam}
We are ready to state the general definition of bounded arithmetic.
\begin{dfn}\label{t2-8}
Let $\mathcal{A}$ be a set of quantifier-free axioms, $\mathbb{T}$ be a $\mathcal{A}$-term ideal and $\Phi$ be a class of bounded formulas closed under substitution and subformulas. By the first order bounded arithmetic, $\mathfrak{B}(\mathbb{T}, \Phi, \mathcal{A})$ we mean the theory in the language $\mathcal{L}$ which consists of axioms $\mathcal{A}$, and the $(\mathbb{T}, \Phi)$-induction axiom, i.e. ,
\[
A(0) \wedge \forall x (A(x) \rightarrow A(x+1)) \rightarrow \forall x A(t(x)),
\]
where $A \in \Phi$ and $t \in \mathbb{T}$. Equivalently, we can define $\mathfrak{B}(\mathbb{T}, \Phi, \mathcal{A})$ as a proof system of the following form:
\begin{flushleft}
	\textbf{Axioms:}
\end{flushleft}
\begin{center}
	\begin{tabular}{c c c}
		\AxiomC{ }
		\UnaryInfC{$  A \Rightarrow A$}
		\DisplayProof 
			&\;\;\;\;
		\AxiomC{ }
		\UnaryInfC{$  \bot \Rightarrow $}
		\DisplayProof 
		    &\;\;\;\;
	    \AxiomC{ }
		\LeftLabel{}
		\UnaryInfC{$  \Rightarrow A$}
		\DisplayProof 
	\end{tabular}
\end{center}
Where in the rightmost rule, $A \in \mathcal{A}$.
\begin{flushleft}
 		\textbf{Structural Rules:}
\end{flushleft}
\begin{center}
	\begin{tabular}{c}
		\begin{tabular}{c c}
		\AxiomC{$\Gamma  \Rightarrow \Delta$}
		\LeftLabel{\tiny{$ (wL) $}}
		\UnaryInfC{$\Gamma,  A  \Rightarrow \Delta$}
		\DisplayProof
			&
		\AxiomC{$\Gamma  \Rightarrow \Delta$}
		\LeftLabel{\tiny{$ ( wR) $}}
		\UnaryInfC{$\Gamma \Rightarrow  \Delta, A$}
		\DisplayProof
		\end{tabular}
			\\[3 ex]
			\begin{tabular}{c c}
		\AxiomC{$\Gamma, A, A \Rightarrow \Delta$}
		\LeftLabel{\tiny{$ (cL) $}}
		\UnaryInfC{$\Gamma,  A  \Rightarrow \Delta$}
		\DisplayProof
		    &
		\AxiomC{$\Gamma \Rightarrow \Delta, A, A$}
		\LeftLabel{\tiny{$ (cR) $}}
		\UnaryInfC{$\Gamma \Rightarrow \Delta, A$}
		\DisplayProof
		\end{tabular}
        \\[3 ex]
	    
	    \AxiomC{$\Gamma_0 \Rightarrow \Delta_0, A$}
	    \AxiomC{$\Gamma_1, A \Rightarrow \Delta_1$}
		\LeftLabel{\tiny{$ (cut) $}}
		\BinaryInfC{$\Gamma_0, \Gamma_1 \Rightarrow \Delta_0, \Delta_1$}
		\DisplayProof
	\end{tabular}
\end{center}		
\begin{flushleft}
  		\textbf{Propositional Rules:}
\end{flushleft}
\begin{center}
  	\begin{tabular}{c c}
  		\AxiomC{$\Gamma_0, A  \Rightarrow \Delta_0 $}
  		\AxiomC{$\Gamma_1, B  \Rightarrow \Delta_1$}
  		\LeftLabel{{\tiny $\vee L$}} 
  		\BinaryInfC{$ \Gamma_0, \Gamma_1, A \lor B \Rightarrow \Delta_0, \Delta_1 $}
  		\DisplayProof
	  		&
	   	\AxiomC{$ \Gamma \Rightarrow \Delta, A_i$}
   		\RightLabel{{\tiny $ (i=0, 1) $}}
   		\LeftLabel{{\tiny $\vee R$}} 
   		\UnaryInfC{$ \Gamma \Rightarrow \Delta, A_0 \lor A_1$}
   		\DisplayProof
	   		\\[3 ex]
   		\AxiomC{$ \Gamma, A_i \Rightarrow \Delta$}
   		\RightLabel{{\tiny $ (i=0, 1) $}} 
   		\LeftLabel{{\tiny $\wedge L$}}  		
   		\UnaryInfC{$ \Gamma, A_0 \land A_1 \Rightarrow \Delta, C $}
   		\DisplayProof
	   		&
   		\AxiomC{$\Gamma_0  \Rightarrow \Delta_0, A$}
   		\AxiomC{$\Gamma_1  \Rightarrow  \Delta_1, B$}
   		\LeftLabel{{\tiny $\wedge R$}} 
   		\BinaryInfC{$ \Gamma_0, \Gamma_1 \Rightarrow \Delta_0, \Delta_1, A \land B $}
   		\DisplayProof
   			\\[3 ex]
   		\AxiomC{$ \Gamma_0 \Rightarrow A, \Delta_0 $}
  		\AxiomC{$ \Gamma_1, B \Rightarrow \Delta_1, C $}
  		\LeftLabel{{\tiny $\rightarrow L$}} 
   		\BinaryInfC{$ \Gamma_0, \Gamma_1, A \rightarrow B \Rightarrow \Delta_0, \Delta_1, C$}
   		\DisplayProof
   			&
   		\AxiomC{$ \Gamma, A \Rightarrow B, \Delta $}
   		\LeftLabel{{\tiny $\rightarrow R$}} 
   		\UnaryInfC{$ \Gamma \Rightarrow \Delta, A \rightarrow B$}
   		\DisplayProof
   		\\[3 ex]
   		\AxiomC{$ \Gamma \Rightarrow \Delta, A $}
   		\LeftLabel{\tiny {$\neg L$}} 
   		\UnaryInfC{$ \Gamma, \neg A \Rightarrow \Delta$}
   		\DisplayProof
   			&
   		\AxiomC{$ \Gamma, A \Rightarrow \Delta $}
   		\LeftLabel{{\tiny $\neg R$}} 
   		\UnaryInfC{$ \Gamma \Rightarrow \Delta, \neg A$}
   		\DisplayProof
	\end{tabular}
\end{center}
\begin{flushleft}
  		\textbf{Quantifier rules:}
\end{flushleft}
\begin{center}
  	\begin{tabular}{c c}
  		\AxiomC{$\Gamma, A(s)  \Rightarrow \Delta $}
  		\LeftLabel{{\tiny $\forall L$}} 
  		\UnaryInfC{$ \Gamma, \forall y \; A(y) \Rightarrow \Delta $}
  		\DisplayProof
	  		&
	   	\AxiomC{$\Gamma  \Rightarrow \Delta, A(y) $}
  		\LeftLabel{{\tiny $\forall R$}} 
  		\UnaryInfC{$ \Gamma \Rightarrow \Delta, \forall y \; A(y) $}
  		\DisplayProof
	   		\\[3 ex]
   		\AxiomC{$\Gamma, A(y) \Rightarrow \Delta $}
  		\LeftLabel{{\tiny $\exists L$}} 
  		\UnaryInfC{$ \Gamma, \exists y \; A(y) \Rightarrow \Delta $}
  		\DisplayProof
	   		&
   		\AxiomC{$\Gamma \Rightarrow \Delta, A(s)  $}
  		\LeftLabel{{\tiny $\exists R$}} 
  		\UnaryInfC{$ \Gamma, \Rightarrow \Delta, \exists y \; A(y) $}
  		\DisplayProof
   			\\[3 ex]
	\end{tabular}
\end{center}
\begin{flushleft}
  		\textbf{Bounded Quantifier rules:}
\end{flushleft}
\begin{center}
  	\begin{tabular}{c c}
  		\AxiomC{$\Gamma, A(s)  \Rightarrow \Delta $}
  		\LeftLabel{{\tiny $\forall^{\leq} L$}} 
  		\UnaryInfC{$ \Gamma, s \leq t, \forall y \leq t \; A(y) \Rightarrow \Delta $}
  		\DisplayProof
	  		&
	   	\AxiomC{$\Gamma, y \leq t  \Rightarrow \Delta, A(y) $}
  		\LeftLabel{{\tiny $\forall^{\leq} R$}} 
  		\UnaryInfC{$ \Gamma \Rightarrow \Delta, \forall y \leq t \; A(y) $}
  		\DisplayProof
	   		\\[3 ex]
   		\AxiomC{$\Gamma, y \leq t, A(y) \Rightarrow \Delta $}
  		\LeftLabel{{\tiny $\exists^{\leq} L$}} 
  		\UnaryInfC{$ \Gamma, \exists y \leq t \; A(y) \Rightarrow \Delta $}
  		\DisplayProof
	   		&
   		\AxiomC{$\Gamma \Rightarrow \Delta, A(s)  $}
  		\LeftLabel{{\tiny $\exists^{\leq} R$}} 
  		\UnaryInfC{$ \Gamma, s \leq t, \Rightarrow \Delta, \exists y \leq t \; A(y) $}
  		\DisplayProof
   			\\[3 ex]
	\end{tabular}
\end{center}
And the following induction rule:
\begin{flushleft}
 		\textbf{Induction:}
\end{flushleft}
\begin{center}
	\begin{tabular}{c}
	    \AxiomC{$\Gamma, A(y) \Rightarrow \Delta, A(y+1)$}
		\LeftLabel{\tiny{$ (Ind) $}}
		\UnaryInfC{$\Gamma, A(0) \Rightarrow \Delta, A(t)$}
		\DisplayProof
	\end{tabular}
\end{center}		
For every $A \in \Phi$ and $t \in \mathbb{T}$.
\end{dfn}
\begin{exam}\label{t2-9}
With our definition of bounded arithmetic, different kinds of theories can be considered as bounded theories of arithmetic, for instance $I\Delta_0$, $S_i^k$, $T^k_i$, $I\Delta_0+\EXP$ and $\PRA$ are just some of the well-known examples.
\end{exam}
The most important property of the sequent calculus of bounded theories of arithmetic is cut elimination:
\begin{thm}\label{t2-10}(Cut Elimination)
If $\mathfrak{B}(\mathbb{T}, \Phi, \mathcal{A}) \vdash \Gamma \Rightarrow \Delta$ then there exists a free-cut free proof for the same sequent in the same system.
\end{thm}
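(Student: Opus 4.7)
The plan is to adapt Gentzen's classical cut-elimination procedure, treating the non-logical axioms from $\mathcal{A}$ and the induction rule as the sources of \emph{anchored} cuts that we do not attempt to remove. Recall that a cut is called free when neither occurrence of its cut formula is a direct descendant of a principal formula of a non-logical axiom or of an induction inference. I would assign to each free cut a complexity pair consisting of the logical rank of the cut formula and the sum of the heights of the two premise subproofs, and argue by well-founded induction on the multiset of these pairs, repeatedly eliminating the uppermost free cut of maximal rank until none remain.

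The reductions I would carry out are by and large the familiar ones. First, if one of the premises of the topmost maximal-rank free cut ends in a rule that does not introduce the cut formula as principal, I permute the cut upward past that rule, producing free cuts of strictly smaller height (and possibly duplicating them when the rule is binary). Second, if both premises introduce the cut formula as principal by a logical rule (propositional connective, negation, quantifier, or bounded quantifier), I apply the standard Gentzen reduction, replacing the cut by cuts on strictly simpler subformulas; in the quantifier cases the witnessing term is substituted for the eigenvariable, and in the bounded-quantifier cases one additionally carries the side bound $s \leq t$ through the reduction. The axiom cases ($A \Rightarrow A$, $\bot \Rightarrow$, and the rules introducing formulas from $\mathcal{A}$) are handled by either erasing the cut outright or by pushing it past the axiom, at which point the cut becomes anchored and hence is no longer free.

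The main obstacle will be the interaction between cut elimination and the induction rule. When the cut formula descends from the conclusion $A(t)$ of an induction step, the cut is by definition anchored and we simply leave it alone; when the cut formula is merely a side formula of an induction, one permutes the cut past the induction inference, duplicating the induction premise if necessary but leaving the rank of the cut unchanged. A minor technical point that must be tracked throughout is that permutations past quantifier rules, induction, or the bounded-quantifier rules may require renaming eigenvariables to avoid capture, which is routine. The result is a proof in which every remaining cut is anchored either to an axiom of $\mathcal{A}$ (whose cut formula is therefore quantifier-free and so in $\Phi$) or to an induction inference (whose cut formula lies in $\Phi$ by hypothesis), which is precisely the free-cut-free form asserted.
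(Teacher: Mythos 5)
The paper does not actually give a proof of this theorem: it is stated as a known result and used as a black box in the subsequent development (the free-cut elimination theorem for bounded arithmetic goes back to Takeuti and is treated in detail in Buss's \emph{Bounded Arithmetic}). Your sketch is a faithful and essentially correct account of the standard argument, so there is no route in the paper to compare against; what you have written is the kind of proof the author is implicitly relying on.

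Two small remarks on your wording. First, in your definition of free cut you write ``direct descendant,'' but what is meant is descendant in the transitive sense: a cut is anchored as long as one of its cut-formula occurrences traces back, through any number of inferences, to a formula occurring in a non-logical axiom or to the principal formula $A(\cdot)$ of an induction inference. If you only insist on one-step descendance, you would classify far too many cuts as free and the termination argument would not survive the permutation steps. Second, your complexity measure (a multiset of rank--height pairs under the multiset ordering) does give a well-founded induction, but be aware that the more common presentation does a double induction: an outer induction on the maximal cut rank present, and an inner induction on proof height to eliminate a single topmost cut of that rank; this avoids some bookkeeping when a permutation duplicates subproofs. Finally, note that the conclusion ``whose cut formula is therefore quantifier-free and so in $\Phi$'' uses the hypothesis (implicit in the paper's setup, cf.\ Definitions \ref{t2-5} and \ref{t2-6}) that $\Phi$ contains all quantifier-free formulas; it is worth stating this explicitly since Definition \ref{t2-8} on its own does not say so.
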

The following corollary is very useful:
\begin{cor}\label{t2-11}
If $\Gamma \cup \Delta \subseteq \Phi$ and $\mathfrak{B}(\mathbb{T}, \Phi, \mathcal{A}) \vdash \Gamma \Rightarrow \Delta$ then there exists a proof of the same sequent in the same sytem such that all formulas occurring in the proof is in $\Phi$. 
\end{cor}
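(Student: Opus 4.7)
The plan is to invoke Theorem \ref{t2-10} to replace the given derivation by a free-cut-free proof $\pi$ of $\Gamma \Rightarrow \Delta$, and then to verify by induction on the depth of $\pi$ that every formula occurring anywhere in $\pi$ lies in $\Phi$. The workhorse of the argument is the subformula property of free-cut-free proofs: each formula occurrence in $\pi$ is a substitution instance of a subformula of one of three kinds of source formulas, namely (a) a formula of the end-sequent $\Gamma \cup \Delta$, (b) a non-logical axiom in $\mathcal{A}$, or (c) an induction eigenformula $A$ introduced at some application of $(Ind)$.

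First I would check that each source class sits inside $\Phi$. Sequent formulas are in $\Phi$ by hypothesis; induction eigenformulas are in $\Phi$ by the side condition imposed on the rule $(Ind)$ in Definition \ref{t2-8}; and formulas of $\mathcal{A}$ are quantifier-free, hence in $\Phi$ under the standing assumption on $\Phi$ (which mirrors the corresponding clause in Definition \ref{t2-5}). Second, I would trace ancestors upward through $\pi$: structural rules preserve the set of formulas, propositional rules yield proper subformulas of their principal formulas, and the quantifier and bounded-quantifier rules yield a substitution instance $A(s)$ of a subformula $A(y)$; the side conditions of the bounded-quantifier rules additionally introduce atomic formulas $s \leq t$ which are quantifier-free and thus in $\Phi$. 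Because $\pi$ is free-cut-free, every cut formula is anchored, so it too descends from an axiom of $\mathcal{A}$ or from an induction eigenformula and is therefore a substitution instance of a subformula of a source formula. Closure of $\Phi$ under subformulas and under substitution, together with the closure of $\mathbb{T}$ under substitution, then propagates membership in $\Phi$ throughout $\pi$.

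The main obstacle is making the ``subformula-up-to-substitution'' analysis uniform across the quantifier rules and $(Ind)$, since these steps replace a free variable by a term and hence do not produce literal subformulas. This is precisely the reason that both closure conditions on $\Phi$ are imposed in Definition \ref{t2-8}: closure under subformulas handles the propositional steps and the anchored cuts, while closure under substitution absorbs the term-substitution steps arising from the quantifier rules and from the induction rule. A minor secondary point is that the terms $s$ appearing in the quantifier and $(Ind)$ rules of a free-cut-free proof of a sequent whose induction rules are indexed by terms in $\mathbb{T}$ need not themselves lie in $\mathbb{T}$, but since the only condition demanded here is membership of \emph{formulas} in $\Phi$, this is harmless.
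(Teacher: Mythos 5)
Your proposal is correct and is the standard argument that the paper implicitly invokes (the paper states Corollary \ref{t2-11} without proof, treating it as the usual subformula-property consequence of free-cut elimination). You correctly identify the three sources of formula occurrences in a free-cut-free proof (end-sequent, non-logical axioms, induction eigenformulas), check that each lies in $\Phi$, and then push membership in $\Phi$ upward through the proof using closure under subformulas and under substitution; you also correctly flag that the side formulas $s \leq t$ introduced by the bounded-quantifier rules and the quantifier-free axioms of $\mathcal{A}$ require $\Phi$ to contain quantifier-free formulas, which Definition \ref{t2-8} does not state outright but is clearly the intended standing hypothesis (it is made explicit in Definitions \ref{t2-5} and \ref{t2-6} and holds in every instance considered). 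Your closing observation that the terms introduced by quantifier rules need not lie in $\mathbb{T}$ but that this is harmless for a claim about \emph{formulas} being in $\Phi$ is also correct.
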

In the following we will define two different types of reductions as the building blocks of flows. These reductions are generalizations of the usual reductions in computablity theory, from many to one reductions between recursive languages to polytime reductions between total $\NP$ search problems. 

\begin{dfn}\label{t2-12}
Let $A(\vec{x})$ and $B(\vec{x})$ be some formulas in $\Pi_k(\Phi)$ and $\{F_i\}_{i=1}^k$ be a sequence of terms. By recursion on $k$, we will define $F=\{F_i\}_{i=1}^k$ as a deterministic $\Pi_k(\Phi)$-reduction from $B(\vec{x})$ to $A(\vec{x})$ and we will denote it by $A(\vec{x}) \leq_{d}^{F, k} B(\vec{x})$ when:
\begin{itemize}
\item[$(i)$]
If $A(\vec{x}), B(\vec{x})$ are in $\Pi_0(\Phi)$, we say that the empty sequence of functions is a deterministic reduction from $B$ to $A$ iff $\mathcal{B} \vdash A(\vec{x}) \rightarrow B(\vec{x})$.
\item[$(ii)$]
If $A=\forall \vec{u} \leq \vec{p}(\vec{x}) C(\vec{x}, \vec{u})$, $B=\forall \vec{v} \leq \vec{q}(\vec{x}) D(\vec{x}, \vec{v})$ and $F=\{F_{i} \}_{i=1}^{k+1}$ is a sequence of terms, then $A(\vec{x}) \leq_{d}^{F, k+1} B(\vec{x})$ iff
\[
\mathcal{B} \vdash \vec{v} \leq \vec{q}(\vec{x}) \rightarrow F_{k+1}(\vec{x}, \vec{v}) \leq \vec{p}(\vec{x})
\]
and
\[
F_{k+1}(\vec{x}, \vec{v}) \leq \vec{p}(\vec{x}) \rightarrow C(\vec{x}, F_{k+1}(\vec{x}, \vec{v})) \leq_{d}^{\hat{F}, k} \vec{v} \leq \vec{q}(\vec{x}) \rightarrow D(\vec{x}, \vec{v})
\]
where $\hat{F}=\{F_{i}\}_{i=1}^k$.
\item[$(iii)$]
If $A=\exists \vec{u} \leq \vec{p}(\vec{x}) C(\vec{x}, \vec{u})$, $B=\exists \vec{v} \leq \vec{q}(\vec{x}) D(\vec{x}, \vec{v})$ and $F=\{F_{i} \}_{i=1}^{k+1}$ is a sequence of terms, then $A(\vec{x}) \leq_{d}^{F, k+1} B(\vec{x})$ iff
\[
\mathcal{B} \vdash \vec{u} \leq \vec{p}(\vec{x}) \rightarrow F_{k+1}(\vec{x}, \vec{u}) \leq \vec{q}(\vec{x})
\]
and
\[
\vec{y} \leq \vec{p}(\vec{x}) \wedge C(\vec{x}, u) \leq_{d}^{\hat{F}, k}  F_{k+1}(\vec{x}, \vec{u}) \leq \vec{q}(\vec{x}) \wedge D(\vec{x}, F_{k+1}(\vec{x}, \vec{u}))
\]
where $\hat{F}=\{F_{i}\}_{i=1}^k$.
\end{itemize}
We say $B$ is $(\Pi_k(\Phi), \mathcal{B})$-deterministicly reducible to $A$ and we write $A \leq_d^{(\Pi_k(\Phi), \mathcal{B})} B$, when there exists a sequence of terms $F$ such that $A \leq^{F, k}_{d} B$.
\end{dfn}

\begin{dfn}\label{t2-13}
Let $\mathcal{B}$ be a first order bounded arithmetic and $A(\vec{x})$ and $B(\vec{x})$ be some formulas in the language $\mathcal{L}$. We say $B$ is non-deterministically $\mathcal{B}$-reducible to $A(\vec{x})$ and we write $A(\vec{x}) \leq_{n}^{\mathcal{B}} B(\vec{x}) $ if $\mathcal{B} \vdash A(\vec{x}) \rightarrow B(\vec{x})$. 
\end{dfn}
The natural question is that how this proof-theoretic based concept can be called a computational reduction and if so, why is it a non-deterministic reduction as opposed to the above-mentioned deterministic reduction? The answer is the following well-known Herbrand theorem:
\begin{thm}\label{t2-14}(Herbrand Theorem)
If $\mathcal{B}$ is a universal bounded arithmetic then the following are equivalent:
\begin{itemize}
\item[$(i)$]
$A(\vec{x}) \leq_{n}^{\mathcal{B}} B(\vec{x})$.
\item[$(ii)$]
There exists a Herbrand proof for $A(\vec{x}) \rightarrow B(\vec{x})$ in $\mathcal{B}$.
\end{itemize}
\end{thm}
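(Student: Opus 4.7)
The two directions will be handled separately.

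For the easy direction $(ii) \Rightarrow (i)$: a Herbrand proof of $A(\vec x) \rightarrow B(\vec x)$ is, in essence, a $\mathcal{B}$-proof of a finite propositional disjunction of quantifier-free instances of $A$ and $B$ obtained from explicit witness terms. Reintroducing the existential quantifiers on the right via $\exists R$ (and $\exists^{\leq} R$) and the universal quantifiers on the left via $\forall L$ (and $\forall^{\leq} L$), interleaved with the contraction rules to collapse the disjuncts, yields a sequent-calculus derivation of $A(\vec x) \Rightarrow B(\vec x)$ in $\mathcal{B}$. Hence $A(\vec x) \leq_n^{\mathcal{B}} B(\vec x)$.

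For the substantive direction $(i) \Rightarrow (ii)$: the main tool is cut elimination. Starting from a $\mathcal{B}$-proof of $A(\vec x) \Rightarrow B(\vec x)$, I would first apply Theorem \ref{t2-10} to obtain a free-cut-free proof $\pi$. Since $\mathcal{B}$ is universal, every axiom in $\mathcal{A}$ is quantifier-free, so every remaining cut formula in $\pi$ is quantifier-free; consequently the only quantified formulas anywhere in $\pi$ are ancestors of the end-sequent $A(\vec x) \Rightarrow B(\vec x)$. The Herbrand disjunction is then constructed by induction on the height of $\pi$: to every subproof ending in a sequent $\Gamma \Rightarrow \Delta$ one assigns a finite family of term-tuples that witness the existential quantifiers on the right and the universal quantifiers on the left of ancestor formulas, in such a way that the resulting quantifier-free instance of $\Gamma \Rightarrow \Delta$ is provable in $\mathcal{B}$. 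The rules $\forall L$ and $\exists R$ simply absorb the side-condition term $s$ into the family, while the propositional, quantifier-free cut, and weakening rules combine or refactor families in the obvious way.

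The main obstacle is the treatment of the contraction rules $(cL)$ and $(cR)$: when two copies of a quantified formula are contracted, their witness families --- potentially using distinct terms --- must be merged, and iterating this yields a Herbrand disjunction whose size is exponential in the proof height. A careful bookkeeping is required to control this blow-up and, simultaneously, to verify that the extracted witnessing terms remain within the term ideal $\mathbb{T}$; the closure conditions in Definition \ref{t2-2} are precisely what is needed for this verification. A subordinate subtlety is that the bounded-quantifier rules $\forall^{\leq} L$ and $\exists^{\leq} R$ require each extracted witness to satisfy the relevant term-bound, but this is ensured by the side-conditions attached to these rules together with the monotone majorizing subset supplied by the term ideal.
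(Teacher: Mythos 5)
The paper gives no proof of Theorem~\ref{t2-14}: it is introduced as ``the following well-known Herbrand theorem'' and thereafter used as a black box. Your sketch follows the standard route (free-cut elimination, then a Herbrand disjunction built by induction on proof height, with contraction as the source of exponential blow-up), which is certainly the argument the author had in mind, so there is no paper proof to compare against; I assess the sketch on its own.

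Two points deserve correction. First, the final paragraph about verifying that ``the extracted witnessing terms remain within the term ideal $\mathbb{T}$'' addresses a constraint that the theorem does not impose. The statement asks only for a Herbrand proof of $A(\vec{x}) \rightarrow B(\vec{x})$ in $\mathcal{B}$, i.e.\ a provable disjunction of quantifier-free instances whose witnesses are arbitrary $\mathcal{L}$-terms; there is no requirement that those witnesses belong to $\mathbb{T}$, and the closure conditions of Definition~\ref{t2-2} play no role here. Second, your inference that ``every remaining cut formula in $\pi$ is quantifier-free'' requires more than $\mathcal{A}$ consisting of quantifier-free axioms: if $\mathcal{B}$ carries the induction rule of Definition~\ref{t2-8} for a non-trivial $\Phi$, free-cut elimination still leaves cuts on ancestors of induction formulas, which may be quantified. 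The hypothesis that $\mathcal{B}$ is ``universal'' must therefore be read as saying $\mathcal{B}$ is induction-free (a pure universal base theory such as $\PV$ or $\mathcal{R}$), and this should be stated explicitly, since it is exactly what makes the free-cut-free proof have the midsequent form you then exploit. With those two adjustments the outline is sound.
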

Generally speaking, we intend to decompose arithmetical proofs to a sequence of reductions, and the base theory for those reductions preferably are simple universal and possibly induction-free theories. Therefore, we can use the Herbrand theorem for each step of the reduction to witness the essentially existential quantifiers in $A \rightarrow B$. This is actually what is happening in the deterministic reductions, but here the difference is the use of $\vee$-expansions in the Herbrand proof. Intuitively, these expansions allow us to use some constantly many terms to witness one existential quantifier as opposed to just one term in the case of deterministic reductions. Moreover, expansions make some room for interaction in providing the witnessing terms which makes the concrete witnesses extremely complicated. For these reasons, we call these reductions non-deterministic. \\
 
In the following examples we will illuminate the difference between deterministic and non-deterministic reductions and the importance and the naturalness of the latter.
\begin{exam}\label{t2-15}
Let $A(x, y) \in \Pi^b_k$ be a formula and consider the sentences $ \exists y, z \leq t \; (A(x, y) \vee  A(x, z))$ and $ \exists w \leq t \; A(x, w)$. Intuitively, the first formula is equivalent to $ \exists y \leq t \; A(x, y) \vee \exists z \leq t \; A(x, z)$ which is equivalent to the the second formula $\exists w \leq t \; A(x, w)$. Therefore, it seems quite reasonable to assume that if we have the second one, we can reduce the first one to it. Moreover, since this equivalence is quite elementary and it is just on the level of pure first order logic, we expect the reduction to have the lowest possible complexity. Fortunately, for the non-deterministic reduction it is obviously the case. But let us try to understand how the computational aspect of this reduction works. To do so, we have to take a look at a proof of the statement $\exists y, z \leq t \; (A(x, y) \vee  A(x, z)) \rightarrow \exists w \leq t \; A(x, w)$. The most simple proof works as follows: Assume $y$ and $z$ such that $A(x, y) \vee  A(x, z)$. Then there are two possibilities: If $A(x, y)$ then pick $w=y$ and if $A(x, z)$ then pick $w=z$. In a more computational interpretation, if we define $g(x, y, z)=y$, $h(x, y, z)=z$ we have
\[
A(x, y) \vee  A(x, z) \leq_{d} A(x, g(x, y, z)) \vee A(x, h(x, y, z)).
\]
What does it mean? It simply means that to have a reduction from the second statement to the first one we need two different copies of $\exists w \leq t A(x, y)$; one to handle the case $A(x, y)$ and the other to handle the case $A(x, z)$. This is available in proof theory via the contraction rule and it is absent in the computational interpretations of reduction. To fill this gap we allow these different copies which can be considered as some kind of non-determinism.
\end{exam}
\begin{exam}\label{t2-16}
In this example we want to show that it is generally impossible to simulate the non-deterministic reductions by deterministic ones. For that reason, we use a special case of the last example. Assume $A(x, y, z, t)=(y=0 \wedge B(x, t)) \vee (y=1 \wedge \neg B(x, z))$ where $B(x, t) \in \Pi_0^b$ is an arbitrary formula and the language consists of all polynomial computable functions ($\mathcal{L}_{\PV}$). We want to show that there is no polynomial time computable reduction from
\[
\exists y, y' \leq 1 \exists t, t' \leq s \forall z, z' \leq s \; (A(x, y, z, t) \vee  A(x, y', z', t'))  
\]
to
\[
\exists u \leq 1 \exists v \leq s \forall w \leq s \; A(x, u, v, w)
\]
even if we assume $\mathcal{B}=Th(\mathbb{N})$. Assume that there exists a polytime reduction, hence there exist a polytime function $f$ such that:
\[
\exists t, t' \leq s \forall z, z' \leq s \; (A(x, y, z, t) \vee  A(x', y', z', t'))  
\]
implies 
\[
\exists v \leq s \forall w \leq s \; A(x, f(x, y, y'), v, w).
\]
Pick $y=0$ and $y'=1$. It is easy to see that the left side is true because either $\exists t \leq s \; B(x, t)$ or $\forall z \leq s \; \neg B(x, z)$ is true, hence the right side should be true, as well. But the truth of the right side means
\[
(f(x, 0, 1)=0 \wedge \exists v \leq s \; B(x, v)) \vee (f(x, 0, 1)=1 \wedge \forall w \leq s \; \neg B(x, w))
\]
which means that we have a polytime decision procedure for the $\NP$ predicate $\exists w \leq s \; B(x, v)$ which implies $\mathbf{NP}=\mathbf{P}$. 
\end{exam}
\begin{rem}\label{t2-17}
The example \ref{t2-16} shows that pure logical deductions are far beyond the power of deterministic reductions. In other words, it is possible to prove $B$ by $A$ just by some elementary methods of logic but it does not mean that $B$ can be reducible to $A$. Let us explain where the problem is. At the first glance, it seems that all logical rules are completely syntactical and amenable to low complexity reductions. It is correct everywhere except for one logical rule: The contraction rule which is more or less responsible for all kinds of computational explosions like the explosion of the lengths of the proofs after the elimination of cuts. Notice that the reason that we have the equivalence in the Example \ref{t2-15} is this contraction rule and it is easy to see that this rule is a source of some non-determinism. Therefore, it seems natural to use non-deterministic reductions to simulate computationally what is going on in the realm of proofs.   
\end{rem}
So far, we have defined the concept of reduction which can be considered as a way to transfer the computational content of the source to the content of the target. They are similar to simple syntactic rules in the general proof theory. Then what is the counterpart of the concept of a proof (which is basically a combination of those simple rules)? The answer is the notion of a flow; a sequence of reductions which allows us to transfer information and computational contents.
\begin{dfn}\label{t2-18}
Let $\Pi$ be a $\pi$-class, $A(\vec{x}), B(\vec{x}) \in \Pi$ and $\mathbb{T}$ a term ideal. A non-deterministic $(\mathbb{T}, \Pi, \mathcal{B})$-flow from $A(\vec{x})$ to $B(\vec{x})$ is a pair $(t, H)$ where $t(\vec{x}) \in \mathbb{T}$ is a term and $H(u, \vec{x}) \in \Pi$ is a formula such that the following statements are provable in $\mathcal{B}$:
\begin{itemize}
\item[$(i)$]
$H(0, \vec{x}) \leftrightarrow A(\vec{x})$.
\item[$(ii)$]
$ H(t(x), \vec{x}) \leftrightarrow B(\vec{x})$.
\item[$(iii)$]
$\forall u < t(x) \; H(u, \vec{x}) \rightarrow H(u+1, \vec{x})$.
\end{itemize}
If there exists a non-deterministic $(\mathbb{T}, \Pi, \mathcal{B})$-flow from $A(\vec{x})$ to $B(\vec{x})$ we will write $A(\vec{x}) \rhd_{n}^{(\mathbb{T}, \Pi, \mathcal{B})} B(\vec{x})$. Moreover, if $\Gamma$ and $\Delta$ are sequents of formulas in $\Pi$, by $\Gamma \rhd_n^{(\mathbb{T}, \Pi, \mathcal{B})} \Delta$ we mean $\bigwedge \Gamma \rhd_n^{(\mathbb{T}, \Pi, \mathcal{B})} \bigvee \Delta$.
\end{dfn}
And also we have deterministic flows:
\begin{dfn}\label{t2-19}
Let $A(\vec{x}), B(\vec{x}) \in \Pi_k(\Phi)$. A $(\Pi_k(\Phi), \mathcal{B})$-deterministic flow from $A(\vec{x})$ to $B(\vec{x})$ is the following data: A term $t(\vec{x})$, a formula $H(u, \vec{x}) \in \Pi_k(\Phi)$ and sequences of terms $E_0$, $E_1$, $G_0$, $G_1$ and $F(u)$ such that the following statements are provable in $\mathcal{B}$:
\begin{itemize}
\item[$(i)$]
$H(0, \vec{x}) \equiv_{d}^{(E_0, E_1)} A(\vec{x})$.
\item[$(ii)$]
$H(t(x), \vec{x})\equiv_{d}^{(G_0, G_1)} B(\vec{x})$.
\item[$(iii)$]
$\forall u < t(x) H(u, \vec{x}) \leq_{d}^{F(u)} H(u+1, \vec{x})$.
\end{itemize}
If there exists a deterministic $(\Pi_k(\Phi), \mathcal{B})$-flow from $A(\vec{x})$ to $B(\vec{x})$ we will write $A(\vec{x}) \rhd_d^{(\Pi_k(\Phi), \mathcal{B})} B(\vec{x})$.  Moreover, if $\Gamma$ and $\Delta$ are sequents of formulas in $\Pi_k(\Phi)$, by $\Gamma \rhd_d^{(\Pi_k(\Phi), \mathcal{B})} \Delta$ we mean $\bigwedge \Gamma \rhd_d^{(\Pi_k(\Phi), \mathcal{B})} \bigvee \Delta$.
\end{dfn}

In the following we will prove a sequence of lemmas to make a high-level calculus of deterministic and non-deterministic flows. Then we will use this calculus to show that this flow interpretation is sound and complete with respect to the corresponding bounded arithmetic. All lemmas are true both for deterministic and non-deterministic flows, but note that for the deterministic flows we mean the $(\mathbb{T}, \Pi_k(\Phi), \mathcal{B})$-flow and for the non-deterministic case we mean the $(\Pi, \mathcal{B})$-flow all the time. Therefore, when we write $A \in \Pi$, for the deterministic case we mean $A \in \Pi_k(\Phi)$ and when we write $\rhd$ we mean both deterministic and non-deterministic cases.
\begin{lem}\label{t2-21}(Conjunction Application)
Let $C(\vec{x}) \in \Pi$ be a formula. If $A(\vec{x}) \rhd B(\vec{x}) $ then $A(\vec{x}) \wedge C(\vec{x}) \rhd B(\vec{x}) \wedge C(\vec{x}) $.
\end{lem}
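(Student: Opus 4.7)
My plan is to keep the length of the flow the same and to thread $C(\vec{x})$ through every intermediate formula. Concretely, given a flow $(t,H)$ witnessing $A(\vec{x}) \rhd B(\vec{x})$, I would define the new intermediate formula $H'(u,\vec{x})$ to be the $\mathcal{R}$-conjunction of $H(u,\vec{x})$ and $C(\vec{x})$; because $\Pi$ is closed under $\mathcal{R}$-conjunctions, $H'(u,\vec{x}) \in \Pi$, and the same term $t(\vec{x}) \in \mathbb{T}$ serves as the length.

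Next I would verify the three defining clauses of a flow for the pair $(t,H')$. The boundary clauses $H'(0,\vec{x}) \leftrightarrow A(\vec{x}) \wedge C(\vec{x})$ and $H'(t(\vec{x}),\vec{x}) \leftrightarrow B(\vec{x}) \wedge C(\vec{x})$ follow from conjoining $C$ to both sides of the corresponding equivalences for $H$, using that $\mathcal{R}$-provable equivalence is preserved by propositional conjunction. The step clause $H'(u,\vec{x}) \rightarrow H'(u+1,\vec{x})$ follows from $H(u,\vec{x}) \rightarrow H(u+1,\vec{x})$ by simply carrying the second conjunct along, again noting that the resulting formula is $\mathcal{R}$-equivalent to the chosen $\mathcal{R}$-conjunction. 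In the non-deterministic case this already finishes the argument, since by Definition~\ref{t2-13} reductions are nothing more than provability in $\mathcal{B}$, and all three items are established at that level.

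For the deterministic version I would factor the work through a short sub-lemma: if $A \leq_d^{F,k} A'$ then $A \wedge C \leq_d^{F',k} A' \wedge C$ for some $F'$ obtained from $F$ by padding with projection terms that act trivially on the $C$-block. This sub-lemma is proved by induction on $k$ directly from the recursive definition of $\leq_d^{F,k}$ in Definition~\ref{t2-12}: at the base ($k=0$) it is pure $\mathcal{B}$-provability, and at the inductive step one takes a prenex form of the conjunction, merging the outer quantifier blocks and letting the witnessing term $F_{k+1}$ act only on the components originating from $A'$. Applying this sub-lemma to each of the three pieces of data $(E_0,E_1)$, $(G_0,G_1)$ and $F(u)$ attached to the original flow yields the corresponding data for the new flow $(t,H')$.

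The main obstacle is the bookkeeping inside this sub-lemma, namely verifying that the prenex merger of two $\Pi_k(\Phi)$ formulas stays within $\Pi_k(\Phi)$ and that the padded witnessing terms remain in $\mathbb{T}$. For the former I would use that $\Phi$ is closed under boolean operations (Definition~\ref{t2-6}) so the innermost matrices pose no difficulty, and for the latter I would invoke closure of $\mathbb{T}$ under basic operations and substitution (Definition~\ref{t2-2}(i)-(ii)) to form the componentwise maxima of the bounds and the projection witnesses. No new induction principle is invoked beyond what is already encoded in the length-$t$ indexing of the flow itself.
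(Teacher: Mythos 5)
Your proof is correct and follows essentially the same route as the paper: both keep the length $t$, set $H'(u,\vec{x}) = H(u,\vec{x}) \wedge C(\vec{x})$, and extend the witnessing term sequences by having the quantifiers coming from $C$ be witnessed by identity/projection terms, while the non-deterministic case falls out immediately from monotonicity of provable implication under conjunction. The sub-lemma you isolate for the deterministic case is exactly what the paper compresses into the phrase ``extending their counterparts by using the quantifiers in $C$ to witness themselves by the identity terms,'' so you are simply being more explicit about the same construction.
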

\begin{proof}
For the deterministic case, since $A(\vec{x}) \rhd_d B(\vec{x}) $, by Definition \ref{t2-19}, there exists a term $t(\vec{x})$, a formula $H(u, \vec{x}) \in \Pi_k(\Phi)$ and sequences of terms $E_0$, $E_1$, $G_0$, $G_1$ and $F(u)$ such that 
\[
\mathcal{B} \vdash A(\vec{x}) \equiv^{E_0, E_1} H(0, \vec{x}),
\]
\[
\mathcal{B} \vdash B(\vec{x}) \equiv^{G_0, G_1} H(t(\vec{x}), \vec{x}),
\]
and
\[
\mathcal{B} \vdash \forall u < t(\vec{x}) \; H(u, \vec{x}) \leq_d^{F(u)} H(u+1, \vec{x}).
\]
Now define $t'=t$, $H'(u, \vec{x})=H(u, \vec{x}) \wedge C(\vec{x})$ and $E'_0$, $E'_1$, $G'_0$, $G'_1$ and $F'(u)$ as the corresponding sequences of terms extending their counterparts by using the quantifiers in $C$ to witness themselves by the identity terms. It is clear that the new data is a deterministic $(\Pi_k(\Phi), \mathcal{B})$-flow from $A(\vec{x}) \wedge C(\vec{x})$ to $B(\vec{x}) \wedge C(\vec{x})$. \\

For the non-deterministic case do the same, without the sequences of the terms and use the fact that if 
\[
\mathcal{B} \vdash H(u, \vec{x}) \rightarrow H(u+1, \vec{x})
\]
then,
\[
\mathcal{B} \vdash H(u, \vec{x}) \wedge C(\vec{x}) \rightarrow H(u+1, \vec{x}) \wedge C(\vec{x}).
\]  
\end{proof}
\begin{lem}\label{t2-22}(Disjunction Application)
Let $C(\vec{x}) \in \Pi$ be a formula. If $A(\vec{x}) \rhd B(\vec{x}) $ then $A(\vec{x}) \vee C(\vec{x}) \rhd B(\vec{x}) \vee C(\vec{x}) $.
\end{lem}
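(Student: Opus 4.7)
The plan is to mirror the proof of Lemma \ref{t2-21} (Conjunction Application), replacing $\wedge$ by $\vee$ throughout. Suppose $A(\vec{x}) \rhd B(\vec{x})$ is witnessed by flow data $(t, H, E_0, E_1, G_0, G_1, F)$ in the deterministic case, or simply $(t, H)$ in the non-deterministic case. I would define $t' = t$ and take $H'(u, \vec{x})$ to be (a representative in $\Pi$ of) the $\mathcal{R}$-disjunction of $H(u, \vec{x})$ and $C(\vec{x})$, which exists and lies in $\Pi$ by the closure of the $\pi$-class under $\mathcal{R}$-disjunction (Definition \ref{t2-5}), respectively by the closure of $\Pi_k(\Phi)$ in the deterministic case. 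The boundary equivalences $H'(0, \vec{x}) \leftrightarrow A(\vec{x}) \vee C(\vec{x})$ and $H'(t(\vec{x}), \vec{x}) \leftrightarrow B(\vec{x}) \vee C(\vec{x})$ then follow from the corresponding equivalences for $H$ and propositional reasoning inside $\mathcal{R} \subseteq \mathcal{B}$.

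For the non-deterministic case the inductive step is immediate: from $\mathcal{B} \vdash H(u, \vec{x}) \rightarrow H(u+1, \vec{x})$ one derives $\mathcal{B} \vdash H(u, \vec{x}) \vee C(\vec{x}) \rightarrow H(u+1, \vec{x}) \vee C(\vec{x})$ by propositional logic, which transfers along the $\mathcal{R}$-equivalence to $H'$. The boundary conditions are handled the same way, so $(t', H')$ is the desired flow.

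For the deterministic case one must lift each term sequence $F(u)$, $E_i$, $G_i$ to a new sequence $F'(u)$, $E'_i$, $G'_i$ that witnesses the reduction $H(u) \vee C \leq_d H(u+1) \vee C$ (and the boundary equivalences). Following the recipe in Lemma \ref{t2-21}, the new sequences act as $F(u)$, $E_i$, $G_i$ on the quantifier blocks inherited from $H$, and as identity witnesses on the quantifier blocks inherited from $C$; that $C$ reduces to itself by identity is the base case after unfolding Definition \ref{t2-12}. The main obstacle I anticipate is purely bookkeeping: one has to fix the canonical presentation of the $\mathcal{R}$-disjunction of two $\Pi_k(\Phi)$ formulas so that the outer quantifier blocks of $H(u) \vee C$ and $H(u+1) \vee C$ align compatibly with those of $A \vee C$ and $B \vee C$, ensuring the identity witnesses on the $C$-side and the original reductions on the $H$-side can be concatenated into a single well-typed sequence of terms. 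Because the closure under $\mathcal{R}$-disjunction only requires $\mathcal{R}$-provable equivalence, this alignment can be arranged once and for all, and the verification reduces to the already-established reductions for $H$ combined with trivial identity reductions for $C$.
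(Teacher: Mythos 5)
Your proposal takes essentially the same route as the paper's proof: set $t'=t$, $H'(u,\vec x)=H(u,\vec x)\vee C(\vec x)$ (up to $\mathcal R$-equivalence so that $H'$ stays in $\Pi$), use propositional reasoning for the non-deterministic inductive step, and in the deterministic case extend the term sequences by acting as the original witnesses on the $H$-blocks and as identities on the $C$-blocks. The extra remarks about choosing a canonical $\mathcal R$-disjunction representative and aligning quantifier blocks are just a more explicit statement of the bookkeeping the paper glosses over, not a different argument.
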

\begin{proof}
For the deterministic case, since $A(\vec{x}) \rhd B(\vec{x}) $ then by Definition \ref{t2-19}, there exists a term $t(\vec{x})$, a formula $H(u, \vec{x}) \in \Pi_k(\Phi)$ and sequences of terms $E_0$, $E_1$, $G_0$, $G_1$ and $F(u)$ such that the conditions in the Definition \ref{t2-19} is provable in $\mathcal{B}$. Now define $t'=t$, $H'(u, \vec{x})=H(u, \vec{x}) \vee C(\vec{x})$ and $E'_0$, $E'_1$, $G'_0$, $G'_1$ and $F'(u)$ as the corresponding sequences of terms extending their counterparts by using the quantifiers in $C$ to witness themselves by the identity terms. It is clear that the new data is a deterministic $(\Pi_k(\Phi), \mathcal{B})$-flow from $A(\vec{x}) \vee C(\vec{x})$ to $B(\vec{x}) \vee C(\vec{x})$. \\

For the non-deterministic case do the same, without the sequences of the terms and use the fact that if 
\[
\mathcal{B} \vdash H(u, \vec{x}) \rightarrow H(u+1, \vec{x})
\]
then, 
\[
\mathcal{B} \vdash H(u, \vec{x}) \vee C(\vec{x}) \rightarrow H(u+1, \vec{x}) \vee C(\vec{x}).
\]   
\end{proof}
\begin{lem}\label{t2-24}
\begin{itemize}
\item[$(i)$](Weak Gluing)
If $A(\vec{x}) \rhd B(\vec{x}) $ and $ B(\vec{x}) \rhd C(\vec{x})$ then $A(\vec{x}) \rhd C(\vec{x})$.
\item[$(ii)$](Strong Gluing)
If $s \in \mathbb{T}$ and $ A(y, \vec{x}) \rhd A(y+1, \vec{x})$ then $ A(0, \vec{x}) \rhd  A(s, \vec{x})$.
\end{itemize}
\end{lem}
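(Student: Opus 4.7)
\textbf{Plan for Lemma \ref{t2-24}.}

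For \textbf{(i) Weak Gluing}, the plan is straightforward concatenation. Given flows $(t_1(\vec{x}), H_1(u,\vec{x}))$ witnessing $A \rhd B$ and $(t_2(\vec{x}), H_2(u,\vec{x}))$ witnessing $B \rhd C$, I would set the new length to $t(\vec{x}) := t_1(\vec{x}) + t_2(\vec{x})$, which lies in $\mathbb{T}$ because $\mathbb{T}$ is closed under the basic operation $+$, and define the carrier
\[
H(u,\vec{x}) \;:=\; \bigl(u \leq t_1(\vec{x}) \wedge H_1(u,\vec{x})\bigr) \,\vee\, \bigl(u > t_1(\vec{x}) \wedge H_2(u - t_1(\vec{x}),\vec{x})\bigr).
\]
This lives in $\Pi$ by the closure of a $\pi$-class under substitution, $\mathcal{R}$-conjunction and $\mathcal{R}$-disjunction (and analogously for $\Pi_k(\Phi)$ once $\Phi$ is booleanly closed). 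Conditions (i) and (ii) of Definition \ref{t2-18} are immediate from the corresponding clauses for $H_1$ and $H_2$. For the progression condition, I would split into the cases $u < t_1$, $u = t_1$, and $t_1 < u < t_1+t_2$: the first and third are inherited from $H_1$ and $H_2$ respectively, while at the seam $u = t_1$ I use $H_1(t_1,\vec{x}) \leftrightarrow B(\vec{x}) \leftrightarrow H_2(0,\vec{x})$ composed with $H_2(0,\vec{x}) \to H_2(1,\vec{x})$. In the deterministic setting, the new sequence of reduction terms is obtained by inheriting $F^1(u)$ for $u < t_1$ and $F^2(u - t_1)$ for $u > t_1$, and by composing the equivalence data $G_0^1, G_1^1, E_0^2, E_1^2$ with $F^2(0)$ to produce a deterministic reduction at the seam.

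For \textbf{(ii) Strong Gluing}, the strategy is to iterate the one-step flow $s$ times, but uniformizing the per-step length first. Let $(t(y,\vec{x}), H(u,y,\vec{x}))$ be the given flow from $A(y,\vec{x})$ to $A(y{+}1,\vec{x})$. Because $\mathbb{T}$ is a $\mathcal{B}$-term ideal, Definition \ref{t2-2} furnishes a monotone majorizing term $r(y,\vec{x}) \geq t(y,\vec{x})$; set $T(\vec{x}) := r(s,\vec{x})$, so $t(y,\vec{x}) \leq T(\vec{x})$ for every $y \leq s$ by monotonicity. I would first \emph{pad} each step to uniform length $T(\vec{x})$ by
\[
H'(u,y,\vec{x}) \;:=\; \bigl(u < t(y,\vec{x}) \wedge H(u,y,\vec{x})\bigr) \,\vee\, \bigl(u \geq t(y,\vec{x}) \wedge A(y{+}1,\vec{x})\bigr),
\]
which is still in $\Pi$ and yields a flow from $A(y)$ to $A(y{+}1)$ of length $T(\vec{x})$ with identity padding after step $t(y,\vec{x})$. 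Then set the total length to $s \cdot T(\vec{x}) \in \mathbb{T}$ (closure of $\mathbb{T}$ under $\cdot$) and, using the divison/remainder terms available in $\mathcal{L}_{\mathcal{R}}$, define
\[
\widehat{H}(v,\vec{x}) \;:=\; H'\!\bigl(v - T(\vec{x})\cdot \lfloor v / T(\vec{x}) \rfloor,\; \lfloor v / T(\vec{x}) \rfloor,\; \vec{x}\bigr),
\]
with the convention that at $v = s \cdot T(\vec{x})$ the quotient is $s$ and $\widehat{H}$ evaluates to $A(s,\vec{x})$. The three flow conditions reduce, inside each block $[yT, (y{+}1)T)$, to the padded transitions of $H'$, and the only novelty is the transition at block boundaries $v = (y{+}1)T-1 \to (y{+}1)T$, which holds because $H'(T{-}1,y,\vec{x}) \to A(y{+}1,\vec{x}) \leftrightarrow H'(0,y{+}1,\vec{x})$ by the padding. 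In the deterministic case, the reduction terms within a block are inherited from $F(u)$ (or are identity reductions in the padded region), and the seam between blocks uses the equivalences $H'(T,y,\vec{x}) \equiv_d A(y{+}1,\vec{x}) \equiv_d H'(0,y{+}1,\vec{x})$.

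The main obstacle I anticipate is bookkeeping rather than conceptual: ensuring that the substituted formulas $H_2(u - t_1(\vec{x}),\vec{x})$ and $H'(\cdots, \lfloor v/T \rfloor, \vec{x})$ are still certifiably in the relevant class $\Pi$ (for which I rely on closure under substitution in Definition \ref{t2-5}, using that $-$ and $\lfloor \cdot/\cdot \rfloor$ are built into $\mathcal{L}_{\mathcal{R}}$), and that the provability of the transition conditions in $\mathcal{B}$ at the seams and at the padded/non-padded boundary is uniform in the parameters. Once this is set up, all the provability obligations reduce to the axioms of $\mathcal{R}$ for $+$, $\cdot$, $-$, $\lfloor\cdot/\cdot\rfloor$ together with the transition data supplied by the input flows, so no further arithmetical strength beyond $\mathcal{B}$ is needed.
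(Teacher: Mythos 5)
Your proposal follows the same strategy as the paper's proof: concatenation for weak gluing and iterated copies uniformized via a monotone majorizing term for strong gluing, with the quotient/remainder machinery of $\mathcal{L}_{\mathcal{R}}$ handling the block indexing. The minor presentational differences — you write the new carrier as a single guarded disjunction rather than an informal case split, you drop the paper's extra buffer slot for $B$ at the seam, and you pad with $A(y{+}1,\vec{x})$ directly rather than repeating the terminal $H$ — are all sound; the remaining concern you raise about certifying the substituted formulas stay in $\Pi_k(\Phi)$ is real but is also left implicit in the paper's own proof, so no genuine gap opens up relative to the intended level of rigor.
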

\begin{proof}
For $(i)$ and for the deterministic case, since $A(\vec{x}) \rhd_d B(\vec{x})$ there exists a term $t(\vec{x})$, a formula $H(u, \vec{x}) \in \Pi_k(\Phi)$ and sequences of terms $E_0$, $E_1$, $G_0$, $G_1$ and $F(u)$ such that $\mathcal{B}$ proves the conditions in the Definition \ref{t2-19}. On the other hand since $B(\vec{x}) \rhd_d C(\vec{x})$ we have the corresponding data for $B(\vec{x})$ to $C(\vec{x})$ which we show by $t'(\vec{x})$, $H'(u, \vec{x})$, $E'_0$, $E'_1$, $G'_0$, $G'_1$ and $F'(u)$. Define $s(\vec{x})=t(\vec{x})+t'(\vec{x})+1$, 
\[
I(u, \vec{x})=
\begin{cases}
H(u, \vec{x}) & u \leq t(\vec{x})\\
B(\vec{x}) & u= t(\vec{x})+1\\
H'(u-t(\vec{x})-2, \vec{x}) & t(\vec{x})+1<u \leq t(\vec{x})+t'(\vec{x})+1
\end{cases} 
\]
and the sequence of terms in the same pointwise way. Then, it is easy to check that this new data is a deterministic $(\Pi_k(\Phi), \mathcal{B})$-flow from $A(\vec{x})$ to $C(\vec{x})$. For the non-deterministic case do the same without the sequences of the terms and notice that since $\mathbb{T}$ is closed under successor and addition and $t, t' \in \mathbb{T}$, we have $s \in \mathbb{T}$. \\

For $(ii)$ and for the deterministic case, if we have $A(y, \vec{x}) \rhd_d A(y+1, \vec{x})$ it is enough to glue all copies of the sequences of reductions for $0 \leq y \leq s$, to have $A(0, \vec{x}) \rhd_d A(s, \vec{x})$. More precisely, assume that all reductions have the same length $t'(\vec{x})$ greater than $t(s, \vec{x})$. This is an immediate consequence of the facts that we can find a monotone majorization for $t(y, \vec{x})$ like $r(y, \vec{x})$, and since $y \leq s$ we have $t(y, \vec{x}) \leq r(y, \vec{x}) \leq r(s, \vec{x})$. Now it is enough to repeat the last formula in the flow to make the flow longer to reach the length $t'(\vec{x}, \vec{z})=r(s, \vec{x})$ where $\vec{z}$ is a vector of variables in $s$. Now, define $t''(\vec{x}, \vec{z})=s \times (t'(\vec{x})+2)$,
\[
I(u, \vec{x})=
\begin{cases}
H(u, y, \vec{x}) & y(t'+2) < u < (y+1)(t'+2)\\
A(y, \vec{x}) & u= y(t'+2)\\
\end{cases} 
\]
and
\[
F(u)=
\begin{cases}
F(u, y) & y(t'+2) < u < (y+1)(t'+2)-1\\
E_0(u, y) & u=y(t'+2)\\
G_1(u, y+1) & u= (y+1)(t'+2)-1\\
\end{cases} 
\]
and $E'_0=E'_1=G'_0=G'_1=id$. It is easy to see that this new sequence is a deterministic $(\Pi_k(\Phi), \mathcal{B})$-flow from $A(0, \vec{x})$ to $A(s, \vec{x})$.\\
For the non-deterministic case, notice that $\mathbb{T}$ is closed under substitution, sum and product and therefore, $t'' \in \mathbb{T}$ which makes $(t'', I)$ a non-deterministic $(\mathbb{T}, \Pi, \mathcal{B})$-flow from $A(0, \vec{x})$ to $A(s, \vec{x})$.
\end{proof}
\begin{lem}\label{t2-23}(Conjunction and Disjunction Rules)
\begin{itemize}
\item[$(i)$]
If $\Gamma, A \rhd \Delta$ or $\Gamma, B \rhd \Delta$ then $\Gamma, A \wedge B \rhd \Delta$.
\item[$(ii)$]
If $\Gamma_0 \rhd \Delta_0, A $ and $\Gamma_1 \rhd \Delta_1, B$ then $\Gamma_0, \Gamma_1 \rhd \Delta_0, \Delta_1, A \wedge B$.
\item[$(iii)$]
If $\Gamma \rhd \Delta, A$ or $\Gamma \rhd \Delta, B$ then $\Gamma \rhd \Delta, A \vee B$.
\item[$(iv)$]
If $\Gamma_0, A \rhd \Delta_0$ and $\Gamma_1, B \rhd \Delta_1$ then $\Gamma_0, \Gamma_1, A \vee B \rhd \Delta_0, \Delta_1$.
\end{itemize}
\end{lem}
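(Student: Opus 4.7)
The plan is to derive all four rules by combining three prior tools---Conjunction Application (Lemma~\ref{t2-21}), Disjunction Application (Lemma~\ref{t2-22}), and Weak Gluing (Lemma~\ref{t2-24}$(i)$)---with the elementary observation that any implication provable in $\mathcal{B}$ between two formulas in $\Pi$ already yields a one-step flow. Concretely, if $\mathcal{B} \vdash A(\vec{x}) \rightarrow B(\vec{x})$ with $A, B \in \Pi$, take $t(\vec{x}) = 1$ and $H(u, \vec{x}) := (u = 0 \wedge A(\vec{x})) \vee (u \geq 1 \wedge B(\vec{x}))$; the three clauses of Definition~\ref{t2-18} are immediate, and $H$ lies in $\Pi$ by the closure properties of a $\pi$-class. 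For the deterministic setting the same principle works with identity terms in every component, reducing to the base clause of Definition~\ref{t2-12}. I will call such flows \emph{trivial} and use them freely as the glue between genuinely informative reductions.

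Parts $(i)$ and $(iii)$ are then immediate. For $(i)$, assuming $\Gamma, A \rhd \Delta$, the projection $\bigwedge \Gamma \wedge (A \wedge B) \rightarrow \bigwedge \Gamma \wedge A$ is trivial, so Weak Gluing yields $\Gamma, A \wedge B \rhd \Delta$ (the case $\Gamma, B \rhd \Delta$ is symmetric). For $(iii)$, given $\Gamma \rhd \Delta, A$ the implication $\bigvee \Delta \vee A \rightarrow \bigvee \Delta \vee (A \vee B)$ is trivial, and one further application of Weak Gluing closes the argument.

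Parts $(ii)$ and $(iv)$ require more care, essentially distributing $\wedge$ over $\vee$. For $(ii)$, I first apply Conjunction Application to the first hypothesis with $C = \bigwedge \Gamma_1$, obtaining a flow from $\bigwedge \Gamma_0 \wedge \bigwedge \Gamma_1$ to $(\bigvee \Delta_0 \vee A) \wedge \bigwedge \Gamma_1$. Since this target provably implies $\bigvee \Delta_0 \vee (A \wedge \bigwedge \Gamma_1)$, a trivial step plus Weak Gluing gives $\Gamma_0, \Gamma_1 \rhd \Delta_0, A \wedge \bigwedge \Gamma_1$. Next, Conjunction Application on the second hypothesis with $C = A$ produces $A \wedge \bigwedge \Gamma_1 \rhd A \wedge (\bigvee \Delta_1 \vee B)$, which in turn reduces trivially to $\Delta_1, A \wedge B$; Disjunction Application with $C = \bigvee \Delta_0$ then lifts this sub-flow into the first disjunct, and a final gluing delivers $\Gamma_0, \Gamma_1 \rhd \Delta_0, \Delta_1, A \wedge B$. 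Part $(iv)$ is dual: the trivial equivalence $\bigwedge \Gamma_0 \wedge \bigwedge \Gamma_1 \wedge (A \vee B) \leftrightarrow (\bigwedge \Gamma_0 \wedge A \wedge \bigwedge \Gamma_1) \vee (\bigwedge \Gamma_0 \wedge B \wedge \bigwedge \Gamma_1)$ splits the source, each disjunct is contracted using one hypothesis extended (by Conjunction Application) with the other side's context, and Disjunction Application glues the pieces onto the common target $\bigvee \Delta_0 \vee \bigvee \Delta_1$.

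The hard part will be the bookkeeping in the deterministic case. Every Conjunction/Disjunction Application, every trivial distribution, and every Weak Gluing step obliges one to exhibit the witnessing sequences $E_0, E_1, G_0, G_1, F(u)$ of Definition~\ref{t2-19}, and to verify that the intermediate $H$-formulas stay in $\Pi_k(\Phi)$ and that all step-lengths remain inside the term ideal $\mathbb{T}$. Fortunately, none of the distributions or weakenings invoked above introduces any genuinely new witnessing content---only identity terms on the surviving bounded quantifiers and projections for the dropped subformulas---so the construction amounts to a careful but mechanical extension of the sequences already produced by the hypotheses, and the class-closure obligations reduce to the properties already built into Definitions~\ref{t2-5} and \ref{t2-6}.
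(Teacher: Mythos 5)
Your proof follows essentially the same route as the paper: derive all four rules from Conjunction Application, Disjunction Application, and Weak Gluing, interleaved with single ``obvious'' reduction steps for the elementary distributive laws. The paper organizes the distribution slightly differently in $(ii)$---it keeps $(\bigvee\Delta_0\vee A)$ as a conjunct until the very end and absorbs it in one $\leq_d$ step, whereas you distribute eagerly at each stage---but the combinatorics are the same.

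The one place you should be careful is the blanket ``trivial flow'' principle you lay out at the start. For the non-deterministic side it is correct: a $\mathcal{B}$-provable implication between $\Pi$-formulas gives a one-step non-deterministic flow by Definition~\ref{t2-13}. But the deterministic analogue you assert---``the same principle works with identity terms in every component, reducing to the base clause of Definition~\ref{t2-12}''---is false as a general claim. The base clause of Definition~\ref{t2-12} applies only when both formulas are in $\Pi_0(\Phi)$; for $k>0$, a provable implication does \emph{not} automatically produce a deterministic reduction, and Example~\ref{t2-16} is precisely a counterexample: the implication there is $\mathcal{B}$-provable (even true in $\mathbb{N}$) yet admits no deterministic reduction unless $\mathbf{P}=\mathbf{NP}$. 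What saves your argument is that the particular implications you invoke---distributivity of $\wedge$ over $\vee$, dropping a conjunct, adding a disjunct---happen to have obvious deterministic reductions because the surviving bounded quantifiers on the two sides correspond positionally and can be witnessed by identity terms, which you do acknowledge in your closing paragraph. But the correct framing is that you must exhibit, for each such step, a deterministic reduction $\leq_d$ with explicit witnessing terms (as the paper does when it writes a single $\leq_d$), not appeal to provability and the $\Pi_0$ base clause. Stated as a general lemma, your ``trivial deterministic flow'' principle would be wrong; applied only to the concrete structural implications here, it is harmless but under-justified.
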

\begin{proof}
$(i)$ and $(iii)$, for both deterministic and non-deterministic cases, are trivial. For $(ii)$, in the deterministic case, if $\Gamma_0 \rhd \Delta_0, A $, then by conjunction application with $\bigwedge \Gamma_1$ we have $\bigwedge \Gamma_0 \wedge \bigwedge \Gamma_1 \rhd (\bigvee \Delta_0 \vee A) \wedge \bigwedge \Gamma_1$. Moreover, we have $\bigwedge \Gamma_1 \rhd \bigvee \Delta_1 \vee B$ and again by conjunction application $\bigwedge \Gamma_1 \wedge (\bigvee \Delta_0 \vee A) \rhd (\bigvee \Delta_1 \vee B) \wedge (\bigvee \Delta_0 \vee A) $. Therefore by weak gluing
\[
\bigwedge \Gamma_0 \wedge \bigwedge \Gamma_1 \rhd (\bigvee \Delta_1 \vee B) \wedge (\bigvee \Delta_0 \vee A) .
\]
But it is easy to see that 
\[
(\bigvee \Delta_1 \vee B) \wedge (\bigvee \Delta_0 \vee A)  \leq_d \bigvee \Delta_1 \vee \bigvee \Delta_0 \vee (A \wedge B).
\]
Hence
\[
\Gamma_0, \Gamma_1 \rhd \Delta_0, \Delta_1, (A \wedge B).
\]

For $(iv)$, if $\Gamma_0, A \rhd \Delta_0 $ then by disjunction application with $\bigwedge \Gamma_1 \wedge B$ we have 
\[
(\bigwedge \Gamma_0 \wedge A) \vee (\bigwedge \Gamma_1 \wedge B) \rhd \bigvee \Delta_0 \vee (\bigwedge \Gamma_1 \wedge B).
\]
Moreover, we have $\bigwedge \Gamma_1 \wedge B \rhd \bigvee \Delta_1 $, hence again by disjunction application 
\[
(\bigwedge \Gamma_1 \wedge B) \vee \bigvee \Delta_0 \rhd \bigvee \Delta_0 \vee \bigvee \Delta_1.
\]
Hence, by weak gluing,
\[
(\bigwedge \Gamma_0 \wedge A) \vee (\bigwedge \Gamma_1 \wedge B) \rhd \bigvee \Delta_0 \vee \bigvee \Delta_1.
\]
However, it is clear that 
\[
\bigwedge \Gamma_0 \wedge \bigwedge \Gamma_1 \wedge (A \vee B) \leq_d (\bigwedge \Gamma_0 \wedge A) \vee (\bigwedge \Gamma_1 \wedge B) .
\]
Hence,
\[
\Gamma_0, \Gamma_1, (A \vee B) \rhd \Delta_0, \Delta_1.
\]
\end{proof}
The following lemma makes it possible to compute a characteristic function of any $A \in \Psi_k \in \{\Pi_k(\Phi), \Sigma_k(\Phi)\}$ by a $\Sigma_{k+1}(\Phi)$ deterministic flow of reductions. This is a very important tool to reduce the complexity of deciding a complex formula to just deciding one equality. We will see its use in full force in the case of handling the contraction rule.
\begin{lem}\label{t2-25}(Computability of Characteristic Functions) \\Suppose $\{\Sigma_k(\Phi), \Pi_k(\Phi)\}_{k=0}^{\infty}$ is a hierarchy and $\mathcal{B}$ has characteristic terms for all $\phi \in \Phi$, then for any $\Psi \in \{\Pi_k(\Phi), \Sigma_k(\Phi)\}$ if $A(\vec{x}) \in \Psi$ then
\[
\rhd_d^{(\Sigma_{k+1}, \mathcal{B})} \; \exists i \leq 1 \; [(i=0 \rightarrow A) \wedge (i=1 \rightarrow \neg A)].
\]
\end{lem}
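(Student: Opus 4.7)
The plan is to induct on the complexity $k$: at each inductive step, I build a flow that sequentially scans the outermost bounded quantifier of $A$, invoking the inductive hypothesis at each point to decide the next subformula.

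For the base case $k=0$, $A\in\Phi$ and the hypothesis supplies a characteristic term $\chi_A(\vec{x})$ with $\mathcal{B}\vdash (\chi_A=0\leftrightarrow A)\wedge(\chi_A=1\leftrightarrow \neg A)$. Taking a length-zero flow with $H(0,\vec{x})\equiv \exists i\le 1[(i=0\to A)\wedge (i=1\to \neg A)]$, the witness $i:=\chi_A(\vec{x})$ furnishes the required deterministic reduction from $\top$ to $H(0,\vec{x})$.

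For the inductive step, assume the statement for all formulas of complexity at most $k$, and consider $A\in\Psi_{k+1}$. I treat the representative case $A=\forall y\le t(\vec{x})\,B(\vec{x},y)$ with $B$ of strictly smaller complexity; the dual case $A=\exists y\le t(\vec{x})\,B(\vec{x},y)$ is symmetric with the roles of $0/1$ swapped. Introduce the running state
\[
H(u,\vec{x}) \;:=\; \exists i\le 1\,\bigl[(i=0 \to \forall y\le u\, B(\vec{x},y))\,\wedge\,(i=1 \to \exists y\le u\, \neg B(\vec{x},y))\bigr],
\]
which, modulo $\mathcal{R}$-equivalences, lies in $\Sigma_{k+1}(\Phi)$. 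The initial state $H(0,\vec{x})$ reduces to the characteristic-function conclusion for the single atom $B(\vec{x},0)$, available by the IH. The final state $H(t(\vec{x}),\vec{x})$ coincides $\mathcal{R}$-equivalently with the target. For the step $H(u,\vec{x})\rhd_d H(u+1,\vec{x})$, I apply the IH to $B(\vec{x},u+1)$ to obtain a bit $j\le 1$ deciding that atom; a constant-depth case analysis on $(i,j)$ then updates the new bit (and, when flipping to $i=1$, records $y:=u+1$ as the fresh counterexample). Strong gluing (Lemma~\ref{t2-24}(ii)) chains $t(\vec{x})$ copies of this step, and weak gluing attaches the initialization flow, producing the desired deterministic $(\Sigma_{k+1},\mathcal{B})$-flow.

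The main obstacle is twofold. First, one must verify that $H(u,\vec{x})$ genuinely lies in $\Sigma_{k+1}(\Phi)$; this relies on the $\mathcal{R}$-conjunction, $\mathcal{R}$-disjunction and $\mathcal{R}$-negation closure properties of a $\sigma$-class (Definition~\ref{t2-5}) together with the hierarchy clauses of Definition~\ref{t2-6}, which were designed precisely to support such manipulations. Second, the step reduction $H(u)\rhd_d H(u+1)$ has to be assembled by feeding the IH-flow — which goes from $\top$ to the characteristic formula for $B(\vec{x},u+1)$ — into a reduction starting at the nontrivial antecedent $H(u)$; this is handled by conjunction application (Lemma~\ref{t2-21}) followed by the routine propositional reduction implementing the $(i,j)$-case split, and it is exactly the place where the non-deterministic construction of the characteristic bit at each level is turned into a deterministic step at the next level.
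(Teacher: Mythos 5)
Your proposal matches the paper's argument essentially step for step: induct on the number of bounded prefix quantifiers, treat the universal case with the running state $H(u,\vec{x})$ that carries a decision bit plus a counterexample search, decide the fresh atom $B(\vec{x},u+1)$ by the inductive hypothesis, assemble the step $H(u)\rhd_d H(u+1)$ by conjunction application and a case split on the two bits, and finish with strong and weak gluing from $H(0)$ to $H(t(\vec{x}))$. The only cosmetic divergences are the $0/1$ labeling convention and your explicit remark that $H(u,\vec{x})\in\Sigma_{k+1}(\Phi)$ needs checking, which the paper leaves implicit.
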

\begin{proof}
We prove the theorem by using induction on the number of bounded prefix quantifiers of $A$. If $A \in \Pi_0(\Phi)$, then there is nothing to prove because it is enough to put $i=\chi_A$ which belongs to the terms of $\mathcal{B}$ by the assumption. If $A=\forall z \leq t(\vec{x}) B(z, \vec{x})$, then by IH we have 
\[
\rhd_d^{(\Sigma_{k+1}, \mathcal{B})} \; \exists r \leq 1 \; [(r=1 \rightarrow B(\vec{x}, u+1)) \wedge (r=0 \rightarrow \neg B(\vec{x}, u+1))].
\]
Now, we want to prove that there exists a reduction from the conjunction of
\[
G(u+1)=\exists k \leq 1 \; [(k=1 \rightarrow B(\vec{x}, u+1)) \wedge (k=0 \rightarrow \neg B(\vec{x}, u+1))] 
\]
and
\[
H(u)=\exists i \leq 1 \; [(i=1 \rightarrow \forall z \leq u \; B(\vec{x}, z)) \wedge (i=0 \rightarrow \exists z \leq u \; \neg B(\vec{x}, z) )] 
\]
to
\[
H(u+1)=\exists j \leq 1 \; [(j=1 \rightarrow \forall z \leq u+1 \; B(\vec{x}, z)) \wedge (j=0 \rightarrow \exists z \leq u+1 \; \neg B(\vec{x}, z) )] .
\] 
Witness $j$ as the following:
\[
j=
\begin{cases}
1 & i=k=1 \\
0 & o.w. \\
\end{cases} 
\]
Then for the other quantifiers use the following scheme: If $i=k=1$, then witness $\forall z \leq u+1 \; B(\vec{x}, z)$ by $\forall z \leq u \; B(\vec{x}, z)$ and $B(\vec{x}, u+1)$. If $i=1$ and $k=0$, then to witness $\exists z \leq u+1 \; \neg B(\vec{x}, z)$ use $\neg B(\vec{x}, u+1)$ and finally if $i=k=0$, then use $\exists z \leq u \; \neg B(\vec{x}, z)$ to witness $\exists z \leq u+1 \; \neg B(\vec{x}, z)$.\\
Therefore $G(u+1) \wedge H(u) \rhd_d H(u+1)$. By IH, $\rhd_d G(u+1)$. Hence, by conjunction application $H(u) \rhd_d G(u+1) \wedge H(u)$ and then by gluing $H(u) \rhd_d H(u+1)$ and finally by strong gluing $H(0) \rhd_d H(t(\vec{x}))$. Since $H(0) \equiv_d G(0)$ and $\rhd_d G(0)$, hence $\rhd_d H(0)$ which means $\rhd_d H(t(\vec{x}))$. 
\end{proof}
\begin{lem}\label{t2-26}(Negation Rules) If $\Gamma, \Delta \subseteq \Pi_{k+1}$ and $A \in \Pi_k \cup \Sigma_k$ then
\begin{itemize}
\item[$(i)$]
If $\Gamma, A \rhd \Delta$ then $\Gamma \rhd \Delta, \neg A$.
\item[$(ii)$]
If $\Gamma \rhd \Delta, A$ then $\Gamma, \neg A \rhd \Delta$.
\end{itemize}
\end{lem}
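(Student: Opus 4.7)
The plan is to handle both parts uniformly, with part (ii) being a direct application of the calculus of flows from Lemmas~\ref{t2-21}--\ref{t2-24}, while part (i) additionally invokes Lemma~\ref{t2-25} to carry out a constructive case split on $A$.

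I would dispose of part (ii) first, since it is the shorter. From the hypothesis $\bigwedge \Gamma \rhd \bigvee \Delta \vee A$, conjunction application (Lemma~\ref{t2-21}) with $\neg A$ yields
\[
\bigwedge \Gamma \wedge \neg A \;\rhd\; (\bigvee \Delta \vee A) \wedge \neg A.
\]
Because $A \wedge \neg A$ is refutable already in $\mathcal{R}$, a trivial deterministic reduction collapses the right-hand side to $\bigvee \Delta$, and weak gluing (Lemma~\ref{t2-24}(i)) produces the desired $\Gamma, \neg A \rhd \Delta$. The same argument applies verbatim to both deterministic and non-deterministic flows.

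For part (i), the plan is a four-link chain concatenated by weak gluing. First, Lemma~\ref{t2-25} applied to $A \in \Pi_k \cup \Sigma_k$ gives a deterministic flow terminating in
\[
\Theta(\vec{x}) \;:=\; \exists i \leq 1 \, [(i = 0 \to A(\vec{x})) \wedge (i = 1 \to \neg A(\vec{x}))],
\]
so conjunction application lifts this to $\bigwedge \Gamma \rhd \bigwedge \Gamma \wedge \Theta$. Second, a deterministic distribution step uses the witness $i$ from $\Theta$ to direct the output into the appropriate disjunct, giving $\bigwedge \Gamma \wedge \Theta \leq_d (\bigwedge \Gamma \wedge A) \vee (\bigwedge \Gamma \wedge \neg A)$. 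Third, from the hypothesis $\bigwedge \Gamma \wedge A \rhd \bigvee \Delta$, disjunction application (Lemma~\ref{t2-22}) with $\bigwedge \Gamma \wedge \neg A$ as companion produces $(\bigwedge \Gamma \wedge A) \vee (\bigwedge \Gamma \wedge \neg A) \rhd \bigvee \Delta \vee (\bigwedge \Gamma \wedge \neg A)$. Fourth, a deterministic weakening reduces $\bigwedge \Gamma \wedge \neg A$ to $\neg A$ in the final disjunct. Concatenating the four links by Lemma~\ref{t2-24}(i) yields $\Gamma \rhd \Delta, \neg A$. The non-deterministic case is shorter still: the classical tautology $\bigwedge \Gamma \to (\bigwedge \Gamma \wedge A) \vee \neg A$ is provable in $\mathcal{B}$ directly, making Lemma~\ref{t2-25} unnecessary and letting a single provable step be prepended to the hypothesis flow.

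The main obstacle I anticipate is tracking complexity classes. Lemma~\ref{t2-25} formally constructs its flow in $\Sigma_{k+1}(\Phi)$, whereas we are building a $\Pi_{k+1}(\Phi)$-flow. Under the hypothesis $A \in \Pi_k \cup \Sigma_k$, both $A$ and $\neg A$ lie at level $k$, so the classical equivalence $\Theta \leftrightarrow A \vee \neg A$ combined with the closure properties of Definition~\ref{t2-5} should allow all intermediate formulas to be rewritten within $\Pi_{k+1}(\Phi)$; nonetheless, a careful class-membership verification at each link of the chain, especially after forming conjunctions and disjunctions with the elements of $\Gamma, \Delta \subseteq \Pi_{k+1}$, is the only genuinely delicate point, with the rest being mechanical application of the flow calculus.
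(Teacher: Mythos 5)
Your proposal and the paper's proof take essentially the same route: both first handle part (ii) by conjunction application with $\neg A$ followed by collapsing $A \wedge \neg A$, and both handle part (i) by invoking Lemma~\ref{t2-25} in the deterministic case to carry out a case split on $A$ vs.\ $\neg A$, then chaining with the hypothesis flow via conjunction/disjunction application and weak gluing. The organizational difference is cosmetic: the paper factors out the two auxiliary claims $\top \rhd A \vee \neg A$ and $A \wedge \neg A \rhd \bot$ and proves their sufficiency first, while you chain the links directly, using $(\bigwedge\Gamma \wedge A) \vee (\bigwedge\Gamma \wedge \neg A)$ where the paper uses $(\bigwedge\Gamma \wedge A) \vee \neg A$.

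The one place where your proposal is weaker than it should be is exactly the point you flag at the end. You suggest that the $\Sigma_{k+1}$ vs.\ $\Pi_{k+1}$ mismatch from Lemma~\ref{t2-25} ``should'' resolve by rewriting intermediate formulas in $\Pi_{k+1}$ form, but this does not work directly: the intermediate formulas $H(u) = \exists i \leq 1\,[(i{=}1 \to \forall z \leq u\, B) \wedge (i{=}0 \to \exists z \leq u\, \neg B)]$ of the flow in Lemma~\ref{t2-25} carry a leading bounded existential over a level-$k$ matrix that is not itself a $\Sigma_{k-1}$-formula, so they are genuinely $\Sigma_{k+1}$ and not rewritable as $\Pi_{k+1}$ by the closure conditions of Definition~\ref{t2-5}. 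The paper's mechanism is the dual (negation) trick: a $\Sigma_{k+1}$-flow $\top \rhd A \vee \neg A$, read backwards with every formula negated, is a $\Pi_{k+1}$-flow $A \wedge \neg A \rhd \bot$, and the one-step $\Pi_{k+1}$-reduction $A \wedge \neg A \leq_d \bot$ (obtained by cross-witnessing the quantifiers of $A$ against those of $\neg A$) supplies the other direction. Your reduction for part (ii) is also slightly compressed: collapsing $(\bigvee\Delta \vee A) \wedge \neg A$ to $\bigvee\Delta$ is not a single trivial step but a short chain passing through $\bigvee\Delta \vee (A \wedge \neg A)$ and then using $A \wedge \neg A \rhd \bot$ under disjunction application. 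These are gaps in exposition rather than in idea; with the negation trick spelled out, the proposal matches the paper's argument.
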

\begin{proof}
Since we have conjunction and disjunction application, it is enough to prove that 
\begin{itemize}
\item[$(i)$]
$ \top \rhd^{\Pi_{k+1}} A(\vec{x}) \vee \neg A(\vec{x}) $.
\item[$(ii)$]
$A(\vec{x}) \wedge \neg A(\vec{x}) \rhd^{\Pi_{k+1}} \bot$.
\end{itemize}
The reason for this sufficiency is the following:\\

For $(i)$, if we have $\Gamma, A \rhd \Delta$ then $\bigwedge \Gamma \wedge A \rhd \bigvee \Delta$, hence by disjunction application we have 
$(\bigwedge \Gamma \wedge A) \vee \neg A \rhd \bigvee \Delta \vee \neg A$. By the claim we have $\rhd A \vee \neg A$, therefore by conjunction application $\bigwedge \Gamma \rhd \bigwedge \Gamma \wedge (A \vee \neg A)$. But, it is easy to see that $\Gamma \wedge (A \vee \neg A) \rhd (\bigwedge \Gamma \wedge A) \vee \neg A$. Hence by gluing we have $\bigwedge \Gamma \rhd \bigvee \Delta \vee \neg A$.\\

For $(ii)$, we have $\bigwedge \Gamma \rhd \bigvee \Delta \vee A$. By conjunction application $\bigwedge \Gamma \wedge \neg A \rhd (\bigvee \Delta \vee A) \wedge \neg A$. By the claim we have $ A \wedge \neg A \rhd \bot$ therefore by disjunction application $\bigvee \Delta \vee (A \wedge \neg A) \rhd \bigvee \Delta $. But, it is clear that $(\bigvee \Delta \vee A) \wedge \neg A \rhd \bigvee \Delta \vee (A \wedge \neg A)$. Hence by gluing, $\bigwedge \Gamma \wedge \neg A \rhd \bigvee \Delta$.\\

Now, we will prove the claim. For the non-deterministic case, the claim is trivial because we have $\mathcal{B} \vdash \top \rightarrow A(\vec{x}) \vee \neg A(\vec{x}) $ and $\mathcal{B} \vdash A(\vec{x}) \wedge \neg A(\vec{x}) \rightarrow \bot$.\\

For the deterministic case we will prove the existence of a $\Sigma_{k+1}$-flow. Then the claim will be clear using negation on all the elements of the flow. For $(i)$, notice that 
\[
\exists i \leq 1 \; [(i=0 \rightarrow A) \wedge (i=1 \rightarrow \neg A)] \leq_d A \vee \neg A.
\]
It is enough to witness $A$ and $\neg A$ in both sides with themselves. But since 
\[
\rhd_d^{(\Sigma_{k+1}, \mathcal{B})} \; \exists i \leq 1 \; [(i=0 \rightarrow A) \wedge (i=1 \rightarrow \neg A)],
\]
we can deduce $\rhd_d^{(\Sigma_{k+1}, \mathcal{B})} A \vee \neg A$.
For the deterministic case of $(ii)$, notice that $A \wedge \neg A \leq_d \bot$ because it is enough to witness the quantifiers of $A$ by $\neg A$ and vice versa.
\end{proof}
As we observed in the Example \ref{t2-16}, the main difference between the deterministic and non-deterministic reductions is the contraction rule that the non-deterministic reduction can handle by definition and the deterministic reduction obviously can not. In the following lemma, we will show that it is possible to simulate the contraction rule by deterministic reductions in the cost of extending one reduction to a sequence of them, i.e., a flow. 
\begin{lem}\label{t2-27}(Structural rules) 
\begin{itemize}
\item[$(i)$]
If $\Gamma, A, B, \Sigma \rhd \Delta$ then $\Gamma, B, A, \Sigma \rhd \Delta$.
\item[$(ii)$]
If $\Gamma \rhd \Delta, A, B, \Sigma$ then $\Gamma \rhd \Delta, A, B, \Sigma$.
\item[$(iv)$]
If $\Gamma \rhd \Delta$ then $\Gamma, A \rhd \Delta$.
\item[$(v)$]
If $\Gamma \rhd \Delta$ then $\Gamma \rhd \Delta, A$.
\item[$(iii)$]
If $\Gamma, A, A \rhd \Delta$ then $\Gamma, A \rhd \Delta$.
\item[$(vi)$]
If $\Gamma \rhd \Delta, A, A$ then $\Gamma \rhd \Delta, A$.
\end{itemize}
\end{lem}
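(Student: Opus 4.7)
The plan is to handle the six items in three groups of increasing difficulty. The exchange rules (i) and (ii) reduce to the $\mathcal{R}$-provable commutativity of $\wedge$ and $\vee$: reordering formulas within $\Gamma$ or $\Delta$ leaves $\bigwedge\Gamma$ and $\bigvee\Delta$ logically equivalent, so every flow for one ordering transports to the other after a cosmetic tweak of the boundary equivalences in Definitions \ref{t2-18} and \ref{t2-19}. The weakening rules (iv) and (v) come from composing the hypothesis with the obvious deterministic reductions $\bigwedge\Gamma\wedge A\leq_{d}\bigwedge\Gamma$ and $\bigvee\Delta\leq_{d}\bigvee\Delta\vee A$ via Weak Gluing (Lemma \ref{t2-24}(i)). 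Left contraction (iii) is similarly quick: the reduction $\bigwedge\Gamma\wedge A\leq_{d}\bigwedge\Gamma\wedge A\wedge A$ is witnessed by simply duplicating the witness for $A$, and Weak Gluing with the hypothesis $\Gamma,A,A\rhd\Delta$ finishes the case.

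The genuinely interesting item is right contraction (vi). For non-deterministic flows the implication $\bigvee\Delta\vee A\vee A\rightarrow\bigvee\Delta\vee A$ is $\mathcal{B}$-provable, so a single appended non-deterministic reduction is enough. The deterministic case is precisely the obstruction singled out in Example \ref{t2-15} and Remark \ref{t2-17}: two independent sets of witnesses for $A$ cannot be merged by one deterministic step, and one must pay by extending to a flow. The plan is to invoke the Characteristic Function Lemma \ref{t2-25} to produce a deterministic $\Sigma_{k+1}$ flow
\[
\rhd_{d}\;\exists i\leq 1\,[(i=0\rightarrow A)\wedge(i=1\rightarrow\neg A)],
\]
then combine it with the hypothesis $\bigwedge\Gamma\rhd_{d}\bigvee\Delta\vee A\vee A$ via Conjunction Application (Lemma \ref{t2-21}) and Weak Gluing to obtain
\[
\bigwedge\Gamma\;\rhd_{d}\;(\bigvee\Delta\vee A\vee A)\wedge\exists i\leq 1\,[(i=0\rightarrow A)\wedge(i=1\rightarrow\neg A)],
\]
and finally supply a single deterministic reduction whose target is $\bigvee\Delta\vee A$.

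That last single-step reduction is where the real work will sit, and it is to be defined by a case split driven by $i$: when $i=0$ the conjunct guarantees $A$, so I route the witnesses of (say) the first copy of $A$ on the left into the unique $A$ on the right, while any $\bigvee\Delta$ disjuncts pass through unchanged; when $i=1$ the conjunct $\neg A$ falsifies both copies of $A$ on the left, forcing the left-hand choice into $\bigvee\Delta$, whose witnesses then transfer directly to $\bigvee\Delta$ on the right. The delicate point will be verifying that this case analysis packages into a uniform tuple of terms meeting the recursive clauses of Definition \ref{t2-12}; this rests on the closure properties of $\mathbb{T}$ from Definition \ref{t2-2} together with the availability of characteristic terms for $\Phi$ assumed by Lemma \ref{t2-25}. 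Once the single-step reduction is in place, a final Weak Gluing stitches together the full deterministic flow witnessing $\Gamma\rhd\Delta,A$.
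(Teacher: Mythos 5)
Your handling of exchange and weakening is fine, and your high-level strategy for contraction (reduce to $A\vee A\rhd A$ via disjunction application and weak gluing, handle the non-deterministic case by the trivial $\mathcal{B}$-provable implication) matches the paper. However, there are two genuine gaps in the deterministic case.

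First, left contraction is not a single deterministic step. You claim $\bigwedge\Gamma\wedge A\leq_{d}\bigwedge\Gamma\wedge A\wedge A$ holds by ``duplicating the witness for $A$,'' but this only works when $A$ is led by an existential block. Here $A\in\Pi_k(\Phi)$, so it is led by universals. Writing $A=\forall\vec{u}\leq\vec{p}\,C(\vec{u})$, the target $A\wedge A$ in strict form is $\forall\vec{u}_1,\vec{u}_2\leq\vec{p}\,(C(\vec{u}_1)\wedge C(\vec{u}_2))$, and by Definition \ref{t2-12} the reduction requires a \emph{single} term $F(\vec{u}_1,\vec{u}_2)$ producing one point of $A$'s domain such that the bodies reduce: $C(F(\vec{u}_1,\vec{u}_2))\leq_d C(\vec{u}_1)\wedge C(\vec{u}_2)$. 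For, say, $C(u)\equiv(u=5)$ no such $F$ exists. This is precisely the obstruction in Example \ref{t2-16}: a single deterministic step can only query the hypothesis at one point of its universal domain, whereas contraction on the left needs two. The paper avoids this by proving $A\rhd A\wedge A$ as a genuine flow, obtained by negation-duality from $A\vee A\rhd A$.

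Second, your right contraction argument raises the complexity level out of $\Pi_k(\Phi)$. You propose inserting $\exists i\leq 1\,[(i=0\rightarrow A)\wedge(i=1\rightarrow\neg A)]$ into the flow, but this formula, with $A\in\Pi_k(\Phi)$ and $\neg A\in\Sigma_k(\Phi)$, sits in $\Sigma_{k+1}(\Phi)$, not $\Pi_k(\Phi)$; consequently the intermediate $(\bigvee\Delta\vee A\vee A)\wedge\exists i\leq 1[\dots]$ is also $\Sigma_{k+1}$, and the resulting object is not a $(\Pi_k(\Phi),\mathcal{B})$-flow. Moreover, Conjunction Application (Lemma \ref{t2-21}) requires the conjoined side formula $C$ to already be in the class $\Pi$ in which the flow lives, so the gluing step you describe is not licensed. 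The paper resolves this by proving the claim $A\vee A\rhd^{\Psi}A$ by induction on the prefix structure of $A$: for $A=\forall\vec{z}\leq\vec{t}\,B$ one lifts the inductive hypothesis for $B\in\Sigma_k$ under the outer universal, and for $A=\exists\vec{z}\leq\vec{t}\,B$ one applies Lemma \ref{t2-25} to the \emph{body} $B\in\Pi_k$ (not to $A$ itself), so that the resulting $\Sigma_{k+1}$ flow exactly matches the level of $A\in\Sigma_{k+1}$ and the characteristic-function flow threads underneath the outer $\exists\vec{u},\vec{v}\leq\vec{t}$ without escaping the class.
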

\begin{proof}
The weakening and the exchange cases are trivial. For the contraction case notice that since we have conjunction and disjunction applications and also the gluing rule, it is enough to prove the following claim:\\

\textbf{Claim.} If $\Psi \in \{\Sigma_k(\Phi), \Pi_k(\Phi)\}$ and $A \in \Psi$, then:
\begin{itemize}
\item[$(i)$]
$A(\vec{x}) \rhd^{\Psi} A(\vec{x}) \wedge A(\vec{x}) $.
\item[$(ii)$]
$A(\vec{x}) \vee A(\vec{x}) \rhd^{\Psi} A(\vec{x}) $.
\end{itemize}

For the non-deterministic case there is nothing to prove because the left side and the right side are provably equivalent. For the deterministic case of $(ii)$, use induction on the complexity of $A$. If $A \in \Phi$, then there is nothing to prove. If $A=\forall \vec{z} \leq \vec{t}(\vec{x}) \; B(\vec{x}, \vec{z})$, then since $B \in \Sigma_k(\Phi)$, by IH we will have $B(\vec{x}, \vec{z}) \vee B(\vec{x}, \vec{z}) \rhd^{\Sigma_k} B(\vec{x}, \vec{z}) $. Therefore, it is clear that 
\[
\forall \vec{u} \leq \vec{t}(\vec{x}) \; B(\vec{x}, \vec{u}) \vee \forall \vec{v} \leq \vec{t}(\vec{x}) B(\vec{x}, \vec{v}) \rhd^{\Pi_{k+1}} \forall \vec{z} \leq \vec{t}(\vec{x})\;  B(\vec{x}, \vec{z}),
\]
because it is enough to add $\forall \vec{z} \leq \vec{t}(\vec{x})$ in front of all formulas in the flow and then witness them by themselves. Hence,
\[
\forall \vec{z} \leq \vec{t}(\vec{x}) \; [B(\vec{x}, \vec{z}) \vee B(\vec{x}, \vec{z})] \rhd^{\Pi_k} \forall \vec{z} \leq \vec{t}(\vec{x})\;  B(\vec{x}, \vec{z}).
\]
And then we have to add
\[
\forall \vec{u} \leq \vec{t}(\vec{x}) \; B(\vec{x}, \vec{u}) \vee \forall \vec{v} \leq \vec{t}(\vec{x}) B(\vec{x}, \vec{v}) 
\]
as the first formula in the flow to have
\[
\forall \vec{u} \leq \vec{t}(\vec{x}) \; B(\vec{x}, \vec{u}) \vee \forall \vec{v} \leq \vec{t}(\vec{x}) B(\vec{x}, \vec{v}) \rhd^{\Pi_{k+1}} \forall \vec{z} \leq \vec{t}(\vec{x})\;  B(\vec{x}, \vec{z}).
\]
Notice that we have to use the variable $z$ as the witness for both of $u$ and $v$.\\

If $A=\exists \vec{z} \leq \vec{t}(\vec{x}) \; B(\vec{x}, \vec{z})$, then note that we have
$B(\vec{u}) \wedge \neg B(\vec{u}) \rhd_d \bot$ and $B(\vec{v}) \wedge \neg B(\vec{v}) \rhd_d \bot$ and hence by propositional rules 
\[
(B(\vec{u}) \vee B(\vec{v})) \wedge \neg B(\vec{u}) \wedge \neg B(\vec{v}) \rhd_d \bot \;\; (*)
\]
Therefore, there is a flow from
\[
[B(\vec{u}) \vee B(\vec{v})] \wedge \exists i,j \leq 1 \; (\chi_B(\vec{u})=i) \wedge (\chi_B(\vec{v})=j) 
\]
to
\[
\exists i,j \leq 1 \; [(\chi_B(\vec{u})=i) \wedge (\chi_B(\vec{v})=j)] \wedge (i=1 \vee j=1)
\]
where $\chi_B(\vec{u})=i$ means $(i=1 \rightarrow B(\vec{u})) \wedge (i=0 \rightarrow \neg B(\vec{u}))$.
It is enough to define the sequence of statements in between by the following scheme: If $i=j=1$, then use $B(u) \wedge B(v)$. If $i=1$ and $j=0$ use $B(u) \wedge \neg B(v)$. If $i=0$ and $j=1$ use $\neg B(u) \wedge B(v)$. And finally if $i=j=0$, use the flow from $(*)$.\\

Therefore, using the Lemma \ref{t2-25}, we know that there is a flow from 
\[
\exists \vec{u}, \vec{v} \leq \vec{t} \; B(\vec{u}) \vee B(\vec{v})
\]
to 
\[
\exists \vec{u}, \vec{v} \leq \vec{t} \; \exists i,j \leq 1 \; [(\chi_B(u)=i) \wedge (\chi_B(v)=j)] \wedge (i=1 \vee j=1).
\]
Now, it is enough to show that
\[
\exists \vec{u}, \vec{v}  \leq \vec{t}(\vec{x}) \; \exists i,j \leq 1 \; (i=1 \vee j=1) \wedge (\chi_B(u)=i) \wedge (\chi_B(v)=j)
\]
is reducible to $\exists \vec{y} \leq \vec{t}(\vec{x}) \; B(\vec{x}, \vec{y})$. It is enough to read $i$ and $j$ and decide between the cases that $i=1$ or $j=1$. Then based on that decision we can decide to witness $\exists \vec{y} \leq \vec{t}(\vec{x}) \; B(\vec{x}, \vec{y})$ as $\exists \vec{u} \leq \vec{t}(\vec{x}) \; B(\vec{x}, \vec{u})$ or $\exists \vec{v} \leq \vec{t}(\vec{x}) \; B(\vec{x}, \vec{v})$.\\

The case $(i)$ is the dual of $(ii)$ and provable by just taking negations.
\end{proof}

\begin{lem}\label{t2-28}(Cut and Induction)
\begin{itemize}
\item[$(i)$]
If $\Gamma_0(\vec{x}) \rhd A(\vec{x}), \Delta_0(\vec{x}) $ and $\Gamma_1(\vec{x}), A(\vec{x}) \rhd \Delta_1(\vec{x})$, then $\Gamma_0(\vec{x}), \Gamma_1(\vec{x}) \rhd \Delta_0(\vec{x}), \Delta_1(\vec{x})$.
\item[$(ii)$]
If $s \in \mathbb{T}$ and $\Gamma(\vec{x}), A(y, \vec{x}) \rhd \Delta(\vec{x}), A(y+1, \vec{x})$, then $\Gamma(\vec{x}), A(0, \vec{x}) \rhd \Delta(\vec{x}), A(s(\vec{z}, \vec{x}), \vec{x})$.
\end{itemize}
\end{lem}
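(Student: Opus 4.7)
For part (i), my plan is to chain the two hypothesis flows through the cut formula $A$ using the propositional apparatus already developed. Reading the hypotheses as $\bigwedge\Gamma_0 \rhd \bigvee\Delta_0 \vee A$ and $\bigwedge\Gamma_1 \wedge A \rhd \bigvee\Delta_1$, I first apply Conjunction Application (Lemma \ref{t2-21}) with the side context $\bigwedge\Gamma_1$ to the former, obtaining
\[
\bigwedge\Gamma_0 \wedge \bigwedge\Gamma_1 \;\rhd\; (\bigvee\Delta_0 \vee A) \wedge \bigwedge\Gamma_1.
\]
The right-hand side deterministically reduces by distribution and weakening to $\bigvee\Delta_0 \vee (A \wedge \bigwedge\Gamma_1)$. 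A further Disjunction Application (Lemma \ref{t2-22}) with context $\bigvee\Delta_0$ applied to the second hypothesis gives $\bigvee\Delta_0 \vee (A \wedge \bigwedge\Gamma_1) \rhd \bigvee\Delta_0 \vee \bigvee\Delta_1$. Weak gluing (Lemma \ref{t2-24}(i)) stitches these three steps into the desired flow.

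For part (ii) the difficulty is that strong gluing (Lemma \ref{t2-24}(ii)) is context-free: it iterates $A(y) \rhd A(y+1)$ to $A(0) \rhd A(s)$, but the induction rule must drag both $\Gamma$ on the left and $\Delta$ on the right across every step. The key move is to collapse both side contexts into a single invariant
\[
H'(u, \vec{x}) \;:=\; \bigwedge\Gamma(\vec{x}) \,\wedge\, \bigl(\bigvee\Delta(\vec{x}) \,\vee\, A(u, \vec{x})\bigr).
\]
To prove $H'(u) \rhd H'(u+1)$, I will first feed the hypothesis through Conjunction Application with $\bigwedge\Gamma$ and absorb the duplicated $\bigwedge\Gamma$ via contraction (Lemma \ref{t2-27}) to get $\bigwedge\Gamma \wedge A(u) \rhd H'(u+1)$. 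Then I distribute $H'(u) \leq_d (\bigwedge\Gamma \wedge \bigvee\Delta) \vee (\bigwedge\Gamma \wedge A(u))$, observe the trivial $\bigwedge\Gamma \wedge \bigvee\Delta \leq_d H'(u+1)$ by disjunction introduction, and combine the two disjuncts via Disjunction Application together with one further contraction. Strong gluing then yields $H'(0) \rhd H'(s)$. Bracketing by the obvious deterministic reductions $\bigwedge\Gamma \wedge A(0) \leq_d H'(0)$ and $H'(s) \leq_d \bigvee\Delta \vee A(s)$ and one more application of weak gluing produces the conclusion.

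The main obstacle is locating this invariant: the naive candidates $A(u)$, $\bigwedge\Gamma \wedge A(u)$, and $\bigvee\Delta \vee A(u)$ all collapse, because each iteration of the hypothesis consumes $\bigwedge\Gamma$ on the left and creates $\bigvee\Delta$ on the right, so nothing remains to feed into the next step. The combined form $H'$ threads both sides through the induction, and its membership in the ambient $\pi$-class (or $\Pi_k(\Phi)$) is free since these classes are closed under conjunction and disjunction. The non-deterministic case of both parts is essentially automatic: the required implications are already provable in $\mathcal{B}$ from the hypotheses, and the closure of $\mathbb{T}$ under successor, sum, and product supplies the required length bound for the flow.
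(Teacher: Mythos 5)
Your proof is correct, and part (i) follows the paper's argument essentially verbatim: conjunction application with $\bigwedge\Gamma_1$ on the first flow, the distributivity step $(\bigvee\Delta_0 \vee A)\wedge\bigwedge\Gamma_1 \leq_d \bigvee\Delta_0 \vee (A\wedge\bigwedge\Gamma_1)$, disjunction application with $\bigvee\Delta_0$ on the second flow, and weak gluing. For part (ii), you correctly diagnose the obstacle --- strong gluing is context-free, so both side contexts must be absorbed into the iterated formula --- but your invariant is the dual of the paper's. The paper uses $(\bigwedge\Gamma \wedge A(u)) \vee \bigvee\Delta$, whereas you use $\bigwedge\Gamma \wedge (\bigvee\Delta \vee A(u))$; these are not logically equivalent (they disagree when $\bigwedge\Gamma$ is false and $\bigvee\Delta$ is true), so this is a genuinely different choice of invariant. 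Both work, and the two proofs of the inductive step are mirror images: the paper contracts $\bigvee\Delta$ after applying disjunction application, then uses the propositional fact $(\phi\vee\psi)\wedge(\sigma\vee\psi)\rhd(\phi\wedge\sigma)\vee\psi$ together with one more contraction; you contract $\bigwedge\Gamma$ after conjunction application, then split $H'(u)$ by distribution and reassemble the two disjuncts with the disjunction rule plus one contraction. Neither version is shorter or avoids any lemma the other uses (both need the already-established contraction from Lemma \ref{t2-27}, distribution as a deterministic reduction, and strong gluing), so the difference is cosmetic rather than structural, but you should be aware that you are not reproducing the paper's invariant and should double-check in the deterministic case that your $H'$ lands in $\Pi_k(\Phi)$ after invoking $\mathcal{R}$-conjunction and $\mathcal{R}$-disjunction closure --- which it does, by the same closure argument that the paper's invariant relies on.
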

\begin{proof}
For $(i)$, Since $\Gamma_0 \rhd \Delta_0, A$ and $ \Gamma_1, A \rhd \Delta_1$ then $\bigwedge \Gamma_0 \rhd \bigvee\Delta_0 \vee A$ and $\bigwedge \Gamma_1 \wedge A \rhd \bigvee \Delta_1$. Apply conjunction with $\bigwedge \Gamma_1$ on the first one and disjunction with $\bigvee \Delta_0$ on the second one to prove $\bigwedge \Gamma_1 \wedge \bigwedge \Gamma_0 \rhd (\bigvee\Delta_0 \vee A) \wedge \bigwedge \Gamma_1$ and $(\bigwedge \Gamma_1 \wedge A) \vee \bigvee \Delta_0  \rhd \bigvee \Delta_1 \vee \bigvee \Delta_0$. Since $(\bigvee\Delta_0 \vee A) \wedge \bigwedge \Gamma_1 \leq (\bigwedge \Gamma_1 \wedge A) \vee \bigvee \Delta_0 $, by using gluing we will have $\bigwedge \Gamma_1 \wedge \bigwedge \Gamma_0 \rhd \bigvee \Delta_0 \vee \bigvee \Delta_1$.\\

For $(ii)$ we reduce the induction case to the strong gluing case. Since $\Gamma, A(y, \vec{x}) \rhd \Delta, A(y+1, \vec{x})$ by definition, $\bigwedge \Gamma \wedge A(y, \vec{x}) \rhd \bigvee \Delta \vee A(y+1, \vec{x})$. Therefore, by the Lemma \ref{t2-22} we have
\[
(\bigwedge \Gamma \wedge A(y, \vec{x})) \vee \bigvee \Delta \rhd  \bigvee \Delta \vee A(y+1, \vec{x}) \vee \bigvee \Delta
\]
and by contraction for $\bigvee \Delta$ we know 
\[
\bigvee \Delta \vee A(y+1, \vec{x}) \vee \bigvee \Delta \rhd \bigvee \Delta \vee A(y+1, \vec{x}).
\]
Hence,
\[
(\bigwedge \Gamma \wedge A(y, \vec{x})) \vee \bigvee \Delta \rhd  \bigvee \Delta \vee A(y+1, \vec{x}).
\]
Then by conjunction introduction and the fact that $(\bigwedge \Gamma \wedge A(y, \vec{x})) \vee \bigvee \Delta) \rhd \bigwedge \Gamma \vee \bigvee \Delta$,
\[
((\bigwedge \Gamma \wedge A(y, \vec{x})) \vee \bigvee \Delta), (\bigwedge \Gamma \wedge A(y, \vec{x})) \vee \bigvee \Delta) \rhd (\bigvee \Delta \vee A(y+1, \vec{x})) \wedge (\bigwedge \Gamma \vee \bigvee \Delta)
\]
By using the propositional, structural and the cut rule, it is easy to prove
\[
(\phi \vee \psi) \wedge (\sigma \vee \psi) \rhd (\phi \wedge \sigma) \vee \psi.
\]
Hence, by using the contraction we have
\[
(\bigwedge \Gamma \wedge A(y, \vec{x})) \vee \bigvee \Delta \rhd (\bigwedge \Gamma \wedge A(y+1, \vec{x})) \vee \bigvee \Delta.
\]
Now by strong gluing we have
\[
(\bigwedge \Gamma \wedge A(0, \vec{x})) \vee \bigvee \Delta \rhd (\bigwedge \Gamma \wedge A(s(\vec{z}, \vec{x}), \vec{x})) \vee \bigvee \Delta.
\]
But since $\Gamma \wedge A(0, \vec{x}) \rhd (\bigwedge \Gamma \wedge A(0, \vec{x})) \vee \bigvee \Delta $
and
\[
(\bigwedge \Gamma \wedge A(s(\vec{x}), \vec{x})) \vee \bigvee \Delta  \leq \bigvee \Delta \vee A(s(\vec{z}, \vec{x}), \vec{x}),
\]
we have
\[
\Gamma(\vec{x}), A(0, \vec{x}) \rhd \Delta(\vec{x}), A(s(\vec{z}, \vec{x}), \vec{x}).
\]
\end{proof}

\begin{lem}\label{t2-29}(Implication Rules) 
\begin{itemize}
\item[$(i)$]
If $\Gamma_0 \rhd \Delta_0, A $ and $\Gamma_1, B \rhd \Delta_1$ then $\Gamma_0, \Gamma_1, A \rightarrow B \rhd \Delta_0, \Delta_1$.
\item[$(ii)$]
If $\Gamma, A \rhd \Delta, B$ then $\Gamma \rhd \Delta, A \rightarrow B$.
\end{itemize}
In the deterministic case, we assume $A \rightarrow B \in \Pi_k(\Phi)$.
\end{lem}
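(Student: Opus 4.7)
The plan is to invoke the classical $\mathcal{R}$-equivalence $A \to B \leftrightarrow \neg A \vee B$ and reduce both parts of the lemma to the already-established negation (Lemma \ref{t2-26}) and disjunction (Lemma \ref{t2-23}) rules, in the same spirit as the proofs of the preceding lemmas in this section.

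For $(i)$, I would first apply Lemma \ref{t2-26} $(ii)$ to the hypothesis $\Gamma_0 \rhd \Delta_0, A$ to obtain $\Gamma_0, \neg A \rhd \Delta_0$. Then Lemma \ref{t2-23} $(iv)$, applied to this flow together with the second hypothesis $\Gamma_1, B \rhd \Delta_1$, yields
\[
\Gamma_0, \Gamma_1, \neg A \vee B \rhd \Delta_0, \Delta_1.
\]
The deterministic hypothesis $A \to B \in \Pi_k(\Phi)$ is exactly what licenses identifying this formula with the $\mathcal{R}$-disjunction of $\neg A$ and $B$ inside the $\pi$-class, after which the antecedent is replaced by $A \to B$ and $(i)$ follows. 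For $(ii)$, I would apply Lemma \ref{t2-26} $(i)$ to $\Gamma, A \rhd \Delta, B$ to obtain $\Gamma \rhd \Delta, B, \neg A$; under the sequent interpretation $\Gamma \rhd \Delta', \Sigma'$ meaning $\bigwedge \Gamma \rhd \bigvee \Delta' \vee \bigvee \Sigma'$, associativity of disjunction reads this as $\Gamma \rhd \Delta, \neg A \vee B$, and the same $\mathcal{R}$-equivalence rewrites it as $\Gamma \rhd \Delta, A \to B$.

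The non-deterministic case is essentially immediate, since $\rhd_n$ unpacks to provable implication in $\mathcal{B}$ and both statements reduce to pure propositional reasoning (with the witnessing flow data $(t,H)$ built exactly as in Lemmas \ref{t2-21}--\ref{t2-28}). The main obstacle I expect is complexity bookkeeping in the deterministic case: at each invocation of Lemma \ref{t2-26} one must verify that the negated subformula $\neg A$ lies in the correct $\Pi_j \cup \Sigma_j$ for the ambient sequent level, and that no intermediate formula in the resulting flow exceeds the class of $A \to B$. The assumption $A \to B \in \Pi_k(\Phi)$ is tailored precisely to keep this bookkeeping consistent, so this is a tracking chore rather than a structural obstruction.
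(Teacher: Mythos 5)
Your deterministic argument matches the paper's: rewrite $A\to B$ as $\neg A\vee B$ and chain Lemmas \ref{t2-26} and \ref{t2-23}. But you leave the "complexity bookkeeping" unresolved, and it is actually the whole point: the paper observes that $A\to B\in\Pi_k(\Phi)$ forces $A\to B\in\Phi$ (an implication cannot be the top connective of any strictly positive level of the hierarchy), so $A$, $B$, $\neg A$, $\neg A\vee B$ are all quantifier-free and lie in $\Phi$ by closure under booleans. That observation is what licenses every invocation of Lemma \ref{t2-26}; without stating it, your proof of the deterministic case has no guarantee the negated formula stays in the ambient class.

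The non-deterministic case is where there is a genuine gap. First, $\rhd_n$ does not ``unpack to provable implication in $\mathcal{B}$''---that is $\leq_n$, a single reduction step. A non-deterministic flow is a term-length chain $(t,H)$ with $\mathcal{B}\vdash H(u)\to H(u+1)$; $\mathcal{B}$ need not prove $H(0)\to H(t)$, and indeed that is precisely what the theory $\mathfrak{B}(\mathbb{T},\Pi,\mathcal{A})$ adds. Second, and more fatally, the $\neg A\vee B$ decomposition simply does not apply here: a $\pi$-class $\Pi$ is closed under subformulas, $\mathcal{R}$-conjunction, $\mathcal{R}$-disjunction and bounded universal quantification, but \emph{not} under negation in general; from $A\to B\in\Pi$ you get $A,B\in\Pi$, not $\neg A\in\Pi$. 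The paper therefore avoids negation entirely in this case and argues by a modus-ponens style manipulation: from $\Gamma_0\rhd\Delta_0,A$, conjoin $A\to B$ on both sides, push the conjunction inside the disjunction to isolate $A\wedge(A\to B)$, use $A\wedge(A\to B)\leq_n B$ as a single reduction, and then cut against $\Gamma_1,B\rhd\Delta_1$. Part (ii) is handled analogously via disjunction application and $B\leq_n A\to B$. Your route is structurally different from the paper's in the non-deterministic case and, as written, does not go through.
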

\begin{proof}
For $(i)$, in the deterministic case notice that if $A \rightarrow B \in \Pi_k(\Phi)$ then $k=0$ and since $\Phi$ is closed under subformulas, $A, B \in \Phi$ and $\neg A \in \Phi$. Therefore, by definition, it is easy to see that $A \rightarrow B \equiv \neg A \vee B$. Therefore:\\

For $(i)$ since $\Gamma_0 \rhd \Delta_0, A $ by the Lemma \ref{t2-26} we have $\Gamma_0, \neg A \rhd \Delta_0$. On the other hand, we have $\Gamma_1, B \rhd \Delta_1$. Therefore, by the Lemma \ref{t2-23} we have $\Gamma_0, \Gamma_1, \neg A \vee B \rhd \Delta_0, \Delta_1$. Since $A \rightarrow B \rhd \neg A \vee B$, by using cut we have$\Gamma_0, \Gamma_1, A \rightarrow B \rhd \Delta_0, \Delta_1$. \\

For $(ii)$, if we have $\Gamma, A \rhd \Delta, B$ then by the Lemma \ref{t2-26} we have $\Gamma, \rhd \Delta, \neg A, B$. Hence by the Lemma \ref{t2-23} we have $\Gamma, \rhd \Delta, (\neg A \vee B), (\neg A \vee B)$. By contraction, $\Gamma, \rhd \Delta, (\neg A \vee B)$. Since $\neg A \vee B \rhd A \rightarrow B$, by cut $\Gamma, \rhd \Delta, A \rightarrow B$.\\

For the non-deterministic case note that when $A \rightarrow B \in \Pi$ then since $\Pi$ is closed under subformulas, we have $A, B \in \Pi$. For $(i)$, since $\Gamma_0 \rhd \Delta_0, A $ by conjunction application we have 
\[
\bigwedge \Gamma_0 \wedge A \rightarrow B \rhd (\bigvee \Delta_0 \vee A) \wedge A \rightarrow B.
\]
Since 
\[
(\bigvee \Delta_0 \vee A) \wedge (A \rightarrow B) \rhd \bigvee \Delta_0 \vee (A \wedge (A \rightarrow B)),
\]
and $A \wedge A \rightarrow B \leq_n B$, we have 
\[
\bigvee \Delta_0 \vee (A \wedge (A \rightarrow B)) \rhd \bigvee \Delta_0 \vee B.
\]
And then since $\Gamma_1 \rhd B, \Delta_1$, by cut on $B$ we have
\[
\Gamma_0, \Gamma_1, A \rightarrow B \rhd \Delta_0, \Delta_1.
\]
\\
For $(ii)$, if $\Gamma, A \rhd B, \Delta$, then by disjunction application 
\[
(\bigwedge \Gamma \wedge A) \vee (A \rightarrow B) \rhd \bigvee \Delta \vee B \vee (A \rightarrow B).
\]
And since
\[
((\bigwedge \Gamma \vee (A \rightarrow B)) \wedge (A \vee (A \rightarrow B)) \rhd (\bigwedge \Gamma \wedge A) \vee (A \rightarrow B),
\]
we have 
\[
((\bigwedge \Gamma \vee (A \rightarrow B)) \wedge (A \vee (A \rightarrow B)) \rhd \bigvee \Delta \vee B \vee (A \rightarrow B).
\]
Since $B \leq_n (A \rightarrow B)$, by contraction and cut we have $B \vee (A \rightarrow B) \rhd A \rightarrow B$. On the other hand, $ \leq A \vee (A \rightarrow B)$. Hence 
\[
\Gamma \rhd ((\bigwedge \Gamma \vee (A \rightarrow B)) \wedge (A \vee (A \rightarrow B)),
\]
and therefore by gluing $\Gamma \rhd \Delta, A \rightarrow B$.
\end{proof}
The following theorem is the main theorem of the theory of flows in bounded theories of arithmetic:
\begin{thm}\label{t2-30}(Soundness)
\begin{itemize}
\item[$(i)$]
If $\Gamma(\vec{x}) \cup \Delta(\vec{x}) \subseteq \Pi_k(\Phi)$, $\mathfrak{B}(\mathbb{T}_{all}, \Pi_k(\Phi), \mathcal{A}) \vdash \Gamma(\vec{x}) \Rightarrow \Delta(\vec{x})$ and $ \mathcal{A} \subseteq \mathcal{B}$ then $ \Gamma \rhd_{d}^{(\Pi_k(\Phi), \mathcal{B})} \Delta$.
\item[$(ii)$]
If $\Pi$ is a $\pi$-class, $\Gamma(\vec{x}) \cup \Delta(\vec{x}) \subseteq \Pi$, $\mathfrak{B}(\mathbb{T}, \Pi, \mathcal{A}) \vdash \Gamma(\vec{x}) \Rightarrow \Delta(\vec{x})$ and $ \mathcal{A} \subseteq \mathcal{B}$ then $ \Gamma \rhd_{n}^{(\mathbb{T}, \Pi, \mathcal{B})} \Delta$.
\end{itemize}
\end{thm}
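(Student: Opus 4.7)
The plan is to proceed by induction on the structure of a sequent calculus derivation witnessing $\mathfrak{B}(\cdots) \vdash \Gamma \Rightarrow \Delta$. Before starting the induction, I would invoke Corollary \ref{t2-11} to replace the given derivation with one in which every formula appearing lies in $\Pi_k(\Phi)$ for case $(i)$ or in $\Pi$ for case $(ii)$. This step is indispensable: the intermediate formulas $H(u, \vec{x})$ of any flow are required, by Definitions \ref{t2-18} and \ref{t2-19}, to live in the relevant class, so without the corollary a cut on a formula outside the class would break the inductive hypothesis.

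The induction step then matches each inference rule of the calculus to one of the previously proved lemmas. Axioms $A \Rightarrow A$ and $\bot \Rightarrow$ give trivial one-step flows with $t = 0$ and $H(0, \vec{x}) = A$ (respectively $\bot$), while the axiom $\Rightarrow A$ with $A \in \mathcal{A}$ is immediate from the hypothesis $\mathcal{A} \subseteq \mathcal{B}$. The structural rules are covered by Lemma \ref{t2-27}, the $\wedge$ and $\vee$ rules by Lemma \ref{t2-23}, the negation rules by Lemma \ref{t2-26}, the implication rules by Lemma \ref{t2-29}, the cut rule by Lemma \ref{t2-28}$(i)$, and the induction rule by Lemma \ref{t2-28}$(ii)$.

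The quantifier rules are not directly covered by any earlier lemma and require a small amount of additional work. For $\forall^{\leq} R$, if the premise yields a flow from $\Gamma \wedge (y \leq t)$ to $\Delta \vee A(y)$ with eigenvariable $y$, I would build the desired flow for the conclusion by prefixing $\forall y \leq t$ to each intermediate formula $H(u, \vec{x}, y)$; this stays in the class because both $\Pi_k(\Phi)$ and any $\pi$-class are closed under bounded universal quantification, and in the deterministic case the bound variable is witnessed by the identity term. The rule $\exists^{\leq} L$ is dual. The rules $\forall^{\leq} L$ and $\exists^{\leq} R$ reduce to instantiating by the witnessing term $s$ using the side-premise $s \leq t$. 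Unbounded quantifier rules appear only when the ambient class contains unbounded formulas, in which case the same prefixing trick works since the relevant closure is then available; otherwise Corollary \ref{t2-11} prevents such a rule from occurring in the proof we are inducting on.

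The main obstacle will be the deterministic bounded quantifier cases: the sequences of terms $E_0, E_1, G_0, G_1, F(u)$ of Definition \ref{t2-19} must be promoted into sequences that witness the new prefixed quantifier, and one has to check that the reductions between successive prefixed formulas $H'(u, \vec{x}) = \forall y \leq t \, H(u, \vec{x}, y)$ remain deterministic $\Pi_k(\Phi)$-reductions in the sense of Definition \ref{t2-12}. Once this bookkeeping is in place, both clauses of the theorem follow by a uniform mechanical application of the flow calculus established in Lemmas \ref{t2-21}--\ref{t2-29}.
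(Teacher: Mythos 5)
Your plan matches the paper's essentially point for point: induction on the length of a free-cut-free derivation (which is exactly where Corollary~\ref{t2-11} enters), with the structural, propositional, cut and induction cases delegated to Lemmas~\ref{t2-27}, \ref{t2-23}, \ref{t2-26}, \ref{t2-29} and \ref{t2-28}, and the bounded quantifier rules built by hand. Two details in the quantifier cases, however, need to be made explicit, and one of your remarks cannot be taken at face value.

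First, in the $\forall^{\leq}R$ case the length $t(\vec{x},y)$ of the inductively obtained flow may depend on the eigenvariable $y$. Before prefixing $\forall y \leq p(\vec{x})$ to the intermediate formulas you must first extend the flow, by repeating its last formula, to a uniform length $t'(\vec{x})$ monotonically majorizing $t(\vec{x}, p(\vec{x}))$; the paper does this explicitly, and it uses the monotone majorizing subset clause of Definition~\ref{t2-2}. Without this normalization, the prefixed object is not a flow, since its length would vary with the bound variable you just quantified away.

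Second, $\exists^{\leq}L$ is \emph{not} handled by mirroring the $\forall^{\leq}R$ construction with an existential prefix: writing $\exists y \leq p\,H(u,\vec{x},y)$ would push the intermediate formulas out of $\Pi$ (resp.\ out of $\Pi_k(\Phi)$), because a $\pi$-class is closed only under bounded \emph{universal}, not bounded existential, quantification. What the paper actually does is exploit the negation clause of Definition~\ref{t2-5}: since $\exists y \leq p\,B(y) \in \Pi$, we have $\neg B \in \Pi$, so Lemma~\ref{t2-26} lets one move $B$ across the turnstile as $\neg B$, the already-treated $\forall^{\leq}R$ case then yields $\Gamma \rhd \Delta, \forall y\leq p\,\neg B(y)$, and finally one discharges via the reduction $\forall y\leq p\,\neg B \wedge \exists y\leq p\,B \rhd \bot$ together with conjunction application and gluing. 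If ``dual'' was shorthand for this negation detour, your plan is fine; if you meant literal prefixing it would fail. Your treatment of $\forall^{\leq}L$ by prepending a single term-instantiation step agrees with the paper; for $\exists^{\leq}R$ the paper also routes through Lemma~\ref{t2-26} and the $\forall^{\leq}L$ case, but your more direct witnessing is essentially the same reduction.
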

\begin{proof}
We prove the lemma by induction on the length of the free-cut free proof of $\Gamma(\vec{x}) \Rightarrow \Delta(\vec{x})$.\\

1. (Axioms). If $\Gamma(\vec{x}) \Rightarrow \Delta(\vec{x})$ is a logical axiom then the claim is trivial. If it is a non-logical axiom then the claim will be also trivial because all non-logical axioms are quantifier-free and provable in $\mathcal{B}$. Therefore there is nothing to prove.\\

2. (Structural Rules). It is proved in the Lemma \ref{t2-27}.\\ 

3. (Cut). It is proved by Lemma \ref{t2-28}.\\

4. (Propositional). The conjunction and disjunction cases are proved in the Lemma \ref{t2-23}. The implication and negation cases are proved in the Lemma \ref{t2-29}. \\

5. (Bounded Universal Quantifier, Right). If $\Gamma(\vec{x}) \Rightarrow \Delta(\vec{x}), \forall z \leq p(\vec{x})  B(\vec{x}, z)$ is proved by the $\forall^{\leq} R$ rule by $\Gamma(\vec{x}), z \leq p(\vec{x}) \Rightarrow \Delta(\vec{x}), B(\vec{x}, z)$, then by IH $\Gamma(\vec{x}), z \leq p(\vec{x}) \rhd_{d} \Delta(\vec{x}), B(\vec{x}, z)$. Therefore, there exists a term $t(\vec{x})$, a formula $H(u, \vec{x}, z) \in \Pi_k(\Phi)$ and sequences of terms $E_0$ $E_1$, $G_0$, $G_1$ and $F(u)$ such that the conditions of the Definition \ref{t2-12} are provable in $\mathcal{B}$. First of all, extend the sequence by repeating the last formula to reach a majorization $t'(\vec{x})$ of $t(\vec{x}, p(\vec{x}))$. This is possible because $z \leq p(\vec{x})$ and $t$ is monotone. Then, define $t'(\vec{x}) \geq t(\vec{x}, p(\vec{x}))$ and $H'(u, \vec{x})=\forall z \leq p(\vec{x}) H(u, \vec{x}, z)$ and finally define $E'_0$ $E'_1$, $G'_0$, $G'_1$ and $F'(u)$ as functions that read the outmost quantifier $\forall z$ and sends it to itself and then apply the corresponding operations. Since $H(u, \vec{x}, z) \in \Pi_k(\Phi)$, then $\forall z \leq p(\vec{x}) H(u, \vec{x}, z) \in \Pi_k(\Phi)$. The other conditions to check that the new sequence is a $(\Pi_k(\Phi), \mathcal{B})$-flow is straightforward .\\ 

5$'$. For the non-deterministic case, by IH we have $\Gamma(\vec{x}), z \leq p(\vec{x}) \rhd_{n} \Delta(\vec{x}), B(\vec{x}, z)$. Therefore, there exists a term $t(\vec{x}) \in \mathbb{T}$, a formula $H(u, \vec{x}, z) \in \Pi$ such that the conditions of the Definition \ref{t2-13} are provable in $\mathcal{B}$. First of all, extend the sequence by repeating the last formula to reach a majorization $t'(\vec{x})$ of $t(\vec{x}, p(\vec{x}))$. This is possible since $z \leq p(\vec{x})$ and $t$ is monotone. Then, define $t'(\vec{x}) \geq t(\vec{x}, p(\vec{x}))$ and $H'(u, \vec{x})=\forall z \leq p(\vec{x}) H(u, \vec{x}, z)$. Since $H(u, \vec{x}, z) \in \Pi$ then $\forall z \leq p(\vec{x}) H(u, \vec{x}, z) \in \Pi$. The other conditions to check that the new sequence is a $(\mathbb{T}, \Pi, \mathcal{B})$-flow is a straightforward consequence of the fact that if $\mathcal{B} \vdash \forall u \leq t'(\vec{x}) H(u, z, \vec{x}) \rightarrow H(u+1, z, \vec{x})$, then
\[
\mathcal{B} \vdash \forall u \leq t'(\vec{x}) \forall z \leq p(\vec{x}) H(u, z, \vec{x}) \rightarrow \forall z \leq p(\vec{x}) H(u+1, z, \vec{x}).
\]
\\
6. (Bounded Universal Quantifier, Left). Suppose $\Gamma(\vec{x}), s(\vec{x}) \leq p(\vec{x}), \forall z \leq p(\vec{x}) B(\vec{x}, z) \Rightarrow \Delta(\vec{x})$ is proved by the $\forall^{\leq} L$ rule by $\Gamma(\vec{x}), B(\vec{x}, s(\vec{x})) \Rightarrow \Delta(\vec{x})$. Then by IH, $\Gamma(\vec{x}), B(\vec{x}, s(\vec{x})) \rhd_{d} \Delta(\vec{x})$. Therefore, there exist a term $t(\vec{x})$, a formula $H(u, \vec{x}) \in \Pi_k(\Phi)$ and sequences of terms $E_0$ $E_1$, $G_0$, $G_1$ and $F(u)$ such that the conditions of the Definition \ref{t2-12} are provable in $\mathcal{B}$. Similar to the case 5, w.l.o.g. extend the length to $t'(\vec{x}) \geq t(\vec{x}, p(\vec{x}))$. Now define $t''(\vec{x})=t'(\vec{x})+1$, 
\[
H'(u, \vec{x})=
\begin{cases}
\bigwedge \Gamma(\vec{x}) \wedge s(\vec{x}) \leq p(\vec{x}) \wedge \forall z \leq p(\vec{x}) B(\vec{x}, z) & u=0\\
H(u, y, \vec{x}) & 0<u \leq t'(\vec{x})+1\\
\end{cases} 
\]
And finally, define $E'_0$ $E'_1$, $G'_0$, $G'_1$ and $F'(u)$ as sequences of terms that compute the universal quantifier $\forall z$ as $t(\vec{x})$. Since $t(\vec{x})$ is a terms, it is easy to check that this new sequence is the $(\Pi_k(\Phi), \mathcal{B})$-flow that we wanted.\\

6$'$. For the non-deterministic case, since $\mathcal{B} \vdash s(\vec{x}) \leq p(\vec{x}) \wedge \forall z \leq p(\vec{x}) B(\vec{x}, z) \rightarrow B(\vec{x}, s(\vec{x}))$, we have
\[
s(\vec{x}) \leq p(\vec{x}), \forall z \leq p(\vec{x}) B(\vec{x}, z) \rhd_n B(\vec{x}, s(\vec{x})).
\]
Since
\[
\Gamma(\vec{x}), B(\vec{x}, s(\vec{x})) \rhd_{n} \Delta(\vec{x}),
\]
by cut we have
\[
\Gamma(\vec{x}), s(\vec{x}) \leq p(\vec{x}), \forall z \leq p(\vec{x}) B(\vec{x}, s(\vec{x})) \rhd_{d} \Delta(\vec{x}).
\]
\\
7. (Bounded Existential Quantifier, Right). If $\Gamma(\vec{x}), s(\vec{x}) \leq p(\vec{x}) \Rightarrow \Delta(\vec{x}), \exists z \leq p(\vec{x}) B(\vec{x}, z)$ is proved by the $\exists^{\leq} R$ rule by $\Gamma(\vec{x}) \Rightarrow \Delta(\vec{x}), B(\vec{x}, s(\vec{x}))$ then $\neg B(\vec{x}, z) \in \Pi$. Therefore, by Lemma \ref{t2-26} $\Gamma(\vec{x}), \neg B(\vec{x}, s(\vec{x})) \Rightarrow \Delta(\vec{x})$. By 6, $\Gamma(\vec{x}), s(\vec{x}) \leq p(\vec{x}), \forall z \leq p(\vec{x}) \neg B(\vec{x}, z) \Rightarrow \Delta(\vec{x})$. Again by Lemma \ref{t2-26}, $\Gamma(\vec{x}), s(\vec{x}) \leq p(\vec{x}) \Rightarrow \Delta(\vec{x}), \exists z \leq p(\vec{x}) \neg \neg B(\vec{x}, z)$ which means $\Gamma(\vec{x}), s(\vec{x}) \leq p(\vec{x}) \Rightarrow \Delta(\vec{x}), \exists z \leq p(\vec{x}) B(\vec{x}, z)$. \\

8. (Bounded Existential Quantifier, Left). If $\Gamma, y \leq p(\vec{x}), B(\vec{x}, y) \rhd \Delta$ then since $\exists y \leq p(\vec{x}) B(\vec{x}, y) \in \Pi$, then $B$ has a negation in $\Pi$. By disjunction application $(\Gamma \wedge B(\vec{x}, y)) \vee \neg B(\vec{x}, y) \rhd \bigvee \Delta \vee \neg  B(\vec{x}, y)$. Since $\rhd B(\vec{x}, y) \vee \neg B(\vec{x}, y) $, then $\Gamma \rhd (\Gamma \wedge B(\vec{x}, y)) \vee \neg B(\vec{x}, y)$. Therefore, $\Gamma, y \leq p(\vec{x}) \rhd \bigvee \Delta \vee \neg B(\vec{x}, y)$. Now by 5, we have 
\[
\Gamma \rhd \Delta, \forall y \leq p(\vec{x}) \neg B(\vec{x}, y).
\]
By conjunction application
\[
\Gamma, \exists y \leq p(\vec{x}) B(\vec{x}, y) \rhd (\bigvee \Delta \vee \forall y \leq p(\vec{x}) \neg B(\vec{x}, y)) \wedge \exists y \leq p(\vec{x}) B(\vec{x}, y).
\]
But,
\[
\forall y \leq p(\vec{x}) \neg B(\vec{x}, y) \wedge \exists y \leq p(\vec{x}) B(\vec{x}, y) \rhd \bot.
\]
Hence,
\[
(\bigvee \Delta \vee \forall y \leq p(\vec{x}) \neg B(\vec{x}, y)) \wedge \exists y \leq p(\vec{x}) B(\vec{x}, y) \rhd \Delta.
\]
Therefore,
\[
\Gamma, \exists y \leq p(\vec{x}) B(\vec{x}, y) \rhd \Delta.
\]
\\
9. (Induction). It is proved in Lemma \ref{t2-28}.
\end{proof}
Using the soundness theorem we can show that any non-deterministic reduction and hence all non-deterministic flows can be simulated by term-length deterministic flows. Note that even when the length of a non-deterministic flow belongs to some term ideal of the language, then the length of the simulated deterministic flow exceeds all the terms in the term ideal and needs the whole power of terms.
\begin{thm}\label{t2-31}(Simulation)
Let $\mathcal{B}$ be a bounded theory of arithmetic. Then all non-deterministic reductions can be simulated by a term-length sequence of deterministic reductions. In other words, if $A(\vec{x}) , B(\vec{x}) \in \Pi_k(\Phi)$ and $A(\vec{x}) \leq_{n}^{(\mathbb{T}, \Pi_k(\Phi), \mathcal{B})} B(\vec{x})$, then $A(\vec{x}) \rhd_d^{(\mathbb{T}_{all}, \Pi_k(\Phi), \mathcal{B})} B(\vec{x})$.
\end{thm}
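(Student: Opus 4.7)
The plan is to unfold the assumption $A \leq_n^{(\mathbb{T}, \Pi_k(\Phi), \mathcal{B})} B$ via Definition \ref{t2-13} to the concrete provability claim $\mathcal{B} \vdash A(\vec{x}) \rightarrow B(\vec{x})$, and then to push the corresponding sequent derivation through the machinery already assembled, namely cut elimination followed by the Soundness Theorem \ref{t2-30}(i), which is built precisely to convert sequent proofs into deterministic flows.

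Concretely, I would begin with a sequent-calculus derivation of $A \Rightarrow B$ in $\mathcal{B}$ and apply Theorem \ref{t2-10} to obtain a free-cut-free version. Since both $A$ and $B$ already lie in $\Pi_k(\Phi)$, and this class enjoys the closure properties stipulated by Definitions \ref{t2-5} and \ref{t2-6} (subformulas, bounded quantifiers, $\mathcal{R}$-conjunctions, $\mathcal{R}$-disjunctions and $\mathcal{R}$-negations), Corollary \ref{t2-11}-style reasoning lets me arrange the derivation so that every essential formula appearing in it stays inside $\Pi_k(\Phi)$. I would then view this normalized derivation as a derivation in $\mathfrak{B}(\mathbb{T}_{all}, \Pi_k(\Phi), \mathcal{A})$, where $\mathcal{A}$ is the collection of quantifier-free axioms of $\mathcal{B}$ actually invoked: $\mathcal{A} \subseteq \mathcal{B}$ holds automatically, and the jump from $\mathbb{T}$ up to $\mathbb{T}_{all}$ is free because every term occurring in a proof is a fortiori a term of the language. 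Feeding this derivation into Theorem \ref{t2-30}(i) then delivers $A \rhd_d^{(\Pi_k(\Phi), \mathcal{B})} B$, the promised deterministic flow. Reading the statement as covering full non-deterministic flows rather than single reductions, as the paragraph preceding the theorem invites, the extension is routine: apply the single-reduction simulation to each inductive step $\mathcal{B} \vdash H(u, \vec{x}) \rightarrow H(u+1, \vec{x})$ of the given flow $(t, H)$, stitch the resulting sub-flows together using Strong Gluing (Lemma \ref{t2-24}(ii)), and close with the boundary equivalences of Definition \ref{t2-18}.

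The hard part will be the complexity bookkeeping between cut elimination and soundness: a free-cut-free derivation may still carry cuts on induction formulas native to $\mathcal{B}$, and in principle such formulas need not belong to $\Pi_k(\Phi)$. Establishing that the derivation can nonetheless be confined to $\mathbb{T}_{all}$ and $\Pi_k(\Phi)$, at the price of a flow length that is a genuine term of the language but one that — as the paragraph preceding the theorem explicitly warns — typically escapes the original ideal $\mathbb{T}$, is where the substantive work is concentrated, and is exactly what measures the cost of simulating non-deterministic contraction by deterministic means.
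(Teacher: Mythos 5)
Your proposal is correct and follows the paper's own argument exactly: unfold Definition~\ref{t2-13} to get $\mathcal{B} \vdash A(\vec{x}) \rightarrow B(\vec{x})$, then feed the resulting sequent derivation into the deterministic soundness theorem~\ref{t2-30}(i). The paper's proof is a two-sentence version of this; your additional remarks about cut elimination, confining formulas to $\Pi_k(\Phi)$, and the $\mathbb{T} \mapsto \mathbb{T}_{all}$ relaxation are simply the hypotheses of Theorem~\ref{t2-30}(i) spelled out, and your closing worry about induction formulas of $\mathcal{B}$ escaping $\Pi_k(\Phi)$ is a fair observation about an implicit assumption in the paper rather than a defect in your own argument.
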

\begin{proof}
If $A(\vec{x}) \leq_{n}^{\Pi} B(\vec{x})$ then $\mathcal{B} \vdash A(\vec{x}) \Rightarrow B(\vec{x})$. By deterministic soundness we have $A(\vec{x}) \rhd_d^{(\Pi_k(\Phi), \mathcal{B})} B(\vec{x})$.
\end{proof}

\begin{cor}\label{t2-32}
if $A(\vec{x}) , B(\vec{x}) \in \Pi_k(\Phi)$ and $A(\vec{x}) \rhd_{n}^{(\mathbb{T}, \Pi_k(\Phi), \mathcal{B})} B(\vec{x})$, then $A(\vec{x}) \rhd_d^{(\Pi_k(\Phi), \mathcal{B})} B(\vec{x})$. Therefore, the existence of a non-deterministic $(\mathbb{T}_{all}, \Pi_k(\Phi), \mathcal{B})$-flow is equivalent to the existence of a deterministic $(\Pi_k(\mathbb{T}_{all}), \mathcal{B})$-flow.
\end{cor}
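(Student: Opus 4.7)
The plan is to combine the Simulation Theorem (Theorem \ref{t2-31}) with Strong Gluing (Lemma \ref{t2-24} (ii)). Unpacking the hypothesis, a non-deterministic $(\mathbb{T}, \Pi_k(\Phi), \mathcal{B})$-flow from $A$ to $B$ supplies a term $t(\vec{x}) \in \mathbb{T}$ and a formula $H(u, \vec{x}) \in \Pi_k(\Phi)$ satisfying, provably in $\mathcal{B}$, the three conditions of Definition \ref{t2-18}: $H(0) \leftrightarrow A$, $H(t) \leftrightarrow B$, and $H(u) \to H(u+1)$ for $u < t(\vec{x})$.

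The crucial observation is that the inductive step $\mathcal{B} \vdash H(u, \vec{x}) \to H(u+1, \vec{x})$ is literally the non-deterministic reduction $H(u, \vec{x}) \leq_{n}^{\mathcal{B}} H(u+1, \vec{x})$, with $u$ treated as an additional free parameter alongside $\vec{x}$. I would then invoke the Simulation Theorem to convert this into a uniform deterministic flow $H(u, \vec{x}) \rhd_{d}^{(\Pi_k(\Phi), \mathcal{B})} H(u+1, \vec{x})$. The uniformity is the one point that needs care: it relies on the fact that Theorem \ref{t2-31} is established through the syntactic Soundness Theorem \ref{t2-30}, whose proof is insensitive to how the free variables are named, so the extra parameter $u$ causes no trouble.

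With the step-wise deterministic flow in hand, apply Strong Gluing (Lemma \ref{t2-24} (ii)), taking the role of $y$ to be $u$, the role of $A(y, \vec{x})$ to be $H(u, \vec{x})$, and the length to be $s = t(\vec{x})$; since the deterministic target term ideal is $\mathbb{T}_{all}$, the choice $s = t(\vec{x})$ is legitimate regardless of which sub-ideal $\mathbb{T}$ was used on the non-deterministic side. This yields $H(0, \vec{x}) \rhd_{d}^{(\Pi_k(\Phi), \mathcal{B})} H(t(\vec{x}), \vec{x})$. Finally, apply the Simulation Theorem to the two provable implications $A(\vec{x}) \to H(0, \vec{x})$ and $H(t(\vec{x}), \vec{x}) \to B(\vec{x})$ to obtain deterministic flows at each endpoint, and compose everything by Weak Gluing (Lemma \ref{t2-24} (i)) to conclude $A \rhd_d^{(\Pi_k(\Phi), \mathcal{B})} B$.

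The converse direction of the equivalence is immediate and requires no new machinery: every deterministic reduction proves the underlying implication in $\mathcal{B}$, so a deterministic flow $(t, H, E_0, E_1, G_0, G_1, F)$ directly yields a non-deterministic flow $(t, H)$ by forgetting the witnessing terms and reading the deterministic equivalences at the endpoints as provable biconditionals. I expect the only genuine obstacle in the whole argument to be the uniformity of Theorem \ref{t2-31} in the step parameter $u$, which as noted above is automatic from the syntactic character of the soundness proof.
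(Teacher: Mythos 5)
Your proof is correct and follows essentially the same route as the paper's: unpack the non-deterministic flow, apply Theorem \ref{t2-31} to each non-deterministic implication (the step $H(u)\to H(u+1)$ uniformly in $u$, and the two endpoint implications), then assemble with strong and weak gluing. You are slightly more careful than the paper in getting the direction of the endpoint implication $H(t(\vec{x}),\vec{x})\to B(\vec{x})$ right and in flagging the uniformity in the parameter $u$, which is indeed automatic from the syntactic soundness argument.
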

\begin{proof}
If $A(\vec{x}) \rhd_{n}^{(\mathbb{T}, \Pi_k(\Phi), \mathcal{B})} B(\vec{x})$, then by definition, there exist a term $t(\vec{x})$ and a formula $H(u, \vec{x}) \in \Pi_k(\Phi)$ such that $A(\vec{x}) \equiv_n H(0, \vec{x})$, $B(\vec{x}) \equiv_n H(t(\vec{x}), \vec{x})$ and $H(u, \vec{x}) \leq_n H(u+1, \vec{x})$. By the Theorem \ref{t2-31}, $A(\vec{x}) \rhd_d H(0, \vec{x})$, $B(\vec{x}) \rhd_d H(t(\vec{x}), \vec{x})$ and $H(u, \vec{x}) \rhd_d H(u+1, \vec{x})$. By strong gluing, $H(0, \vec{x}) \rhd_d H(t(\vec{x}), \vec{x})$ and therefore by gluing $A(\vec{x}) \rhd_d B(\vec{x})$.
\end{proof}
We also have the following completeness theorem:
\begin{thm}\label{t2-33}(Completeness)
\begin{itemize}
\item[$(i)$]
If $\Gamma(\vec{x}) \rhd^{(\Pi_k(\Phi), \mathcal{B})}_{d} \Delta(\vec{x})$ and $\mathcal{B} \subseteq \mathfrak{B}(\mathbb{T}_{all}, \Pi_k(\Phi), \mathcal{A})$, then $\mathfrak{B}(\mathbb{T}_{all}, \Pi_k(\Phi), \mathcal{A}) \vdash \Gamma(\vec{x}) \Rightarrow \Delta(\vec{x})$.
\item[$(ii)$]
If $\Gamma(\vec{x}) \rhd^{(\mathbb{T}, \Pi, \mathcal{B})}_{n} \Delta(\vec{x})$ and $\mathcal{B} \subseteq \mathfrak{B}(\mathbb{T}, \Pi, \mathcal{A})$, then $\mathfrak{B}(\mathbb{T}, \Pi, \mathcal{A}) \vdash \Gamma(\vec{x}) \Rightarrow \Delta(\vec{x})$.
\end{itemize}
\end{thm}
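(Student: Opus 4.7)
The plan is to realize the flow directly as a proof in the sequent calculus of $\mathfrak{B}$, the essential move being an application of the induction rule of $\mathfrak{B}$ to the step invariant $H$. Fix a flow from $\Gamma$ to $\Delta$ of length $t(\vec{x})$ with invariant $H(u, \vec{x})$; in case (i) we have $H \in \Pi_k(\Phi)$ and $t \in \mathbb{T}_{all}$, in case (ii) we have $H \in \Pi$ and $t \in \mathbb{T}$. I aim to derive $\mathfrak{B} \vdash \bigwedge \Gamma \Rightarrow \bigvee \Delta$, whence $\mathfrak{B} \vdash \Gamma \Rightarrow \Delta$ follows by propositional reasoning.

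First, I would convert each component of the flow into a sequent provable in $\mathcal{B}$. For the non-deterministic case this is immediate from Definitions \ref{t2-13} and \ref{t2-18}. For the deterministic case, a short induction on the quantifier rank $k$ of Definition \ref{t2-12} does the job: the witness $F_{k+1}$ instantiates the outer quantifier using the proviso $\mathcal{B} \vdash \vec{v} \leq \vec{q}(\vec{x}) \rightarrow F_{k+1}(\vec{x}, \vec{v}) \leq \vec{p}(\vec{x})$, and the inductive hypothesis converts the inner reduction into the required implication. Because $\mathcal{B} \subseteq \mathfrak{B}$, we thereby obtain $\mathfrak{B}$-provable sequents
\[
\bigwedge \Gamma \Rightarrow H(0, \vec{x}), \qquad H(u, \vec{x}) \Rightarrow H(u+1, \vec{x}), \qquad H(t(\vec{x}), \vec{x}) \Rightarrow \bigvee \Delta.
\]

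Second, I would apply the induction rule of $\mathfrak{B}$ to the step sequent. Since $H$ lies in the induction class of $\mathfrak{B}$ and $t$ in its term ideal, this is a legitimate instance and yields $\mathfrak{B} \vdash H(0, \vec{x}) \Rightarrow H(t(\vec{x}), \vec{x})$. Two cuts against the endpoint sequents then finish the derivation.

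The step I expect to require the most care is the unfolding of a deterministic reduction $A \leq_d^{F, k} B$ into a concrete $\mathcal{B}$-proof of $A \Rightarrow B$: one must organise the $\forall L$ and $\exists R$ applications so that the witness term $F_{k+1}$ is introduced in the correct order and so that each bound clause $F_{k+1} \leq \vec{p}$ can be discharged against the assumed bound $\vec{v} \leq \vec{q}$. This is routine but is essentially the mirror image of the soundness argument, and once it is in place the remainder of the completeness proof is a two-line chain of cut and induction.
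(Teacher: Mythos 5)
Your proposal is correct and follows essentially the same route as the paper: convert the flow conditions into $\mathfrak{B}$-provable sequents (via $\mathcal{B} \subseteq \mathfrak{B}$), apply the induction rule to the step invariant $H$ since $H$ lies in the induction class and $t$ in the term ideal, and cut against the endpoint equivalences. The paper only writes out the non-deterministic case and dismisses the deterministic one as ``very similar''; your extra remark on unfolding a deterministic reduction $A \leq_d^{F,k} B$ into a $\mathcal{B}$-proof of $A \Rightarrow B$ by induction on $k$ is exactly the detail being elided there, and your treatment of it is sound.
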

\begin{proof}
For $(ii)$, if $\Gamma(\vec{x}) \rhd^{(\mathbb{T}, \Pi, \mathcal{B})}_{n} \Delta(\vec{x})$, then by Definition \ref{t2-13}, there exist a term $t(\vec{x}) \in \mathbb{T}$, and a formula $H(u, \vec{x}) \in \Pi$ such that we have the following:
\begin{itemize}
\item[$(i)$]
$\mathcal{B} \vdash H(0, \vec{x}) \leftrightarrow \bigwedge \Gamma(\vec{x})$,
\item[$(ii)$]
$\mathcal{B} \vdash H(t(x), \vec{x}) \leftrightarrow \bigvee \Delta(\vec{x})$,\\ and
\item[$(iii)$]
$\mathcal{B} \vdash H(u, \vec{x}) \rightarrow H(u+1, \vec{x}) $.
\end{itemize}
Since $\mathcal{B} \subseteq \mathfrak{B}(\mathbb{T}, \Pi, \mathcal{A})$, we have 
\[
\mathfrak{B}(\mathbb{T}, \Pi, \mathcal{A}) \vdash \forall u \leq t(\vec{x}) \; H(u, \vec{x}) \rightarrow H(u+1, \vec{x}).
\]
Since $H(u, \vec{x}) \in \Pi$ and $t \in \mathbb{T}$, by induction we have ,
\[
\mathfrak{B}(\mathbb{T}, \Pi, \mathcal{A}) \vdash H(0, \vec{x}) \rightarrow H(t(\vec{x}), \vec{x}).
\]
On the other hand, we have $\mathcal{B} \vdash H(0, \vec{x}) \leftrightarrow \bigwedge \Gamma(\vec{x})$ and $\mathcal{B} \vdash H(t(\vec{x}), \vec{x}) \leftrightarrow \bigvee \Delta(\vec{x})$. Therefore, $ \mathfrak{B}(\mathbb{T}, \Pi, \mathcal{A}) \vdash \Gamma(\vec{x}) \Rightarrow \Delta(\vec{x})$.\\
The proof of the deterministic case, i.e., the case $(i)$, is very similar. 
\end{proof}

\section{Applications on Bounded Theories}
In this section we will use the soundness theorems that we proved in the previous section to extract the computational content of the low complexity statements of some concrete weak bounded theories such as Buss's hierarchy of bounded theories of arithmetic and some strong theories such as $I\Delta_0+\EXP$ and $\PRA$. For the beginning, let us focus on the deterministic soundness theorem.
The first application is on the fragments of the theory $I\Delta_0$ which are related to the linear time hierarchy:
\begin{cor}\label{t3-1}
Let $\Gamma(\vec{x}) \cup \Delta(\vec{x}) \subseteq  \hat{U}_k$. Then, $I\hat{U}_k \vdash \Gamma(\vec{x}) \Rightarrow \Delta(\vec{x})$ iff $\Gamma \rhd_d^{(\hat{U}_k, \mathcal{R})} \Delta$.
\end{cor}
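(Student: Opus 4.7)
My plan is to derive this as a direct specialization of the abstract soundness (Theorem \ref{t2-30}) and completeness (Theorem \ref{t2-33}) theorems. The first step is purely a dictionary translation: I would identify $\hat{U}_k$ with $\Pi_k(\Phi)$ where $\Phi$ is the class of quantifier-free formulas of the ring-type language of Definition \ref{t2-1}, using the generic hierarchy of Definition \ref{t2-6}; and I would identify $I\hat{U}_k$ with the bounded arithmetic $\mathfrak{B}(\mathbb{T}_{all}, \hat{U}_k, \mathcal{R})$ from Definition \ref{t2-8}, viewing the (quantifier-free) axioms of $\mathcal{R}$ as the axiom set $\mathcal{A}$.

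Once this identification is in place, both directions of the biconditional reduce to instances of the main theorems. For the forward direction I would invoke Theorem \ref{t2-30}$(i)$ with $\mathcal{B} = \mathcal{R}$ and $\mathcal{A} = \mathcal{R}$: the side condition $\mathcal{A} \subseteq \mathcal{B}$ collapses to $\mathcal{R} \subseteq \mathcal{R}$, and the conclusion is exactly $\Gamma \rhd_d^{(\hat{U}_k,\mathcal{R})} \Delta$. For the converse I would invoke Theorem \ref{t2-33}$(i)$ with the same parameters: the side condition $\mathcal{R} \subseteq \mathfrak{B}(\mathbb{T}_{all}, \hat{U}_k, \mathcal{R})$ is automatic because the axioms of $\mathcal{R}$ are literally taken as axioms of $I\hat{U}_k$.

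I do not expect any substantive obstacle. The only routine checks are that $\hat{U}_k$, as it is usually defined (strict prenex formulas with $k$ alternations of bounded quantifiers starting with $\forall$ over a quantifier-free matrix), really does coincide with the class $\Pi_k(\Phi)$ produced by Definition \ref{t2-6} from the quantifier-free base, and that $\mathbb{T}_{all}$ satisfies the closure requirements of a $\mathcal{R}$-term ideal. Both are immediate from the syntactic definitions, so the corollary is essentially a packaging statement showing how the abstract framework of Section~2 specializes to a concrete, named bounded theory.
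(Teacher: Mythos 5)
Your proposal is correct and matches the paper's intended argument: the paper gives no explicit proof of Corollary~\ref{t3-1}, but the one-line proof of the parallel Corollary~\ref{t3-2} (``it is enough to know that $T^k_n$ is axiomatizable by $\hat{\Pi}^b_k(\#_n)$-induction'') confirms that the intended route is exactly yours, namely to recognize $I\hat{U}_k$ as $\mathfrak{B}(\mathbb{T}_{all},\hat{U}_k,\mathcal{R})$ and then apply Theorems~\ref{t2-30}$(i)$ and~\ref{t2-33}$(i)$ with $\mathcal{A}=\mathcal{B}=\mathcal{R}$.
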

The second application, and maybe the more important one, is the case of Buss's hierarchy of bounded arithmetic. 
\begin{cor}\label{t3-2}
Let $\Gamma(\vec{x}) \cup \Delta(\vec{x}) \subseteq  \hat{\Pi}_{k}^b(\#_n)$. Then, $T^k_n \vdash \Gamma(\vec{x}) \Rightarrow \Delta(\vec{x})$ iff $\Gamma \rhd_d^{(\hat{\Pi}_{k}^b(\#_n), \PV(\#_n))} \Delta$. Specifically, for $n=2$, $T^k_2 \vdash \Gamma(\vec{x}) \Rightarrow \Delta(\vec{x})$ iff $\Gamma \rhd_d^{(\hat{\Pi}_{k}^b, \PV)} \Delta$.
\end{cor}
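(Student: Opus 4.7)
The plan is to obtain Corollary \ref{t3-2} as a direct instantiation of Theorem \ref{t2-30}(i) (deterministic Soundness) and Theorem \ref{t2-33}(i) (deterministic Completeness). Concretely, I would identify $T^k_n$ with the generic bounded theory $\mathfrak{B}(\mathbb{T}_{all}, \hat{\Pi}_{k}^b(\#_n), \BASIC_n)$ in the language of $\PV(\#_n)$, and take $\mathcal{B} = \PV(\#_n)$ as the base theory used to interpret reductions and flows. With this dictionary in place, the two directions of the biconditional are nothing more than the two theorems specialized to these parameters; the $n=2$ clause is the literal specialization, since $\PV(\#_2)=\PV$ in the standard convention.

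Before invoking the general theorems I would verify the hypotheses. First, $\hat{\Pi}_k^b(\#_n)$ has the form $\Pi_k(\Phi)$ for the class $\Phi$ of quantifier-free formulas in the language of $\PV(\#_n)$, which is closed under Boolean operations and under the $\mathcal{R}$-dualities demanded in Definition \ref{t2-5}, so it is a $\pi$-class. Second, $\BASIC_n$ consists of quantifier-free axioms derivable in $\PV(\#_n)$, so the condition $\mathcal{A}\subseteq \mathcal{B}$ of soundness holds. Third, $\PV(\#_n)$ is a universal theory whose function symbols and defining equations are already available inside $T^k_n$ (via the standard simulation of $\PV(\#_n)$ in $S^1_n\subseteq T^k_n$), so the condition $\mathcal{B}\subseteq \mathfrak{B}(\mathbb{T}_{all},\hat{\Pi}_k^b(\#_n),\BASIC_n)$ required by completeness is met. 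Finally, $\PV(\#_n)$ has a function symbol for the characteristic function of every quantifier-free formula, so Lemma \ref{t2-25} applies and all auxiliary lemmas of Section 2 (notably the contraction case of Lemma \ref{t2-27}) really do hold in this setting.

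The forward direction $T^k_n\vdash \Gamma\Rightarrow \Delta \;\Longrightarrow\; \Gamma \rhd_d^{(\hat{\Pi}_k^b(\#_n),\PV(\#_n))}\Delta$ is then the statement of Theorem \ref{t2-30}(i), and the converse is the statement of Theorem \ref{t2-33}(i), for the parameters just described. The main obstacle I expect is the verification of the first bullet: one must spell out that $\hat{\Pi}_k^b(\#_n)$ genuinely matches the abstract notion of a $\pi$-class in Definition \ref{t2-5}, and in particular that negations of the $\hat{\Sigma}_j^b(\#_n)$-subformulas appearing in a given $\hat{\Pi}_k^b(\#_n)$-formula can be realized, up to $\mathcal{R}$-provable equivalence, at the correct level of the hierarchy. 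This is routine for the Buss hierarchies via De Morgan, but it is the one place where the abstract machinery of Section 2 meets the concrete syntax of $T^k_n$, and so deserves careful attention to make the two invocations above watertight.
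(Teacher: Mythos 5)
Your proposal is correct and takes the same route as the paper, whose one-line proof rests entirely on the observation that $T^k_n$ is axiomatizable by $\hat{\Pi}_k^b(\#_n)$-induction, i.e.\ $T^k_n = \mathfrak{B}(\mathbb{T}_{all}, \hat{\Pi}_k^b(\#_n), \BASIC_n)$, after which Theorems~\ref{t2-30}(i) and~\ref{t2-33}(i) apply verbatim with $\mathcal{B} = \PV(\#_n)$. The hypothesis checks you carry out (that $\hat{\Pi}_k^b(\#_n)$ is of the form $\Pi_k(\Phi)$ for the quantifier-free class $\Phi$, that $\BASIC_n \subseteq \PV(\#_n) \subseteq T^k_n$, and that $\PV(\#_n)$ has characteristic terms for quantifier-free formulas so that Lemma~\ref{t2-25} is available) are exactly the implicit background the paper leaves unstated.
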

\begin{proof}
Note that it is enough to know that $T^k_n$ is axiomatizable by $\hat{\Pi}_{k}^b(\#_n)$-induction.
\end{proof}
And also we can apply the soundness theorem on stronger theories with full exponentiation like $I\Delta_0+\EXP$. Consider the theory $\mathcal{R}$ augmented with a function symbol for exponentiation with the usual recursive definition and denote it by $\mathcal{R}(exp)$. Then:  
\begin{cor}\label{t3-3}
Let $\Gamma(\vec{x}) \cup \Delta(\vec{x}) \subseteq  \Pi^b_k(open)$. Then, $I\Delta_0+\EXP \vdash \Gamma(\vec{x}) \Rightarrow \Delta(\vec{x})$ iff $\Gamma \rhd_d^{(\Pi^b_k(open), \mathcal{R}(exp))} \Delta$. 
\end{cor}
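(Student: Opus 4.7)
The plan is to apply the template already established in Corollaries~\ref{t3-1} and~\ref{t3-2}: identify the theory $I\Delta_0+\EXP$ as an instance of the general schema $\mathfrak{B}(\mathbb{T},\Phi,\mathcal{A})$ and then invoke the Soundness Theorem~\ref{t2-30} and Completeness Theorem~\ref{t2-33}. The key observation is that $I\Delta_0+\EXP$ is axiomatized over $\mathcal{R}(exp)$ by induction on the bounded formulas of the language with exponentiation, and these formulas stratify by alternation depth as $\Delta_0(exp)=\bigcup_k \Pi^b_k(open)$. Hence for every $k$ the fragment $\mathfrak{B}(\mathbb{T}_{all},\Pi^b_k(open),\mathcal{R}(exp))$ is a subtheory of $I\Delta_0+\EXP$, and the union of these fragments over $k$ recovers the full theory.

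For the easy (backward) direction, suppose $\Gamma\rhd_d^{(\Pi^b_k(open),\mathcal{R}(exp))}\Delta$. Applying Theorem~\ref{t2-33}(i) with $\mathbb{T}=\mathbb{T}_{all}$ and $\Phi=\Pi^b_k(open)$ yields a proof of $\Gamma\Rightarrow\Delta$ in $\mathfrak{B}(\mathbb{T}_{all},\Pi^b_k(open),\mathcal{R}(exp))$; since that theory embeds into $I\Delta_0+\EXP$, we immediately conclude $I\Delta_0+\EXP\vdash \Gamma\Rightarrow\Delta$.

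For the forward direction, assume $I\Delta_0+\EXP\vdash \Gamma\Rightarrow\Delta$. First I would apply cut elimination (Theorem~\ref{t2-10}) together with Corollary~\ref{t2-11}, viewing $I\Delta_0+\EXP$ as $\mathfrak{B}(\mathbb{T}_{all},\Delta_0(exp),\mathcal{R}(exp))$, to get a free-cut free proof whose formulas all lie in $\Delta_0(exp)$. Since the proof is finite, only finitely many induction formulas occur in it, and they all belong to $\Pi^b_j(open)$ for some $j\geq k$. Viewing the proof as taking place in $\mathfrak{B}(\mathbb{T}_{all},\Pi^b_j(open),\mathcal{R}(exp))$ and invoking Theorem~\ref{t2-30}(i) with $\mathcal{A}=\mathcal{B}=\mathcal{R}(exp)$ then produces a deterministic flow.

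The main obstacle, and essentially the only non-bureaucratic step, is aligning the induction level $j$ delivered by the above reasoning with the end-sequent level $k$ required by the statement: a priori one only obtains a $\Pi^b_j(open)$-flow, whereas the corollary asks for a $\Pi^b_k(open)$-flow. I expect the paper to close this gap either by a conservativity/hierarchy-collapse argument reducing $\Pi^b_k(open)$-consequences of $I\Delta_0+\EXP$ to $\Pi^b_k(open)$-induction, or by implicitly letting the flow-level index absorb the induction level of the given proof. Beyond this complexity-matching step, the verification is a direct translation of the general machinery of Section~2 and is wholly parallel to Corollaries~\ref{t3-1} and~\ref{t3-2}.
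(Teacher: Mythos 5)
Your strategy mirrors the unstated template behind Corollaries~\ref{t3-1} and~\ref{t3-2}: identify the theory as an instance of $\mathfrak{B}(\mathbb{T}_{all},\Pi_k(\Phi),\mathcal{A})$ and invoke Theorems~\ref{t2-30} and~\ref{t2-33}. The backward direction you give is correct, and your identification of the key difficulty in the forward direction --- that the induction level $j$ encountered in a free-cut free proof need not coincide with the end-sequent level $k$ --- is exactly the right thing to worry about. The paper itself supplies no proof for this corollary; by analogy with the one-line justification of Corollary~\ref{t3-2} (``it is enough to know that $T^k_n$ is axiomatizable by $\hat{\Pi}^b_k(\#_n)$-induction''), the implicit claim being relied on here is that, in the presence of the exponentiation symbol, $I\Delta_0+\EXP$ is axiomatizable over $\mathcal{R}(exp)$ by $\Pi^b_k(open)$-induction for each fixed $k$. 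That is precisely your first alternative. Your second alternative --- letting the flow's complexity index float up to $j$ --- would only yield a flow at \emph{some} level and does not deliver the statement as written, which pins $k$ by the end-sequent.

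What your outline therefore leaves unresolved is the collapse of the bounded hierarchy over the exp-language: with $exp$ available, a block of existential witnesses can be packaged as a single bounded sequence and exchanged past the preceding universal block, so every $\Pi^b_j(open)$-formula (hence every $\Pi^b_j(open)$-induction instance) reduces to $\Pi^b_k(open)$. This needs to be stated and used; without it the reduction from $j$ to $k$ does not happen. There is also a subtlety you should flag rather than leave implicit: for the output flow to live at level $k$ over the base $\mathcal{B}=\mathcal{R}(exp)$, the collapse equivalences must be available in the reduction steps, and the coding direction of the sequence trick is a bounded-collection argument that does not obviously come for free from the quantifier-free, induction-free base $\mathcal{R}(exp)$ alone. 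Determining where exactly these equivalences are discharged --- inside $\mathcal{R}(exp)$ or inside the induction of the ambient flow --- is the one genuinely non-bureaucratic step your proposal does not settle, and it is also the step the paper silently takes for granted.
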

We can also use the theory of flows to extract the computational content of low complexity sentences of the very strong theories of arithmetic like $I\Sigma_n$ and $\PA+\TI(\alpha)$. But this is not what we can implement in a very direct way. The reason is that our method is tailored for bounded theories while these theories are unbounded. Hence, to use our theory, we have to find a way to transfer low complexity statements from these theories to some corresponding bounded theories. This is what the continuous cut elimination method makes possible in a very elegant way. It transfers all $\Pi^0_2$ consequences of a strong theory $T$ to some quantifier-free extensions of $\PRA$ and then makes it possible to apply the flow decomposition technique. To explain how it works, we need some definitions: 
\begin{dfn}\label{t3-4}
\begin{itemize}
\item[$(i)$]
An ordered structure $(X, \prec_X)$ is called $ \mathcal{B}$-representable when there exists a relation $\prec \; \in \mathcal{L}_{\mathcal{B}}$ defined by a quantifier-free formula such that:
\begin{itemize}
\item[$(i)$]
The order type of $\prec$ equals $\prec_X$.
\item[$(ii)$]
$\mathcal{B}$ proves the axioms of discrete ordered structures for the language.  
\end{itemize}
\item[$(ii)$]
An ordered structure $(X, \prec_X, +_X, \cdot_X, -_X, \lfloor \frac{\cdot}{\cdot} \rfloor_X, 0_X, 1_X) $ is called $\mathcal{B}$-representable when there exists a quantifier-free relation $\prec \; \in \mathcal{L}_{\mathcal{B}}$ and $\mathcal{L}_{\mathcal{B}}$-terms $+, \cdot, -, \lfloor \frac{\cdot}{\cdot} \rfloor : \mathbb{N} \times \mathbb{N} \to \mathbb{N}$ and constants $0, 1 \in \mathbb{N}$ such that:
\begin{itemize}
\item[$(i)$]
The order type of $\prec$ equals $\prec_X$.
\item[$(ii)$]
$\mathcal{B}$ proves the axioms of discrete ordered semi-rings for the language without the commutativity of addition and the axioms which state that $\prec$ preserves under left addition and left multiplication by a non-zero element.  
\end{itemize}
\end{itemize}
\end{dfn}
In this paper we are mainly interested in the cases that $\mathcal{B}=\PV$ or $\mathcal{B}=\PRA$, i.e., the case of polytime representability and the case of primitive recursive representability.
\begin{dfn}\label{t3-5}
Let $\prec$ be a quantifier-free formula in the language of $\PRA$. By theory $\PRA+\PRWO(\prec)$ we mean $\PRA$ plus the axiom schema $\PRWO(\prec)$ which states $\forall \vec{x} \exists y \; f(\vec{x}, y+1) \nprec f(\vec{x}, y) $ for any function symbol $f$.
\end{dfn}
The following theory is the skolemization of $\PRA+\PRWO(\prec)$:
\begin{dfn}\label{t3-6}
The language of the theory $\PRA_{\prec}$ consists of the language of $\PRA$ plus the scheme which says that for any $\PRA$-function symbol $f(\vec{x}, y)$, there exists a function symbol $[\mu y. f](\vec{x})$. Then $\BASIC_{\prec}$ is the theory axiomatized by the axioms of $\PRA$ and $\mathcal{R}$ and the following definitional equations: $f(\vec{x}, 1+[\mu y. f](\vec{x})) \nprec f(\vec{x}, \mu y. f](\vec{x}))$ and $z < [\mu y. f](\vec{x}) \rightarrow f(\vec{x}, z+1) \prec f(\vec{x}, z)$. Finally, $\PRA_{\prec}$ is $\BASIC_{\prec}$ plus the usual induction rule. 
\end{dfn}
We are ready to define $\Pi^0_2$-proof theoretical ordinal of a theory.
\begin{dfn}\label{t3-7}
Let $T$ be a theory of arithmetic. We say that $\alpha$ is a $\Pi^0_2$-proof theoretical ordinal of $T$ when $(\alpha, \prec_{\alpha})$ is $ \PRA$-representable by $\prec$ and $T \equiv_{\Pi^0_2} \PRA+\PRWO(\prec)$.
\end{dfn}
As we have mentioned before, using the continuous cut elimination technique, we can compute the $\Pi^0_2$-ordinal of some specific theories. (See \cite{Po} for the sketch of the proof for $\PA+\TI(\alpha)$. The rest is similar.)
\begin{thm}\label{t3-8}(Continuous Cut Elimination)
\begin{itemize}
\item[$(i)$]
The $\Pi^0_2$-ordinal of $I\Sigma_1$ is $\omega^2$.
\item[$(ii)$]
For $n > 1$, the $\Pi^0_2$-ordinal of $I\Sigma_n$ is $\omega_n$.
\item[$(iii)$]
The $\Pi^0_2$-ordinal of $\PA$ is $\epsilon_0$.
\item[$(iv)$]
For any $\PRA$-representable ordinal $\epsilon_0 \prec \alpha$, the $\Pi^0_2$-ordinal of $\PA+\TI(\alpha)$ is $\alpha$.
\end{itemize}

\end{thm}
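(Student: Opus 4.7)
The plan is to invoke the standard machinery of ordinal analysis via infinitary proof theory, which is why the author refers out to \cite{Po}. First I would embed each theory $T$ from the list into a Tait-style infinitary sequent calculus equipped with the $\omega$-rule, assigning every derivation a pair $(\beta, r)$ where $\beta$ bounds the derivation height and $r$ bounds the cut-rank. The embedding is routine: $I\Sigma_1$ embeds with heights below $\omega \cdot \omega$ and finite cut-rank; $I\Sigma_n$ embeds with heights below $\omega \cdot k$ and cut-rank $n+c$; $\PA$ embeds with heights below $\omega^2$ and finite (but unbounded) cut-rank; and for $\PA+\TI(\alpha)$ one adds a single inference encoding the transfinite induction axiom whose assigned height is controlled by $\alpha$ itself.

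Second, I would apply the predicative cut elimination lemma in its standard form: one round of cut elimination reducing the cut-rank by one sends height $\beta$ to $\omega^\beta$. Iterating gives, for each $T$, a cut-free infinitary derivation of any $\Pi^0_2$-theorem whose height is bounded by the claimed ordinal ($\omega^2$ for $I\Sigma_1$, $\omega_n$ for $I\Sigma_n$, $\epsilon_0$ for $\PA$, and $\alpha$ for $\PA+\TI(\alpha)$). From a cut-free infinitary derivation of $\forall x \exists y \; \varphi(x,y)$ I would then extract a primitive recursive search procedure along the representation $\prec$: recurse through the derivation tree with ordinal labels strictly descending in $\prec$ until an existential witness is reached. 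This procedure is formalizable in $\PRA+\PRWO(\prec)$, giving the inclusion $T \subseteq_{\Pi^0_2} \PRA+\PRWO(\prec)$ and thus the upper bound on the $\Pi^0_2$-ordinal. For the converse inclusion I would invoke the Gentzen-style proof that $T$ itself verifies $\PRWO(\prec)$ for the relevant notation: this is Gentzen's original result for $\PA$ and $\prec_{\epsilon_0}$, its obvious truncation for $I\Sigma_n$ with $\prec_{\omega_n}$, and an axiomatic triviality for $\PA+\TI(\alpha)$.

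The main obstacle is not any single step but the careful ordinal bookkeeping, specifically verifying the matching lower bounds: for each $\beta$ below the claimed ordinal one must exhibit a $\Pi^0_2$-consequence of $T$ whose witnessing function requires a $\prec$-descent of length at least $\beta$. This is achieved by internalizing the cut-elimination procedure inside $\PRA+\PRWO(\prec)$ and producing Kreisel-type explicit bounding sequences, which is precisely the content of the argument sketched in \cite{Po} for $\PA+\TI(\alpha)$. The other three cases are variants of this same argument with the ordinal level adjusted accordingly, so I would present the $\PA+\TI(\alpha)$ case in reference form and note that the same method, restricted to the appropriate initial segment of the ordinal notation, delivers the remaining clauses.
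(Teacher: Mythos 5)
The paper itself offers no proof for Theorem \ref{t3-8}; it states the four clauses and refers to \cite{Po} for the $\PA+\TI(\alpha)$ case, calling the others similar. Your outline follows the same ordinal-analytic route the cited reference would take, so the general strategy is not in dispute, but two concrete points require repair before this is a proof.

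First, your embedding parameters are incompatible with the cut-reduction step you quote. If $I\Sigma_1$ embeds with heights below $\omega\cdot\omega$ and positive finite cut-rank, then one pass of the reduction $\beta\mapsto\omega^\beta$ already pushes the bound to $\omega^{\omega^2}$, far above the claimed $\omega^2$; the same overshoot occurs for $I\Sigma_n$ with your ``$\omega\cdot k$, cut-rank $n+c$'' data, where $n+c$ passes yield roughly $\omega_{n+c}(\omega\cdot k)$ rather than $\omega_n$. The fragment cases need either a much sharper embedding (heights $\omega\cdot m$ for a fixed small $m$ with a fixed cut-rank tied to the induction class) or a boundedness lemma tailored to $\Sigma^0_n$-induction, not the generic reduction. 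Likewise, for $\PA+\TI(\alpha)$ your claim that the reduced height stays below $\alpha$ requires $\alpha$ to be closed under $\gamma\mapsto\omega^\gamma$, and the hypothesis $\epsilon_0\prec\alpha$ by itself does not give this.

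Second, the sentence ``this procedure is formalizable in $\PRA+\PRWO(\prec)$'' carries the entire $\Pi^0_2$-upper bound and is asserted without justification. That is exactly what the word ``continuous'' in the theorem's name is flagging: naive infinitary cut elimination transforms infinite well-founded trees, objects $\PRA$ cannot even quantify over. The Mints--Buchholz technique replaces infinitary derivations by primitive-recursive derivation codes and shows that the cut-elimination operator acts primitive-recursively (continuously) on these codes, so the descent through ordinal labels can be internalized as a primitive-recursive search whose termination is precisely an instance of $\PRWO(\prec)$. Without invoking this refinement your argument gives a bound in the metatheory but does not deliver the inclusion $T\subseteq_{\Pi^0_2}\PRA+\PRWO(\prec)$ that Definition \ref{t3-7} actually requires.
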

Now we are ready to have the following corollary:
\begin{cor}\label{t3-9}
Let $\Gamma(\vec{x}) \cup \Delta(\vec{x}) \subseteq  \Pi^b_k(open)$, and $\alpha_T$ is the $\Pi^0_2$-ordinal of $T$ with a $\PRA$-representation $\prec_{\alpha_T}$, then $T \vdash \Gamma(\vec{x}) \Rightarrow \Delta(\vec{x})$ iff 
\[
\Gamma \rhd_d^{(\Pi^b_k(open), \BASIC_{\prec_{\alpha_T}})} \Delta.
\]
\end{cor}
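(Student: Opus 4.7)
The plan is to reduce everything to the soundness and completeness theorems already established (Theorems 2.30 and 2.33), using the $\Pi^0_2$-ordinal apparatus to bridge the unbounded theory $T$ and the bounded theory $\mathfrak{B}(\mathbb{T}_{all}, \Pi^b_k(open), \BASIC_{\prec_{\alpha_T}})$. The central observation is that a sequent $\Gamma(\vec{x}) \Rightarrow \Delta(\vec{x})$ with $\Gamma \cup \Delta \subseteq \Pi^b_k(open)$ has, after universal closure, a bounded matrix, so the statement $\forall \vec{x}(\bigwedge \Gamma \to \bigvee \Delta)$ is $\Pi^0_1$ and in particular falls within the $\Pi^0_2$-conservativity provided by Definition \ref{t3-7} and Theorem \ref{t3-8}.

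For the forward direction, I would argue as follows. Assume $T \vdash \Gamma \Rightarrow \Delta$. By the $\Pi^0_2$-ordinal assumption, $\PRA + \PRWO(\prec_{\alpha_T}) \vdash \Gamma \Rightarrow \Delta$. Skolemizing the $\PRWO$ axioms by the $[\mu y.f]$ symbols (Definition \ref{t3-6}), the extension $\PRA_{\prec_{\alpha_T}}$ still proves the same sequent. Now $\PRA_{\prec_{\alpha_T}}$ is axiomatized by the quantifier-free axioms of $\BASIC_{\prec_{\alpha_T}}$ together with quantifier-free induction; since all quantifier-free formulas lie in $\Pi^b_k(open)$, this induction is a special case of $\Pi^b_k(open)$-induction, so $\mathfrak{B}(\mathbb{T}_{all}, \Pi^b_k(open), \BASIC_{\prec_{\alpha_T}}) \vdash \Gamma \Rightarrow \Delta$. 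Applying the deterministic soundness Theorem \ref{t2-30}(i) with $\mathcal{B} = \BASIC_{\prec_{\alpha_T}}$ yields $\Gamma \rhd_d^{(\Pi^b_k(open), \BASIC_{\prec_{\alpha_T}})} \Delta$.

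For the backward direction, suppose $\Gamma \rhd_d^{(\Pi^b_k(open), \BASIC_{\prec_{\alpha_T}})} \Delta$. By the completeness Theorem \ref{t2-33}(i), $\mathfrak{B}(\mathbb{T}_{all}, \Pi^b_k(open), \BASIC_{\prec_{\alpha_T}}) \vdash \Gamma \Rightarrow \Delta$. I then need to show that this bounded theory is $\Pi^0_1$-conservative over $\PRA_{\prec_{\alpha_T}}$. This relies on the classical fact that in the presence of primitive recursive (indeed, already $\Delta_0$-characteristic) functions, every bounded formula is equivalent to a quantifier-free one, so $\Pi^b_k(open)$-induction collapses to quantifier-free induction in the richer language. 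Hence $\PRA_{\prec_{\alpha_T}} \vdash \Gamma \Rightarrow \Delta$, and undoing the Skolemization gives $\PRA + \PRWO(\prec_{\alpha_T}) \vdash \Gamma \Rightarrow \Delta$. Since the universal closure of $\bigwedge \Gamma \to \bigvee \Delta$ is $\Pi^0_1$, the $\Pi^0_2$-equivalence of $T$ and $\PRA + \PRWO(\prec_{\alpha_T})$ supplied by Definition \ref{t3-7} transfers the proof back to $T$.

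The main obstacle is the conservativity step in the backward direction: verifying that $\Pi^b_k(open)$-induction over $\BASIC_{\prec_{\alpha_T}}$ does not prove any new $\Pi^0_1$ sentences beyond $\PRA_{\prec_{\alpha_T}}$. The subtlety is that $\BASIC_{\prec_{\alpha_T}}$ contains the Skolem symbols $[\mu y.f]$, so one must make sure bounded-quantifier elimination is available in this extended language — but since each $[\mu y.f]$ is itself a total function whose graph is quantifier-free definable in $\BASIC_{\prec_{\alpha_T}}$, the standard $\PRA$-style elimination of bounded quantifiers by primitive recursion (with the characteristic function of a $\Delta_0$ predicate built from the inner terms) goes through unchanged. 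All remaining steps are routine applications of conservativity, Skolemization, and the already-proved soundness/completeness results.
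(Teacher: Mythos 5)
Your proposal is correct and takes essentially the same route as the paper: both directions pass through $\PRA_{\prec_{\alpha_T}}$, using the soundness/completeness theorems to bridge the flow and the bounded theory $\mathfrak{B}(\mathbb{T}_{all}, \Pi^b_k(open), \BASIC_{\prec_{\alpha_T}})$, and then using the $\Pi^0_2$-ordinal definition together with the conservativity of $\PRA_{\prec_{\alpha_T}}$ over $\PRA + \PRWO(\prec_{\alpha_T})$ to move between $\PRA_{\prec_{\alpha_T}}$ and $T$. The paper's own proof is terser — it simply asserts that $\PRA_{\prec_{\alpha_T}}$ is axiomatizable by $\Pi^b_k(open)$-induction — while you correctly supply the underlying justification (collapse of bounded formulas to quantifier-free ones via primitive recursive characteristic functions, including for the $[\mu y.f]$ symbols).
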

\begin{proof}
Note that the existence of the flow is equivalent to the provability of $\Gamma \Rightarrow \Delta$ in $\PRA_{\prec_{\alpha_T}}$ because $\PRA_{\prec_{\alpha_T}}$ is a bounded theory axiomatizable by the usual induction on formulas in $\Pi^b_k(open)$. On the other hand, $\Gamma(\vec{x}) \cup \Delta(\vec{x}) \subseteq  \Pi^b_k(open)$, which means that the sequent is bounded and hence is in $\Pi^0_2$. Therefore, by the definition of $\Pi^0_2$-ordinal we know that $\PRA_{\prec_{\alpha_T}} \vdash \Gamma \Rightarrow \Delta$ iff $T \vdash \Gamma \Rightarrow \Delta$ and it completes the proof. 
\end{proof}
So far, we have used the theory of deterministic flows to decompose first order proofs of bounded theories. In the following we will introduce two different kinds of characterizations and we will use them to reprove some recent results for some specific classes of formulas. The types that we want to use are generalizations of some recent characterizations of some low complexity statements in Buss's hierarchy of bounded arithmetic by Game induction principles \cite{ST}, \cite{Ta} and some kind of PLS problems \cite{BB}.
\begin{dfn}\label{t3-10}
Fix a language $\mathcal{L}$. An instance of the $(j, k)$-game induction principle, $GI_k^j(\mathcal{L})$, is given by size parameters $a$ and $b$, a uniform sequence $G_0, \ldots, G_{a-1}$ of open (quantifier-free) relations, a term $V$ and a uniform sequence $W_0, \ldots, W_{a-2}$ of terms. The instance
$GI(G,V,W,a,b)$ states that, interpreting $G_0, \ldots, G_{a-1}$ as $k$-turn games in which all moves are bounded by $b$, the following cannot all
be true:
\begin{itemize}
\item[$(i)$]
Deciding the winner of game $G_0$ depends only on the first $j$ moves,
\item[$(ii)$]
Player $B$ can always win $G_0$ (expressed as a $\Pi_j(open)$ property.)
\item[$(iii)$]
For $i=0, \ldots, a-2$, $W_i$ gives a deterministic reduction of $G_{i+1}$ to $G_i$,
\item[$(iv)$]
$V$ is an explicit winning strategy for Player $A$ in $G_{a-1}$.
\end{itemize}
\end{dfn}
In the following theorem, denote $\Pi_i(\Phi)$ where $\Phi$ is the class of all quantifier-free formulas by $\Pi_i$ and do the similar thing for $\Sigma_i(\Phi)$. 
\begin{thm}\label{t3-11}
Let $j \leq k$. Then, 
\[
\forall \Sigma_j(open) (\mathfrak{B}(\mathbb{T}_{all}, \Pi_k(open), \mathcal{B})) \equiv^{\mathcal{B}} GI_k^j(\mathcal{L}).
\]
\end{thm}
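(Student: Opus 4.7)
The plan is to establish both directions of the equivalence by running the deterministic flow calculus of Section 2 in sync with the game-theoretic reading of $\Pi_k(open)$-formulas.

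For the inclusion from $\forall\Sigma_j(open)$-consequences of $\mathfrak{B}(\mathbb{T}_{all}, \Pi_k(open), \mathcal{B})$ into $\mathcal{B}+GI_k^j(\mathcal{L})$: suppose $\mathfrak{B}(\mathbb{T}_{all}, \Pi_k(open), \mathcal{B}) \vdash \forall\vec{x}\, A(\vec{x})$ with $A\in\Sigma_j(open)$. Cut elimination (Corollary \ref{t2-11}) confines the proof to $\Pi_k(open)$, and deterministic soundness (Theorem \ref{t2-30}(i)) then produces a deterministic $(\Pi_k(open),\mathcal{B})$-flow witnessing $\top \rhd_d A(\vec{x})$. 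Unpacking Definition \ref{t2-19}, this flow is a sequence $H(0,\vec{x}),H(1,\vec{x}),\dots,H(t(\vec{x}),\vec{x})$ of $\Pi_k(open)$-formulas, together with the term-definable deterministic reductions $F(u)$ linking successive stages and boundary data $E_0,E_1,G_0,G_1$. I would read each $H(u,\vec{x})$ as a $k$-turn game, the $F(u)$'s as the reductions $W_u$ of Definition \ref{t3-10}, the boundary at $u=0$ (where $H(0)\equiv\top$ is trivially $\Pi_j$) as furnishing items (i) and (ii), and the boundary at $u=t$ combined with a putative counterexample $\vec{x}_0$ to $A$ as supplying Player $A$'s explicit strategy $V$ for item (iv). Thus failure of $\forall\vec{x}\,A(\vec{x})$ over $\mathcal{B}$ converts into a counterexample to the corresponding instance of $GI_k^j$, so $\mathcal{B}+GI_k^j(\mathcal{L}) \vdash \forall\vec{x}\,A(\vec{x})$.

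For the reverse inclusion, I would show that each instance $GI(G,V,W,a,b)$ of $GI_k^j(\mathcal{L})$ is already a theorem of $\mathfrak{B}(\mathbb{T}_{all}, \Pi_k(open), \mathcal{B})$. Assume items (i)--(iv) and derive a contradiction inside $\mathfrak{B}$. Each $G_i$ is a $\Pi_k(open)$-game and each $W_i$ a deterministic reduction between consecutive games, so the data $(G_0,\dots,G_{a-1},V,W)$ is exactly a $(\Pi_k(open),\mathcal{B})$-flow spanning the two endpoints. Completeness (Theorem \ref{t2-33}(i)) turns this flow back into a derivation in $\mathfrak{B}(\mathbb{T}_{all},\Pi_k(open),\mathcal{B})$ propagating Player $A$'s winning strategy through the chain by $\Pi_k(open)$-induction on $i\leq a-1$; the length $a$ lies in $\mathbb{T}_{all}$ automatically. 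At the end of the propagation we obtain a winning $A$-strategy for $G_0$, which via the quantifier-free negation mechanism of Lemma \ref{t2-26} contradicts (ii)'s $\Pi_j(open)$-assertion that Player $B$ always wins $G_0$. Hence $\mathfrak{B}\vdash GI(G,V,W,a,b)$ for every instance.

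The main obstacle will be the fine-grained syntactic matching between the $\Pi_j(open)$-constraints baked into items (i) and (ii) of a $GI$-instance and the generic $\Pi_k(open)$-complexity of the intermediate formulas produced by the flow. In the forward direction, the initial stage $H(0)$ must be reinterpretable as a $\Pi_j$-game with a $\Pi_j$-winning condition for Player $B$; this is immediate from $H(0)\equiv\top$, but careful padding with trivial quantifiers may be needed to meet the exact formula template of Definition \ref{t3-10}. Dually, the backward direction must verify that the inductive propagation formula stays within $\Pi_k(open)$ so that $\mathfrak{B}$'s induction rule can fire, and that the composed game-reductions remain term-definable---both handled by the gluing lemma (Lemma \ref{t2-24}) and the structural and cut lemmas of Section 2. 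The coordination of the game-instance size $a$ with the flow's bounding term $t(\vec{x})$ is automatic since $\mathbb{T}_{all}$ contains all terms.
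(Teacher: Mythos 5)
Your strategy (flow decomposition via soundness, reading the stages as games, matching against $GI_k^j$) is the same as the paper's, but you run the flow in the wrong direction and this undermines the game-theoretic matching at the crucial point.

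You take a flow $\top \rhd_d A(\vec{x})$. There are two problems. First, Theorem \ref{t2-30}(i) requires both endpoints of the sequent to lie in $\Pi_k(open)$, but $A\in\Sigma_j(open)$ need not lie in $\Pi_k(open)$ when $j=k$. The paper instead works with the sequent $\neg A(\vec{x})\Rightarrow\bot$; then $\neg A\in\Pi_j(open)\subseteq\Pi_k(open)$ and $\bot\in\Pi_0$, so soundness gives $\neg A\rhd_d\bot$ unconditionally for all $j\leq k$.

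Second, and more fundamentally, the game instance you construct is uninformative. The $GI$ principle asserts that items (i)--(iv) of Definition \ref{t3-10} cannot all be true, so to extract content from it one must arrange matters so that three of the items are provably true in $\mathcal{B}$ and the item forced to fail carries the content of $A$. In the paper, the flow sequence is shifted so that $G_0$ is the game matrix of $\neg A(\vec{x})$ and $G_{a-1}$ is the game of $\bot$. Items (i), (iii), (iv) are then provable in $\mathcal{B}$ (item (iv) because Player $A$ trivially wins $\bot$, with $V$ supplied by the boundary terms of the flow), so $GI$ forces item (ii) to fail: Player $B$ does not win $G_0$, i.e.\ $\neg A(\vec{x})$ is false, i.e.\ $A(\vec{x})$ holds. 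With your orientation, $G_0=\top$ makes item (ii) vacuously true and hence useless, and the item you implicitly want to fail, namely (iv), concerns only whether the \emph{specific} term $V$ is a winning strategy for Player $A$ in the game of $A$. You have no candidate $V$ to plug in (the ``putative counterexample $\vec{x}_0$'' you invoke is not a term-definable object available to the construction), and even if some $V$ failed to be a winning strategy this would not entail that $A$ is true.

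The fix is exactly the paper's: run the flow from $\neg A$ to $\bot$, prepend $\neg A$ so that stage zero is literally $\neg A\in\Pi_j(open)$ (giving items (i) and (ii) their $\Pi_j$ form), and read $V$ and the $W_i$ off the flow's boundary and step terms. Your observation that the easy direction, $\mathfrak{B}(\mathbb{T}_{all},\Pi_k(open),\mathcal{B})\vdash GI_k^j(\mathcal{L})$, goes through by $\Pi_k(open)$-induction inside $\mathfrak{B}$ is correct; the paper states this direction without elaboration.
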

\begin{proof}
It is clear that $\mathfrak{B}(\mathbb{T}_{all}, \Pi_k(open), \mathcal{B}) \vdash GI_k^j(\mathcal{L})$. For the converse, assume $\mathfrak{B}(\mathbb{T}_{all}, \Pi_k(open), \mathcal{B}) \vdash \forall x A(x)$ where $A \in \Sigma_j(open)$ and $j \leq k$. Then, we know that $\mathfrak{B}(\mathbb{T}_{all}, \Pi_k(open), \mathcal{B}) \vdash \neg A(x) \Rightarrow \bot$ and $\neg A \in \Pi_j(open)$. By Corollary \ref{t3-2}, there exist a term $t(x)$, a formula $H(u, x) \in \Pi_k(open)$ and sequences of terms $E_0$, $E_1$, $I_0$, $I_1$ and $F(u)$ such that the following statements are provable in $\mathcal{B}$:
\begin{itemize}
\item[$(i)$]
$H(0, x) \equiv_{d}^{(E_0, E_1)} \neg A(x)$.
\item[$(ii)$]
$H(t(x), x)\equiv_{d}^{(I_0, I_1)} \bot$.
\item[$(iii)$]
$\forall u < t(x) H(u, x) \leq_{d}^{F_u} H(u+1, x)$.
\end{itemize} 
First of all, note that we can change the definition of $H$ in the following way: 
\[
H'(u, x)=(u=0 \rightarrow \neg A(x) ) \wedge (u \neq 0 \rightarrow H(u-1, x)).
\]
And, it is possible to shift also the reductions to have $(i)$ to $(iii)$ for $H'$. But note that the truth of $H'(0, x)$ depends only on first $j$ blocks of quantifiers when we write it in the strict $\Pi_j(open)$ form.\\ 

W.l.o.g., we assume that all bounds in $H'(u, x)$ are the same, say $s(x)$. Since $H'$ is strict, we have $H'(u, x)=\forall \vec{z}_1 \leq s \exists \vec{y}_1 \leq s \forall \vec{z}_2 \leq s \ldots G(u, \vec{z}_1, \vec{y}_1, \vec{z}_2, \ldots)$. Define $a=t(x)$, $b=s(x)$, $G_i$ as the game $G(i, \vec{z}_1, \vec{y}_1, \vec{z}_2, \ldots)$, $W_i=F'_i$ and $V=I'_0$. Therefore, we have an instance of the game induction. Now we want to show that $A(x)$ is reducible to this game induction provably in $\mathcal{B}$. Since $\mathcal{B} \vdash \forall u < t(x) H'(u, x) \leq_{d}^{F'_u} H'(u+1, x)$ and $H(t(x), x)\equiv_{d}^{(I_0, I_1)} \bot$, the false part is ``player B can always win the game $G_0$" which means that $H'(0, x)$ is false. Since $H'(0, x)$ is equivalent with $A$ provable in $\mathcal{B}$, the reduction of the sentence $A$ to the game induction principle is proved.
\end{proof}
Using this generalization it is trivial to reprove the case for Buss's hierarchy of bounded arithmetic:
\begin{cor}\label{t3-12}(\cite{ST}, \cite{Ta})
For all $j \leq k$, $\forall \hat{\Sigma}^b_j(T^k_2) \equiv GI_k^j$.
\end{cor}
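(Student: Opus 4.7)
My approach is to obtain this corollary as a direct specialization of Theorem \ref{t3-11}. By Corollary \ref{t3-2}, the theory $T^k_2$ coincides with the general bounded theory $\mathfrak{B}(\mathbb{T}_{all}, \hat{\Pi}^b_k, \PV)$, and in the $\PV$-language the strict class $\hat{\Pi}^b_k$ (i.e. $k$ alternating blocks of bounded quantifiers over a quantifier-free $\PV$-base) plays the role of the class $\Pi_k(open)$ appearing in Theorem \ref{t3-11}. With $\mathcal{B} = \PV$ and $\mathcal{L} = \mathcal{L}_{\PV}$, Theorem \ref{t3-11} then reads
\[
\forall \Sigma_j(open)(\mathfrak{B}(\mathbb{T}_{all}, \hat{\Pi}^b_k, \PV)) \equiv^{\PV} GI_k^j(\mathcal{L}_{\PV}),
\]
which is essentially the content of Corollary \ref{t3-12} once $\hat{\Sigma}^b_j$ is identified with $\Sigma_j(open)$ over $\PV$.

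Given this, my plan reduces to two bookkeeping tasks. First, verify the identification between the abstract $GI_k^j(\mathcal{L}_{\PV})$ of Definition \ref{t3-10} and the concrete game induction principle $GI_k^j$ of \cite{ST, Ta}: unwrapping Definition \ref{t3-10}, the sequence $G_0, \dots, G_{a-1}$ of $k$-turn games is $\PV$-uniform with polynomial move bound $b$ and polynomial length $a$, the reduction terms $W_i$ and the explicit strategy $V$ are $\PV$-functions, and the winner of $G_0$ depends on only the first $j$ moves, matching the $\hat{\Sigma}^b_j$ complexity of the target statement. This is precisely the Buss-style principle in the cited papers. Second, check the easy direction $GI_k^j \Rightarrow \forall \hat{\Sigma}^b_j(T^k_2)$ by carrying out an $\hat{\Pi}^b_k$-induction on ``Player $B$ wins $G_i$'' inside $T^k_2$, using the reductions $W_i$ for the step and the explicit strategy $V$ to contradict the induction conclusion at $i = a-1$.

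The only point that needs a moment's care is the matching of normal forms: the formula $H(u, x) \in \hat{\Pi}^b_k$ produced by the flow of Theorem \ref{t3-11} must have its outermost $j$ blocks identifiable with those of the target $\hat{\Sigma}^b_j$ statement. The device $H'(u, x) \equiv (u = 0 \to \neg A(x)) \wedge (u \neq 0 \to H(u-1, x))$ from the proof of Theorem \ref{t3-11} arranges exactly this, and a uniform polynomial move bound $b = s(x)$ can be chosen by closure of the polynomial term ideal under sums. No further combinatorial work is required: the substantive content of the corollary has already been absorbed into Theorem \ref{t3-11}, and Corollary \ref{t3-12} is its ``concrete shadow'' in the $\PV$/Buss setting.
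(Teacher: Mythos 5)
Your proposal is correct and takes essentially the same route as the paper: the paper presents Corollary \ref{t3-12} as an immediate instantiation of Theorem \ref{t3-11} with $\mathcal{B}=\PV$, $\mathcal{L}=\mathcal{L}_{\PV}$, and $\Pi_k(open)$ identified with $\hat{\Pi}^b_k$ (and hence $\Sigma_j(open)$ with $\hat{\Sigma}^b_j$), using Corollary \ref{t3-2} to recognize $T^k_2$ as $\mathfrak{B}(\mathbb{T}_{all},\hat{\Pi}^b_k,\PV)$. Your extra bookkeeping on the normal-form shift $H'(u,x)$ and the easy direction $GI_k^j \Rightarrow \forall\hat{\Sigma}^b_j(T^k_2)$ just spells out details the paper leaves implicit in the word ``trivial''.
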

Now, let us explain the second type of problems, i.e., the generalized local search problems:
\begin{dfn}\label{t3-13}
A formalized $(\Psi, \Lambda, \mathcal{B}, \prec, t)$-GLS problem consists of the following data:
\begin{itemize}
\item[$(i)$]
A term $N(x, s) \in \mathcal{L}_{\mathcal{B}}$ as local improvement.
\item[$(ii)$]
A term $c(x, s) \in \mathcal{L}_{\mathcal{B}}$ as cost function.
\item[$(iii)$]
A predicate $F(x, s) \in \Psi$ which intuitively means that $s$ is a feasible solution for the input $x$.
\item[$(iv)$]
An initial term $i(x) \in \mathcal{L}_{\mathcal{B}}$.
\item[$(v)$]
A goal predicate $G(x, s) \in \Lambda$.
\item[$(vi)$]
A quantifier-free predicate $\prec \in \mathcal{L}_{\mathcal{B}}$ as a well-ordering.
\item[$(vii)$]
A bounding term $t(x)$.
\end{itemize}
such that $\mathcal{B}$ proves that $\prec$ is a total order and
\[
\mathcal{B} \vdash \forall x \; F(x, i(x))
\]
\[
\mathcal{B} \vdash \forall xs \; (F(x, s) \rightarrow F(x, N(x, s)))
\]
\[
\mathcal{B} \vdash \forall xs \; (N(x, s)=s \vee c(x, N(x, s)) \prec c(x, s))
\]
\[
\mathcal{B} \vdash \forall xs \; (G(x, s) \leftrightarrow (N(x, s)=s \wedge F(x, s)))
\]
\[
\mathcal{B} \vdash\forall xs \; (G(x, s) \rightarrow s \leq t(x))
\]
for some term $t$.\\
Moreover, if $\mathcal{L}_{\PV} \subseteq \mathcal{L}_{\mathcal{B}}$ and $t(x)=2^{p(|x|)}$ for some polynomial $p$ we show the $\GLS$ problem by $\PLS(\Psi, \Lambda, \prec, \mathcal{B})$ and if $F$ is quantifier-free in the language of $\mathcal{B}$, $G$ is quantifier-free in the language of $\PV$ we show the $\GLS$ problem by $\PLS(\prec, \mathcal{B})$. Finally if $\mathcal{B}=\PV$, then we write $\PLS(\prec)$.
\end{dfn}
\begin{thm}\label{t3-14}
If $A \in \Pi_k(\Phi)$, then $\mathfrak{B}(\mathbb{T}_{all}, \Pi_{k+1}(\Phi), \mathcal{B}) \vdash \forall x \exists y \leq t(x) A(x, y) $ iff the search problem of finding $y$ by $x$ is reducible by a projection to an instance of a $\GLS(\Pi_k(\Phi), \{A\}, \mathcal{B}, \leq, t)$ provably in $\mathcal{B}$.
\end{thm}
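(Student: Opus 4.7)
The statement is an \emph{iff} which I plan to prove in two directions, each leveraging tools from the previous sections. The reverse direction formalizes the termination of local search via $\Pi_{k+1}(\Phi)$-induction inside the theory; the forward direction extracts the required $\GLS$ instance from the deterministic flow guaranteed by Corollary~\ref{t3-2}.

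For the reverse (``if'') direction, assume a projection term $\pi$ and a $\GLS(\Pi_k(\Phi), \{A\}, \mathcal{B}, \leq, t)$ instance $(N, c, F, i, G)$ with $\mathcal{B} \vdash G(x, s) \to A(x, \pi(s))$. Work in $\mathfrak{B}(\mathbb{T}_{all}, \Pi_{k+1}(\Phi), \mathcal{B})$, and define the iterates $s_u(x) = N^{(u)}(x, i(x))$ as a term in $\mathbb{T}_{all}$ using closure under recursion. The invariant
\[
\Theta(u) \equiv F(x, s_u) \,\wedge\, \forall v < u \,\bigl( N(x, s_v) = s_v \;\vee\; c(x, s_{v+1}) < c(x, s_v) \bigr)
\]
lives in $\Pi_{k+1}(\Phi)$ and is inductive by the GLS axioms. $\Pi_{k+1}(\Phi)$-induction up to $u = c(x, i(x)) + 1$, together with the pigeonhole fact that a strictly decreasing sequence of naturals cannot extend past length $c(x, i(x)) + 1$ below zero, forces a fixpoint $s_v = N(x, s_v)$; then $G(x, s_v)$, hence $A(x, \pi(s_v))$ with $\pi(s_v) \leq t(x)$ via the bounding axiom.

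For the forward (``only if'') direction, apply Corollary~\ref{t3-2} to the proof of $\forall x \exists y \leq t(x) A(x, y)$ to obtain a deterministic $(\Pi_{k+1}(\Phi), \mathcal{B})$-flow: a length term $T(x)$, a formula $H(u, x) \in \Pi_{k+1}(\Phi)$, and reduction-term sequences certifying $H(0, x) \equiv_d \top$, $H(T(x), x) \equiv_d \exists y \leq t(x) A(x, y)$, and $H(u, x) \leq_d H(u+1, x)$. Compose the flow's explicit witnessing terms along the chain $0 \to 1 \to \cdots \to T(x)$ to extract a single $\mathcal{L}_{\mathcal{B}}$-term $y_*(x)$ with $\mathcal{B} \vdash y_*(x) \leq t(x) \wedge A(x, y_*(x))$. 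Build the $\GLS$ with states $s = \langle u, y \rangle$ (with $u \leq T(x)$ and $y \leq t(x)$), initial state $\langle 0, 0 \rangle$, cost $T(x) - u$, and improvement that advances $u$ by one and updates $y$ via the flow's witness-chasing terms (freezing at $\langle T(x), y_*(x) \rangle$); set feasibility $F(x, \langle u, y \rangle) \equiv (u \leq T(x)) \wedge (u < T(x) \vee y = y_*(x))$, which is quantifier-free; goal $G(x, s) = A(x, \pi(s))$ with projection $\pi(\langle u, y \rangle) = y$; the equivalence $G \leftrightarrow (N = \mathrm{id} \wedge F)$ then follows from $\mathcal{B} \vdash A(x, y_*(x))$ by case analysis on $u$.

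The main obstacle is the complexity discipline of the goal predicate $G$ in the forward direction: Definition~\ref{t3-13} insists both that $G \in \{A\}$ and that $G$ be provably equivalent to the fixpoint-and-feasibility condition $N = \mathrm{id} \wedge F$. The trick is that, thanks to the purely term-level nature of deterministic reductions, the entire computational content of $H(T(x), x) \equiv_d \exists y \leq t(x) A(x, y)$ collapses into the single $\mathcal{L}_{\mathcal{B}}$-term $y_*(x)$ for which $\mathcal{B}$ proves $A(x, y_*(x))$ outright; the feasibility predicate $F$ can then pin the projected $y$ to $y_*(x)$ at $u = T(x)$, making the required equivalence provable in $\mathcal{B}$. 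A secondary subtlety is the bookkeeping for translating each flow step of Definition~\ref{t2-12} into a single $N$-step with the required cost decrease, which is routine but requires unwinding the $k+1$ alternations inside $H$.
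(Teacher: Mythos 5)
The forward direction of your proposal contains a fatal error. You claim that one can ``compose the flow's explicit witnessing terms along the chain $0 \to 1 \to \cdots \to T(x)$ to extract a single $\mathcal{L}_{\mathcal{B}}$-term $y_*(x)$ with $\mathcal{B} \vdash y_*(x) \leq t(x) \wedge A(x, y_*(x))$.'' This cannot work: the length $T(x)$ of the flow depends on $x$, so the number of compositions is not fixed, and there is no single term in $\mathcal{L}_{\mathcal{B}}$ representing an $x$-many-fold composition of the step terms. If such a $y_*(x)$ existed, the statement $\forall x\, \exists y \leq t(x)\, A(x,y)$ would already be provable in the universal base theory $\mathcal{B}$ with an explicit term witness, which would make the whole $\GLS$ characterization vacuous --- the entire point of the theorem (and of Corollary~\ref{t3-15} it generalizes) is precisely that the witness is \emph{not} term-definable over $\mathcal{B}$ but only reachable by an iterated local search. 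Your secondary ``trick'' paragraph is built on this collapse and falls with it.

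There is also a setup error in the same direction: you write the flow as going from $\top$ to $\exists y \leq t(x)\, A(x,y)$ with $H \in \Pi_{k+1}(\Phi)$, but $\exists y \leq t(x)\, A(x,y)$ lies in $\Sigma_{k+1}(\Phi)$, not $\Pi_{k+1}(\Phi)$, so the deterministic soundness theorem does not apply to that sequent directly. The paper instead passes to the contrapositive sequent $\forall y \leq t(x)\, \neg A(x,y) \Rightarrow \bot$, both sides of which are in $\Pi_{k+1}(\Phi)$, and obtains a flow $(H, s)$ with $H(0,x) \equiv_d \forall y \leq t(x)\, \neg A(x,y)$ and $H(s(x),x) \equiv_d \bot$. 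Writing $H(u,x) = \forall \vec{v}\, G(u,\vec{v},x)$ with $G \in \Sigma_k(\Phi)$, the $\GLS$ instance is built by walking \emph{backwards}: the state is $(u,\vec{v},z)$, feasibility is $\neg G(u-1,\vec{v})$ (a $\Pi_k(\Phi)$ condition), cost is $u$, the neighborhood function uses the reduction terms $F(u)$ to push a counterexample at level $u$ down to one at level $u-1$, the initial point comes from the term $\vec{U}$ refuting $H(s(x),x)$, and at $u=0$ the counterexample to $H(0,x)$ yields, via the reduction term $Z$, a $z \leq t(x)$ with $A(x,z)$. No composition into a single term is ever performed; the iteration lives in the local search, not in a term. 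Your reverse-direction sketch also leans on ``$\mathbb{T}_{all}$ closed under recursion'' to turn $N^{(u)}(x,i(x))$ into a term, which is not something $\mathbb{T}_{all}$ provides; in the theory one typically codes the run as a sequence and reasons about it, but the paper only supplies the forward direction explicitly, so this is a lesser concern than the term-collapse claim.
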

\begin{proof}
Assume 
\[
\mathfrak{B}(\mathbb{T}_{all}, \Pi_{k+1}(\Phi), \mathcal{B}) \vdash \forall x \exists y \leq t(x) A(x, y).
\]
Then, we know that $\forall y \leq t(x) \neg A(x, y) \Rightarrow \bot$ is provable in the theory. By soundness theorem \ref{t2-30}, there exist a term $s(x)$, a formula $H(u, x) \in \Pi_{k+1}(\Phi)$ and sequences of terms $E_0$, $E_1$, $G_0$, $G_1$ and $F(u)$ such that the following statements are provable in $\mathcal{B}$:
\begin{itemize}
\item[$(i)$]
$H(0, \vec{x}) \equiv_{d}^{(E_0, E_1)} \forall y \leq t(\vec{x}) \neg A(\vec{x}, y)$.
\item[$(ii)$]
$H(t(x), \vec{x})\equiv_{d}^{(G_0, G_1)} \bot$.
\item[$(iii)$]
$\forall u < t(x) \; H(u, \vec{x}) \leq_{d}^{F_u} H(u+1, \vec{x})$.
\end{itemize} 
Since $H \in \Pi_{k+1}(\Phi)$, we have $H(u, x)=\forall \vec{v} \leq \vec{r}(\vec{x}, u)G(u, \vec{v}, \vec{x})$ where $G(\vec{v}, u, x) \in \Sigma_k(\Phi)$. Use the deterministic reductions to show the existence of terms $U$, $V$ and $Z$ such that
\begin{itemize}
\item[$(i)$]
$\mathcal{B} \vdash A(Z(\vec{v}), x) \rightarrow G(0, \vec{v}, \vec{x})$.
\item[$(ii)$]
$\mathcal{B} \vdash G(t(x), \vec{U}, \vec{x} ) \rightarrow \bot$.
\item[$(iii)$]
$\mathcal{B} \vdash \forall u < t(x) G(u, \vec{V}(u, \vec{v}, \vec{x}), \vec{x}) \rightarrow G(u+1, \vec{v}, \vec{x})$.
\end{itemize} 
Now define 
\[
F(x, u, \vec{v}, z)=
\begin{cases}
\neg G(u-1, \vec{v}) & u > 0  \\
z \leq t \wedge A(z, x) & u=0 \\
\end{cases} 
\]
and
\[
N(x, u, \vec{v}, z)=
\begin{cases}
(u-1, \vec{V}(u, \vec{v}, \vec{x}), z) & u > 1  \\
(0, \vec{v}, Z(\vec{v})) & u=1 \\
(u, \vec{v}, z) & u=0 \\
\end{cases} 
\]
and $Goal(x, u, \vec{v}, z)=[z \leq t \wedge A(x, z)]$, $i(x)=(t(x), \vec{U}, 0)$, and $c(u, \vec{v})=u$. It is clear to see that this data is a $(\Pi_k(\Phi), \{A\}, \mathcal{B}, \leq, t) $-$\GLS$ problem. The answer to this problem is $(0, u, \vec{v}, z)$ where $A(z, x)$ holds. Note that by a projection we can extract $z$ from it which is the witness for $\exists y$ in $A$.
\end{proof}
Again we have the special case for Buss's hierarchy:
\begin{cor}\label{t3-15}(\cite{BB})
For all $l \leq k$, $\forall \Sigma_{l+1}^b(T^{k+1}_2) \equiv \PLS(\Pi^b_k, \Pi^b_l, \PV, \leq)$.
\end{cor}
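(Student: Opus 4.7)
The plan is to specialize Theorem~\ref{t3-14} to the Buss setting and use Corollary~\ref{t3-2} to identify $T^{k+1}_2$ with a flow-theoretic bounded system. First I would take $\Phi$ to be the class of quantifier-free $\mathcal{L}_{\PV}$-formulas and set $\mathcal{B} = \PV$, so that $\Pi_m(\Phi)$ is (equivalent to) $\Pi^b_m$ and $\Sigma_m(\Phi)$ is $\Sigma^b_m$. By Corollary~\ref{t3-2}, $T^{k+1}_2$ coincides with $\mathfrak{B}(\mathbb{T}_{all}, \Pi^b_{k+1}, \PV)$ on $\Pi^b_{k+1}$-sequents, so any $\forall \Sigma^b_{l+1}$-theorem $\forall x\, \exists y \leq t(x)\, A(x,y)$ of $T^{k+1}_2$ (with $A \in \Pi^b_l$) meets the hypothesis of Theorem~\ref{t3-14}: since $l \leq k$, one has $A \in \Pi^b_k$, and the induction complexity is exactly $\Pi^b_{k+1}$.

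Applying Theorem~\ref{t3-14} yields a projection reduction of the search problem to a $\GLS(\Pi^b_k, \{A\}, \PV, \leq, t)$-instance. I would then verify that this is a $\PLS(\Pi^b_k, \Pi^b_l, \PV, \leq)$-instance in the sense of Definition~\ref{t3-13}: the bound $t$ is a $\PV$-term, placing us in the polytime regime; from the construction in the proof of Theorem~\ref{t3-14} the feasibility $F$ is built from $\neg G$ with $G \in \Sigma^b_k$, hence $F \in \Pi^b_k$; the goal $z \leq t \wedge A(x,z)$ inherits the complexity of $A$ and lies in $\Pi^b_l$; and the cost order is $\leq$ on indices. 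For the converse I would verify that any $\PLS(\Pi^b_k, \Pi^b_l, \PV, \leq)$-instance is total provably in $T^{k+1}_2$ by a single $\Pi^b_{k+1}$-induction on the cost $c(x,\cdot)$ along the $N$-trajectory starting at $i(x)$: the induction formula ``$s$ is feasible and no $G$-witness has been reached'' is $\Pi^b_{k+1}$, and the defining axioms of the $\GLS$ force strict descent of $c$ until a fixed point, which must be a goal.

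The main obstacle is the complexity-bookkeeping in the forward direction. Theorem~\ref{t3-14} produces $F$ at the $\Pi^b_k$-level uniformly, but to land the \emph{goal} predicate in the sharper class $\Pi^b_l$ one must trace that the only $\Pi^b_l$-ingredient entering the construction is $A$ itself; all other components are quantifier-free $\PV$-predicates. Since $\Pi^b_l$ is closed under boolean combinations with open formulas and under $\PV$-substitution, $z \leq t \wedge A(x,z)$ stays in $\Pi^b_l$, which is the only place where the sharper index $l$ actively enters.
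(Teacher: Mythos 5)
Your proposal is correct and takes essentially the same route the paper intends: Corollary~\ref{t3-15} is just Theorem~\ref{t3-14} specialized to $\Phi = $ open $\mathcal{L}_{\PV}$-formulas and $\mathcal{B}=\PV$, combined with the observation that the feasibility predicate produced in that proof is (a case-split over) $\neg G$ with $G\in\Sigma^b_k$, hence in $\Pi^b_k$, while the goal is $z\leq t\wedge A(x,z)\in\Pi^b_l$, and a routine cost-descent induction of $\Sigma^b_{k+1}$/$\Pi^b_{k+1}$ complexity for the converse. The only cosmetic points worth noting: the hierarchy $\Pi_m(\Phi)$ built from open formulas yields the strict classes $\hat{\Pi}^b_m$ rather than $\Pi^b_m$ (though $T^{k+1}_2$ does not distinguish them for this purpose), and the induction formula in the converse is most naturally $\Sigma^b_{k+1}$ (an existential quantifier over the current feasible solution) rather than $\Pi^b_{k+1}$, which is immaterial since $T^{k+1}_2$ proves both schemes.
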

\begin{rem}
Note that the power of characterizations via these kinds of problems are more limited than the theory of flows'. The reason is that the game induction method relaxes the condition of provability of reductions and $\GLS$ problems unwind just one universal quantifier and put the rest into the feasibility predicate.
\end{rem}
Using this characterization by $\GLS$ problems, we can capture the class of total $\NP$ search problems in strong theories:
\begin{cor}\label{t3-16}
$\TFNP(I\Delta_0+\EXP)\equiv \PLS(\mathcal{R}(exp), \leq) $.
\end{cor}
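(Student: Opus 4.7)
The plan is to derive Corollary \ref{t3-16} as a direct specialization of Theorem \ref{t3-14}, exploiting the characterization of $I\Delta_0+\EXP$-provability in Corollary \ref{t3-3}, in exactly the way Corollary \ref{t3-15} was obtained from Theorem \ref{t3-14} in the Buss hierarchy setting. The two inclusions are treated separately.

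For the $(\subseteq)$ direction I would begin with a total $\NP$ search problem $\forall x\,\exists y \leq t(x)\,A(x,y)$ that is provable in $I\Delta_0+\EXP$, where $A$ is quantifier-free in $\mathcal{L}_{\PV}$ and $t(x) = 2^{p(|x|)}$ for some polynomial $p$. Applying free-cut elimination (Theorem \ref{t2-10}) one may confine the proof to $\Pi^b_k(open)$ for some $k$, and Corollary \ref{t3-3} then converts it into a deterministic $(\Pi^b_k(open), \mathcal{R}(exp))$-flow witnessing $\forall y \leq t(x)\,\neg A(x,y) \Rightarrow \bot$. Feeding this flow into the construction of Theorem \ref{t3-14} yields a $\GLS(\Pi^b_{k-1}(open), \{A\}, \mathcal{R}(exp), \leq, t)$ instance whose projection to the $y$-coordinate solves the search problem; because $A$ is already polytime and $t$ is elementary, this instance fits the format of $\PLS(\mathcal{R}(exp), \leq)$ from Definition \ref{t3-13}.

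For the $(\supseteq)$ direction I would verify that $I\Delta_0+\EXP$ proves the totality of every $\PLS(\mathcal{R}(exp), \leq)$ problem by induction on the cost along the walk $s_0 = i(x)$, $s_{n+1} = N(x, s_n)$: the cost strictly decreases until a goal is reached and is bounded above by $c(x, i(x))$, so the required induction is on a quantifier-free $\mathcal{L}_{\mathcal{R}(exp)}$-formula of elementary length, well within the reach of $I\Delta_0+\EXP$.

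The main obstacle is the bridging step in the $(\subseteq)$ direction: Theorem \ref{t3-14} produces feasibility in $\Pi^b_{k-1}(open)$, whereas Definition \ref{t3-13} requires the feasibility of a $\PLS(\mathcal{R}(exp), \leq)$ instance to be quantifier-free in $\mathcal{L}_{\mathcal{R}(exp)}$. This is precisely where the strength of $I\Delta_0+\EXP$ earns its keep: every bounded $\Delta_0$-formula is elementary and hence provably equivalent in $\mathcal{R}(exp)$, once the relevant elementary Skolem function symbols are adjoined, to a quantifier-free formula. Throughout this compression the goal predicate $G = A$ remains polytime because the projection only touches the $y$-coordinate while $A$ already lies in $\mathcal{L}_{\PV}$, so the final output is indeed a bona fide $\PLS(\mathcal{R}(exp), \leq)$ instance.
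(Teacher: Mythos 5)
Your overall strategy---specializing Theorem \ref{t3-14} to $I\Delta_0+\EXP$ via Corollary \ref{t3-3}, paralleling how Corollary \ref{t3-15} is obtained for the Buss hierarchy---is the intended route, and you are right that the delicate step is turning the $\Pi^b_{k-1}(open)$ feasibility predicate produced by Theorem \ref{t3-14} into a quantifier-free one. But the fix you offer does not actually close that gap. You propose adjoining ``the relevant elementary Skolem function symbols,'' yet the paper defines $\mathcal{R}(exp)$ as $\mathcal{R}$ augmented with \emph{only} a function symbol for exponentiation, and Definition \ref{t3-13} demands that $F$ be quantifier-free in $\mathcal{L}_{\mathcal{R}(exp)}$ for the label $\PLS(\mathcal{R}(exp), \leq)$ to apply. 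A Skolemized $F$ lives in a strictly larger language, so what your construction produces is not literally a $\PLS(\mathcal{R}(exp), \leq)$ instance.

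To repair the $(\subseteq)$ direction you would need one of the following, neither of which you supply: (a) a preliminary collapse of the bounded hierarchy, namely that $I\Delta_0+\EXP$ is axiomatizable by $\Pi^b_1(open)$-induction over $\mathcal{L}_{\mathcal{R}(exp)}$ (via exp-based sequence coding and bounded collection), so that Theorem \ref{t3-14} applies at $k=0$ and already yields a feasibility predicate quantifier-free in $\mathcal{L}_{\mathcal{R}(exp)}$; or (b) a verification that the bounded-collection/sequence-decoding Skolem functions you invoke are definable by $\mathcal{L}_{\mathcal{R}(exp)}$-terms, with the defining equations provable in $\mathcal{R}(exp)$ itself, so that adjoining them changes nothing. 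Without one of these the step from $\GLS(\Pi^b_{k-1}(open), \{A\}, \mathcal{R}(exp), \leq, t)$ to $\PLS(\mathcal{R}(exp), \leq)$ is asserted rather than proved. Your $(\supseteq)$ direction is essentially correct in outline, though it tacitly relies on $I\Delta_0+\EXP$ being able to code and reason about the $N$-iterated walk of length $c(x,i(x))$; this is available to $I\Delta_0+\EXP$ but deserves an explicit sentence since the iterate of $N$ is not itself an $\mathcal{L}_{\mathcal{R}(exp)}$-term.
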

\begin{lem}\label{t3-17}
$\TFNP(\PRA_{\prec})\equiv \PLS(\BASIC_{\prec}, \leq) $.
\end{lem}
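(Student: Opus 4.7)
The proof plan is to instantiate Theorem \ref{t3-14} in exactly the same way that Corollary \ref{t3-16} instantiates it for $I\Delta_0 + \EXP$. Concretely, set $\Phi$ to be the class of quantifier-free formulas of the language of $\BASIC_{\prec}$, take $k = 0$ so that $\Pi_1(\Phi)$ is the class of bounded universal formulas over that language, and take $\mathcal{B} = \BASIC_{\prec}$. Because $\PRA_{\prec}$ is by Definition \ref{t3-6} axiomatized by $\BASIC_{\prec}$ together with the usual (open) induction rule, it falls inside the $\mathfrak{B}(\mathbb{T}_{all}, \Pi_1(\Phi), \BASIC_{\prec})$ framework, so the hypothesis of Theorem \ref{t3-14} is met for any $\Pi^0_2$ statement.

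For the forward inclusion, suppose $\PRA_{\prec} \vdash \forall x\, \exists y \leq t(x)\, A(x,y)$ with $A$ an open $\NP$ predicate. Applying Theorem \ref{t3-14} produces an instance of $\GLS(\Pi_0(\Phi), \{A\}, \BASIC_{\prec}, \leq, t)$ and a projection reduction that returns a witness $y$. Since $\Pi_0(\Phi)$ consists of open formulas of $\BASIC_{\prec}$ and the ambient $\leq$ is the natural well-order, this is by Definition \ref{t3-13} precisely an element of $\PLS(\BASIC_{\prec}, \leq)$, so the search problem lies in $\PLS(\BASIC_{\prec}, \leq)$.

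For the backward inclusion, I plan to verify directly that every $\PLS(\BASIC_{\prec}, \leq)$ problem is a total $\NP$ search problem provable in $\PRA_{\prec}$. Given the data $(N, c, F, i, G, t)$ from Definition \ref{t3-13}, define the sequence $s_0 = i(x)$, $s_{n+1} = N(x, s_n)$. Inside $\PRA_{\prec}$, the cost sequence $n \mapsto c(x, s_n)$ is strictly $\prec$-decreasing until a fixed point is reached, so invoking the Skolem term $[\mu n.\, c(x, s_n)]$ built into $\BASIC_{\prec}$ yields a least index at which $N(x, s_n) = s_n$, and by the defining axioms of the problem this $s_n$ satisfies $G(x, s_n)$ and lies below $t(x)$. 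All of this is a routine open-induction argument inside $\PRA_{\prec}$, giving the required $\forall x\, \exists s \leq t(x)\, G(x, s)$.

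The one genuinely technical step is matching $\PRA_{\prec}$'s axiomatization to the $\mathfrak{B}$-framework used by Theorem \ref{t3-14}, i.e.\ showing that passing from quantifier-free induction to $\Pi_1(\Phi)$-induction over $\BASIC_{\prec}$ does not change the provable $\Pi^0_2$ statements. This is the main obstacle, but it follows in the standard way from the fact that in $\BASIC_{\prec}$ every bounded universal formula $\forall z \leq s\, \phi(z)$ is equivalent to an open formula using the $\mu$-term associated with the characteristic function of $\neg \phi$; hence $\Pi_1(\Phi)$-induction is reducible to open induction in the extended language, and Theorem \ref{t3-14} applies verbatim.
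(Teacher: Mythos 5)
Your overall plan---use Theorem~\ref{t3-14} with $k=0$, $\Phi$ quantifier-free, $\mathcal{B}=\BASIC_{\prec}$, and observe that $\Pi_1(\Phi)$-induction over $\BASIC_{\prec}$ collapses to open induction because bounded quantifiers can be eliminated using characteristic terms---is the same route the paper implicitly takes for this lemma and its companion Corollary~\ref{t3-16}; neither is proved in the paper because both are intended as direct instantiations of Theorem~\ref{t3-14}. Your identification of the induction-matching step as the genuine technical content is also right, and is exactly the observation the paper leans on in Corollary~\ref{t3-9} where $\PRA_{\prec}$ is asserted to be ``axiomatizable by the usual induction on formulas in $\Pi^b_k(open)$.''

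One step in your backward direction is imprecise and, as written, does not go through. You invoke ``the Skolem term $[\mu n.\, c(x,s_n)]$ built into $\BASIC_{\prec}$'' to locate the stopping index. But the $\mu$-operators supplied by Definition~\ref{t3-6} are axiomatized to find the first place where a sequence stops being $\prec$-decreasing, i.e.\ $f(\vec{x},y+1)\nprec f(\vec{x},y)$. In a $\PLS(\BASIC_{\prec},\leq)$ instance, the cost strictly decreases with respect to $\leq$, not $\prec$, and there is no reason $\leq$ should be a suborder of the coded ordinal order $\prec$; so $[\mu y.\,f]$ as given does not detect a $\leq$-fixed point. The repair is easy and actually cheaper than what you propose: since $c(x,s_n)$ is a strictly $\leq$-decreasing bounded sequence of naturals, the least index with $N(x,s_n)=s_n$ is found by ordinary bounded primitive-recursive search below $c(x,i(x))$, and the whole termination argument is a quantifier-free induction already available in $\PRA$ without any appeal to the $\prec$-$\mu$-operator. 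With that substitution, the backward inclusion is correct, and your proof matches the paper's intended argument.
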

Therefore by definition of $\Pi^0_2$-ordinal and the fact that $\PRA_{\prec}$ is a conservative extension of $\PRA+\PRWO(\prec)$, we have:
\begin{thm}\label{t3-18}
Let $T$ be a theory of arithmetic with $\Pi^0_2$-ordinal $\alpha_T$ with a $\PRA$-representation $\prec_{\alpha_T}$, then $\TFNP(T) \equiv \PLS(\BASIC_{\prec_{\alpha_T}}, \leq)$.
\end{thm}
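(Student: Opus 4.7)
The plan is to chain together three facts: that $\TFNP$ is a $\Pi^0_2$-invariant, that $T$ agrees with $\PRA + \PRWO(\prec_{\alpha_T})$ on $\Pi^0_2$ sentences by definition of the proof-theoretic ordinal, and that $\PRA_{\prec_{\alpha_T}}$ is a conservative Skolem extension of $\PRA + \PRWO(\prec_{\alpha_T})$; then Lemma \ref{t3-17} converts $\TFNP$ of the Skolemized theory into the desired $\PLS$ class.

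In detail, first I would unfold the meaning of $\TFNP(T)$: a total $\NP$ search problem provable in $T$ is a statement of the form $\forall x \exists y \leq t(x)\; A(x,y)$ with $A$ suitably bounded, which is $\Pi^0_2$. Hence $T$ proves the totality iff $\PRA + \PRWO(\prec_{\alpha_T})$ proves it, by Definition \ref{t3-7}. Next, I would invoke the standard observation that $\PRA_{\prec_{\alpha_T}}$ is a conservative extension of $\PRA + \PRWO(\prec_{\alpha_T})$ for $\Pi^0_2$ statements: the extra function symbols $[\mu y . f]$ are defined by bounded search along $\prec_{\alpha_T}$, whose termination is exactly what $\PRWO(\prec)$ asserts, and every use of them in a $\Pi^0_2$ consequence can be replaced by the corresponding existential quantifier with the defining axioms supplied by $\PRWO$. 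So $T$, $\PRA + \PRWO(\prec_{\alpha_T})$ and $\PRA_{\prec_{\alpha_T}}$ all have the same $\TFNP$ class.

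Finally I would apply Lemma \ref{t3-17} to $\prec := \prec_{\alpha_T}$, which identifies $\TFNP(\PRA_{\prec_{\alpha_T}})$ with $\PLS(\BASIC_{\prec_{\alpha_T}}, \leq)$. Stringing the equivalences $\TFNP(T) \equiv \TFNP(\PRA + \PRWO(\prec_{\alpha_T})) \equiv \TFNP(\PRA_{\prec_{\alpha_T}}) \equiv \PLS(\BASIC_{\prec_{\alpha_T}}, \leq)$ yields the theorem.

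The one place that requires real care, and which I would expect to be the main subtlety, is the middle conservativity step: one must verify that the Skolemization by $[\mu y . f]$ terms does not smuggle in new $\Pi^0_2$ consequences beyond those already provable from $\PRWO(\prec)$. The standard argument goes by induction on proofs in $\PRA_{\prec_{\alpha_T}}$, replacing each Skolem term by its defining existential and discharging it using an instance of $\PRWO(\prec)$; this is routine but is the only non-cosmetic ingredient, since everything else is a direct appeal to definitions and to Lemma \ref{t3-17}.
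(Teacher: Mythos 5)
Your proposal is correct and follows exactly the paper's intended argument: the paper derives Theorem \ref{t3-18} precisely by combining the definition of $\Pi^0_2$-ordinal (Definition \ref{t3-7}), the conservativity of $\PRA_{\prec}$ over $\PRA+\PRWO(\prec)$, and Lemma \ref{t3-17}. Your elaboration of why the Skolemization step is conservative is a sensible expansion of what the paper only asserts, but the route is the same.
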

And finally by Theorem \ref{t3-8} we have:
\begin{cor}\label{t3-19}
\begin{itemize}
\item[$(i)$]
$\TFNP(I\Sigma_1) \equiv \PLS(\BASIC_{\prec_{\omega^2}}, \leq)$.
\item[$(ii)$]
For all $n > 1$, $\TFNP(I\Sigma_n) \equiv \PLS(\BASIC_{\prec_{\omega_n}}, \leq)$.
\item[$(iii)$]
$\TFNP(\PA)\equiv \PLS(\BASIC_{\prec_{\epsilon_0}}, \leq)$.
\item[$(iv)$]
For any $\PRA$-representable ordinal $\epsilon_0 \prec \alpha $, $\TFNP(\PA+\TI(\alpha)) \equiv \PLS(\BASIC_{\prec_{\alpha}}, \leq)$.
\end{itemize}
\end{cor}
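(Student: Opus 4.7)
The plan is to derive Corollary \ref{t3-19} as a direct instantiation of Theorem \ref{t3-18} at the four specific theories listed, using Theorem \ref{t3-8} to supply the $\Pi^0_2$-proof-theoretic ordinals. Concretely, Theorem \ref{t3-18} asserts that for any theory $T$ of arithmetic whose $\Pi^0_2$-ordinal is $\alpha_T$, with a $\PRA$-representation $\prec_{\alpha_T}$, one has $\TFNP(T) \equiv \PLS(\BASIC_{\prec_{\alpha_T}}, \leq)$. So the only task is to substitute the four specific ordinals from Theorem \ref{t3-8}.

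First I would handle case (i). By Theorem \ref{t3-8}(i), the $\Pi^0_2$-ordinal of $I\Sigma_1$ is $\omega^2$, so its $\PRA$-representation is $\prec_{\omega^2}$. Plugging $T = I\Sigma_1$ and $\alpha_T = \omega^2$ into Theorem \ref{t3-18} yields $\TFNP(I\Sigma_1) \equiv \PLS(\BASIC_{\prec_{\omega^2}}, \leq)$. Cases (ii), (iii), (iv) follow the same pattern, using parts (ii), (iii), (iv) of Theorem \ref{t3-8} respectively: for $I\Sigma_n$ with $n > 1$ the ordinal is $\omega_n$; for $\PA$ it is $\epsilon_0$; and for $\PA + \TI(\alpha)$ with $\epsilon_0 \prec \alpha$ the ordinal is $\alpha$ itself (which by hypothesis is already $\PRA$-representable, so Theorem \ref{t3-18} applies directly).

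Since each step is a mechanical substitution, there is no real obstacle, but I would note one mild subtlety to address explicitly: Theorem \ref{t3-18} requires a $\PRA$-representation of the ordinal, which is given as hypothesis in each case of Theorem \ref{t3-8}; the representations $\prec_{\omega^2}$, $\prec_{\omega_n}$, $\prec_{\epsilon_0}$ are the standard primitive recursive ones, and for case (iv) the representability is part of the assumption on $\alpha$. Thus the corollary follows by four independent instantiations with no additional content beyond bookkeeping the ordinal notations.
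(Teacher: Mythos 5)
Your proposal is correct and matches the paper's own reasoning exactly: the paper introduces the corollary with the phrase ``And finally by Theorem \ref{t3-8} we have:'', which is precisely the instantiation of Theorem \ref{t3-18} with the $\Pi^0_2$-ordinals supplied by Theorem \ref{t3-8}. Your added remark about $\PRA$-representability being available in each case is a reasonable bit of bookkeeping that the paper leaves implicit.
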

\begin{rem}
These characterizations of search problems of strong theories of arithmetic may seem a bit counter-intuitive. The reason is as follows: Assume that we are working with $I\Sigma_1$. Then by Corollary \ref{t3-19}, we have access to all primitive recursive functions and predicates for our formulas and reductions and what we want to solve is just an $\NP$ search problem. Hence, having this huge power, it seems that just one reduction should be enough and it means that our characterization is weak or trivial in some sense. This is not the case and the explanation is as follows: It is correct that we have access to all primitive recursive functions but they act just like oracles in a black box. We can ask our questions but we can not understand their behavior and hence we can not be sure about the truth of their answers. Therefore, we need to use a long sequence of reductions and in each reduction we can be sure of the very limited part of the argument. But if you still think that this incomplete access to complex functions is unbearable even with the mentioned explanation, we will refer you to the next section in which we eliminate the presence of complex function symbols via proof theoretic ordinals.
\end{rem}
In the rest of this section we will explain some applications of non-deterministic flows. But first of all let us explain why we need this kind of non-determinism. Assume that we are working in the theory $S^k_2$ which has the polynomial induction and not the usual one. If we want to decompose proofs of this theory to a sequence of reductions, we have to kill the effect of the contraction rule. But simulating contraction needs an exponential sequence of reductions which we can not afford by our polynomial induction. Hence in this situation and in all the situations that the induction is extremely weaker than the bounds of the formulas, it is natural to work with reductions that handle the contraction rule automatically, and this power is exactly what the non-deterministic reductions provide.\\

To apply the non-deterministic soundness, let us first define a hierarchy of theories of bounded arithmetic to have a variety of theories with gaps between term bounds and induction lengths:
\begin{dfn}\label{t3-20}
Define $\mathbb{T}_m$ as the term ideal consisting of all terms less than terms of the form $|t|_{m}$. Define the theory $R^k_{m, n}$ as $\mathfrak{B}(\mathbb{T}_m, \Pi^b_k(\#_n), \BASIC(\#_n))$. 
\end{dfn}
In the following theorem, we show that it is possible to decompose proofs of $R^k_{m, n}$:
\begin{thm}\label{t3-21}
Let $\Gamma, \Delta \subseteq \Pi^b_k(\#_n)$, then $R^k_{m, n} \vdash \Gamma \Rightarrow \Delta$ iff 
\[
\Gamma \rhd_n^{(\mathbb{T}_m, \Pi^b_k(\#_n), \BASIC(\#_n))} \Delta.
\]
\end{thm}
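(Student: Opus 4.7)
The plan is to derive this theorem directly from the general non-deterministic soundness (Theorem \ref{t2-30}(ii)) and completeness (Theorem \ref{t2-33}(ii)) already established, by instantiating the abstract parameters $(\mathbb{T},\Pi,\mathcal{A},\mathcal{B})$ as $(\mathbb{T}_m,\Pi^b_k(\#_n),\BASIC(\#_n),\BASIC(\#_n))$. Indeed, $R^k_{m,n}$ is by definition the bounded arithmetic $\mathfrak{B}(\mathbb{T}_m,\Pi^b_k(\#_n),\BASIC(\#_n))$, so the two master theorems already say: if $R^k_{m,n}\vdash\Gamma\Rightarrow\Delta$ then $\Gamma\rhd_n^{(\mathbb{T}_m,\Pi^b_k(\#_n),\BASIC(\#_n))}\Delta$, and conversely, provided the side-conditions of those theorems are met.

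Thus the first step is to verify that the hypotheses of Theorems \ref{t2-30}(ii) and \ref{t2-33}(ii) actually hold for our choice. Concretely, I would check: (a) $\Pi^b_k(\#_n)$ is a $\pi$-class in the sense of Definition \ref{t2-5} (closure under substitution, subformulas, $\mathcal{R}$-conjunction, $\mathcal{R}$-disjunction, bounded universal quantification, and the required $\mathcal{R}$-negation clause for the outermost existential block), which is the standard Buss-style closure property of the classes $\hat{\Pi}^b_k$ in the $\#_n$-enriched language; (b) $\mathbb{T}_m$ is a $\BASIC(\#_n)$-term ideal in the sense of Definition \ref{t2-2}, i.e., closed under the basic operations of the language and substitution, and equipped with a monotone majorizing subset, for which the natural choice is the family of terms of the form $|t|_m$; and (c) $\BASIC(\#_n)\subseteq R^k_{m,n}$, which is immediate since $\BASIC(\#_n)$ appears as the axiom set $\mathcal{A}$ in the very definition of $R^k_{m,n}$, hence $\mathcal{A}\subseteq\mathcal{B}$ and $\mathcal{B}\subseteq\mathfrak{B}(\mathbb{T},\Pi,\mathcal{A})$ both hold.

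Once these side-conditions are in place, the forward direction is Theorem \ref{t2-30}(ii) applied verbatim, and the backward direction is Theorem \ref{t2-33}(ii) applied verbatim. No new inductive argument on proofs or on flow length is needed here: all the heavy lifting (propositional rules, quantifier rules, contraction via Lemma \ref{t2-27}, and the induction rule via Lemma \ref{t2-28}) has been absorbed into the general theorems.

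The one spot that deserves actual care, and which I would treat as the main obstacle, is verifying that $\mathbb{T}_m$ is genuinely a $\BASIC(\#_n)$-term ideal. Closure under $+,\cdot,\#_n$ and substitution for terms bounded by $|t|_m$ is fine because applying any polynomial-growth operation of the language to inputs of magnitude $|t|_m$ still lands below some $|t'|_m$ for a suitable $t'$, provably in $\BASIC(\#_n)$; but this needs to be spelled out using the standard length estimates $|s+t|\leq|s|+|t|+1$, $|s\cdot t|\leq|s|+|t|$, and the iteration of $|\cdot|$ for the $m$-th length. Once this growth-rate bookkeeping is recorded and the monotone majorizing family $\{|t|_m\}$ is exhibited, the theorem follows immediately as a specialization of the abstract soundness/completeness pair.
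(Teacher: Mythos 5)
Your proposal is correct and takes exactly the route the paper implicitly intends: the paper states Theorem \ref{t3-21} without a proof, precisely because $R^k_{m,n}$ is defined in Definition \ref{t3-20} to be $\mathfrak{B}(\mathbb{T}_m,\Pi^b_k(\#_n),\BASIC(\#_n))$, so that the forward direction is a verbatim instance of the non-deterministic soundness (Theorem \ref{t2-30}(ii)) and the backward direction a verbatim instance of the non-deterministic completeness (Theorem \ref{t2-33}(ii)), with $\mathcal{A}=\mathcal{B}=\BASIC(\#_n)$ making the required inclusions $\mathcal{A}\subseteq\mathcal{B}$ and $\mathcal{B}\subseteq\mathfrak{B}(\mathbb{T},\Pi,\mathcal{A})$ trivial. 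One small correction to your bookkeeping paragraph: Definition \ref{t2-2} only requires a term set to be closed under the $\mathcal{L}_{\mathcal{R}}$-operations (successor, $+$, $\cdot$, $-$, $\lfloor\cdot/\cdot\rfloor$), not under $\#_n$; and in fact $\mathbb{T}_m$ is generally \emph{not} closed under $\#_n$ for $m\geq 1$, since $\#_n$ applied to arguments of magnitude $|t|_m$ produces values comparable to $2^{|t|_{m+1}\#_{n-1}|t|_{m+1}}$, which need not be majorized by any $|t'|_m$ with $t'$ in the $\#_n$-language. Since that closure is not demanded by the definition, this does not affect your argument, but the assertion should be dropped; also, you verify the $\pi$-class condition for $\hat{\Pi}^b_k$ whereas the theorem is stated for $\Pi^b_k(\#_n)$ --- both appear in Example \ref{t2-7} as $\pi$-classes, so the conclusion is unchanged, but the class named should match.
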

The previous theorem is useful for some specific cases that we are interested in. For the first application, we can reprove some strong version of Buss's witnessing theorem for the $\{S_2^k\}_{k=0}^{\infty}$ hierarchy:
\begin{cor}\label{t3-22}(Strong Witnessing Theorem)
The provably $\Sigma^b_k$-definable functions of $S_2^k$ are in $\Box_{k}^p$, provably in $\PV$, i.e. if $S^k_2 \vdash \forall \vec{x} \exists y A(\vec{x}, y)$ where $A(\vec{x}, y) \in \Sigma^b_k$, then there exist a machine $M$ computing a function $f \in \Box_k^p$ and polytime function symbol $g$ such that $\PV \vdash comp_M(\vec{x}, w) \rightarrow A(\vec{x}, g(\vec{x}, w))$.
\end{cor}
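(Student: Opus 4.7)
The plan is to apply Theorem \ref{t3-21} via the identification $S^k_2 \simeq R^k_{1,2}$, since Buss's $\hat{\Sigma}^b_k$-$\mathrm{PIND}$ is equivalent to ordinary $\hat{\Sigma}^b_k$-induction up to terms of the form $|t|$, and these are precisely the elements of $\mathbb{T}_1$. First, I would bound the existential quantifier to obtain $S^k_2 \vdash \forall \vec{x}\, \exists y \leq r(\vec{x})\, A(\vec{x}, y)$ for some polynomial-growth term $r$ and contrapose to the refutational form $S^k_2 \vdash \forall y \leq r(\vec{x})\, \neg A(\vec{x}, y) \Rightarrow \bot$, whose antecedent lies in $\Pi^b_k$. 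Then Theorem \ref{t3-21} will supply a term $s(\vec{x}) \in \mathbb{T}_1$ (hence polynomial in $|\vec{x}|$) together with a formula $H(u, \vec{x}) \in \Pi^b_k$ such that $H(0, \vec{x}) \leftrightarrow \forall y \leq r\, \neg A(\vec{x}, y)$, $H(s(\vec{x}), \vec{x}) \leftrightarrow \bot$, and $\BASIC \vdash H(u, \vec{x}) \rightarrow H(u+1, \vec{x})$ for every $u < s(\vec{x})$.

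Next, I would extract computational content from each flow step via Herbrand's theorem (Theorem \ref{t2-14}): since every implication $H(u, \vec{x}) \to H(u+1, \vec{x})$ is provable in the universal theory $\BASIC$, there is a Herbrand expansion producing boundedly many polytime terms that witness the essentially existential quantifiers of $H(u+1)$ from adversarial choices for the essentially universal ones and from any witnesses already in place at stage $u$. The machine $M$ will then simulate the resulting two-player game with $k$ alternations dictated by the fixed $\Pi^b_k$ prefix of $H$: the $\Sigma$-player picks both the witnesses and the disjunct of each Herbrand expansion, and the $\Pi$-player supplies challenges. Starting from $H(0) \leftrightarrow \forall y \leq r\, \neg A$ and ending at $\bot$ after $s(\vec{x})$ stages, the forced move of the $\Pi$-player at stage $0$ exposes a value $y \leq r(\vec{x})$ with $A(\vec{x}, y)$, and the polytime projection $g(\vec{x}, w)$ reads this $y$ off the computation transcript $w$. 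Because $s \in \mathbb{T}_1$ is polynomial and each stage adds only polytime bookkeeping, the resulting $f$ will lie in $\Box_k^p$, and the correctness statement $comp_M(\vec{x}, w) \to A(\vec{x}, g(\vec{x}, w))$ is verifiable in $\PV$ from the $\PV$-provable correctness of each individual Herbrand step together with the $\PV$-induction available for the polynomial-length flow.

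The main obstacle will be keeping the alternation count at exactly $k$ rather than letting it grow with the flow length: a naive step-by-step simulation would stack a fresh $\Sigma/\Pi$ alternation for each of the $s(\vec{x})$ reductions and blow the complexity well past $\Box_k^p$. The right move, as Remark \ref{t2-17} and Example \ref{t2-16} make clear, is to observe that the non-determinism supplied at each step lives entirely inside a bounded $\vee$-expansion and does not introduce new quantifier alternations, so that by folding all these inner disjunctive choices into the $\Sigma$-blocks of the fixed outer $\Pi^b_k$ prefix of $H$ the whole flow collapses into a single $k$-alternation game. Carrying this collapse out carefully, and verifying that the collapsed transition is computed by a polytime function provably in $\PV$, is where the technical effort will sit; after that, the witness $g$ and the $\PV$-proof of correctness fall out essentially by inspection of the Herbrand data.
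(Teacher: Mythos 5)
Your proposal follows the paper's own route: Parikh's theorem to bound the existential, Theorem \ref{t3-21} under the identification $S^k_2 \simeq R^k_{1,2}$ to obtain a polynomial-length non-deterministic $\Pi^b_k$-flow $H(u,\vec{x})$ from $\forall y\le t\,\neg A$ to $\bot$, Herbrand's theorem on each $\BASIC$-provable step $H(u)\to H(u+1)$ to extract polytime terms, and a polynomial-length iteration with a $\Sigma^p_{k-1}$-oracle to produce the $\Box^p_k$ machine and the $\PV$-verifiable correctness by length induction on the transcript. The only cosmetic divergence is that the paper sets up the machine directly as an iterator that applies the single Herbrand term $\vec{Z}$ and queries the $\Sigma^b_{k-1}$-oracle at each step (the oracle already absorbs the $\vee$-expansion), rather than through your game-simulation picture; and since $H$ has a fixed $\Pi^b_k$ prefix throughout the flow, the alternation-stacking obstacle you flag never materialises, so the collapse you describe is automatic rather than a hurdle requiring extra work.
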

\begin{proof}
Assume $S^k_2 \vdash \forall \vec{x} \exists y A(\vec{x}, y)$. By Parikh theorem we know that there exists a bound for the existential quantifier. Hence $S^k_2 \vdash \forall y \leq t(x) \; \neg A(\vec{x}, y) \Rightarrow \bot$. By Theorem \ref{t3-21} we know that there exist a polynomial $p(\vec{|x|})$ and a formula $H(u, \vec{x}) \in \Pi^b_k$ such that the following statements are provable in $\mathcal{R} \cup \BASIC$:
\begin{itemize}
\item[$(i)$]
$H(0, \vec{x}) \leftrightarrow \forall y \leq t(x) \; \neg A(\vec{x}, y)$.
\item[$(ii)$]
$ H(p(|\vec{x}|), \vec{x}) \leftrightarrow \bot$.
\item[$(iii)$]
$\forall u < p(\vec{|x|}) \; H(u, \vec{x}) \rightarrow H(u+1, \vec{x})$.
\end{itemize}
Note that $H(u, \vec{x})=\forall \vec{z} \leq s(\vec{x}) \; G(u, \vec{x}, \vec{z})$ where $G(u, \vec{x}, \vec{z}) \in \Sigma^b_{k-1}$. Since $\mathcal{R}\cup \BASIC$ is a universal theory, by the generalization of Herbrand's theorem we know that there exists a $\vee$-expansion of formulas $ \forall y \leq t(x) \; \neg A(x, y) \rightarrow H(0, \vec{x})$, $ H(t(x), \vec{x}) \rightarrow \bot$ and $\forall u < p(\vec{|x|}) \; H(u, \vec{x}) \rightarrow H(u+1, \vec{x})$ such that we can witness existential quantifiers by terms. Note that since we have the power to decide all formulas in $\Sigma^b_{k-1}$, we can kill the effect of the expansion to find the polytime functions to witness the existential quantifiers such that:
\begin{itemize}
\item[$(i)$]
$(U(\vec{x}, z) \leq t(\vec{x}) \rightarrow \neg A(\vec{x}, U(\vec{x}, \vec{z})) \rightarrow  G(0, \vec{x}, \vec{z})$.
\item[$(ii)$]
$ G(t(x), \vec{x}, \vec{V}) \rightarrow \bot$.
\item[$(iii)$]
$\forall u < p(\vec{|x|}) \; G(u, \vec{x}, \vec{Z}(\vec{x}, \vec{z})) \rightarrow G(u+1, \vec{x}, \vec{z})$.
\end{itemize}
Now, define the algorithm $M$ as the following: Begin with $\vec{V}$ and do the following for $p(|x|)$ many steps: In each step apply $\vec{Z}$, write it somewhere and ask the oracle about $G(u, \vec{x}, \vec{Z}(\vec{x}, \vec{z}))$ and save it also somewhere else.\\

We claim that this $M$ works. If we have the whole computation of $M$, i.e. $w$, it is easy to compute the witness in that step, $\vec{a}_u$ and value of $G(u, \vec{x}, \vec{a}_u)$ by poly-time functions $\vec{v}(w, u)$ and $j(w, u, \vec{v})$ provably in $\PV$. Hence, the statement $j(w, u, \vec{v}(w, u))=0$ is provable by length induction on $u$ and therefore provably in $\PV$ we know that if $w$ is the computation of $M$ then $j(w, p(|x|), \vec{v}(w, p(|x|)))=0$ and thus $\neg G(0, \vec{x}, \vec{v}(w, p(|x|)))$ and hence $\vec{v}(w, p(|x|)) \leq \vec{t}(x)$ and $A(x, \vec{v}(w, p(|x|))$. Pick $g(w)=\vec{v}(w, p(|x|))$ and we have the claim.
\end{proof}

As the second application, note that if we put $m=n-2$, $\Gamma=\emptyset$ and $\Delta=\{\forall x \exists y \leq |t(x)|_{n-2} \; A(x, y)\}$ where $A$ is $(n-1)$-bounded, the previous theorem in the presence of RSUV isomorphism, finds a way to extract the information about $\NP$ search problems of higher-order bounded arithmetic expressed in the first order language by using faster growing smash functions.

\section{Ordinal Flows}
In the previous sections we investigated bounded theories of arithmetic and we proved that they are sound and complete with respect to their appropriate flow-based interpretations. Now, it is natural to seek for a similar theory for unbounded theories of arithmetic. First of all, note that since we are interested in low-complexity statements and since it is possible to reduce the whole quantifier complexity of unbounded strong enough theories to universal statements via their proof theoretic ordinals, it is natural to restrict our investigations to these universal theories with ordinal induction for universal formulas. Note that here we are not working with bounded theories and hence assuming that the length of a flow is a term seems inappropriate. But clearly, there is also a natural candidate in this case, which is the proof theoretic ordinal. Therefore, in the case of strong enough unbounded theories we will work with flows of universal formulas with ordinal length and we will use them to extract the computational information of the theories.  

\begin{dfn}\label{t4-1}
Let $\mathcal{L}_{\PV}$ be the language of $\PV$. Define the system $\TI(\forall_1, \prec)$ as the usual first order sequent calculus of first order language plus the axioms of $\PV$ and the following induction rule:
\begin{center}
	\begin{tabular}{c}
	    \AxiomC{$\Gamma, \forall \gamma \prec \beta \; A(\gamma) \Rightarrow \Delta, A(\beta)$}
		\LeftLabel{\tiny{$ (Ind_\alpha) $}}
		\UnaryInfC{$\Gamma \Rightarrow \Delta, A(\delta)$}
		\DisplayProof
	\end{tabular}
\end{center}		
For every $A \in \forall_1$ where $\forall_1$ means the class of all universal formulas.
\end{dfn}
Using $\Pi^0_2$-ordinal we can transfer $\Pi^0_2$ sentences form a theory $T$ to the theory $\PRA+\PRWO(\prec)$ where $\prec$ is a $\PRA$-representation of $\alpha_T$. The following theorem makes it possible to continue this process of transferring to $\TI(\forall_1, \prec)$ which is a more convenient theory for our technical purpose. 
\begin{lem}\label{t4-2}
$\PRA+\PRWO(\prec) \subseteq \TI(\forall_1, \prec)$.
\end{lem}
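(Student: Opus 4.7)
The plan is to verify the two halves of the containment separately: first that the axioms of $\PRA$ are derivable in $\TI(\forall_1,\prec)$, and second that the scheme $\PRWO(\prec)$ is. The first half is essentially bookkeeping (language-alignment, plus the observation that $\forall_1$ induction subsumes the quantifier-free case). The real content lies in the second half, which is a canonical well-order induction argument.

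For $\PRA \subseteq \TI(\forall_1,\prec)$, since $\prec$ is a $\PRA$-representable well-order whose type is an ordinal $\succeq \omega$, the standard $\mathbb{N}$-ordering embeds as an initial segment of $\prec$. Restricting $(\mathrm{Ind}_\alpha)$ to this initial segment and to $\forall_1$ formulas subsumes ordinary induction on quantifier-free formulas, which is exactly $\PRA$'s induction rule. The $\PRA$ defining equations for the primitive recursive function symbols are understood to be included among the base axioms packaged into $\mathcal{L}_{\PV}$ (or introduced by a conservative extension through the usual primitive-recursion-from-$\forall_1$-induction procedure).

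For $\PRWO(\prec)$, fix a function symbol $f$ and parameters $\vec{x}$. I argue classically: assume the hypothesis
\[
H \;\equiv\; \forall y\; f(\vec{x}, y{+}1) \prec f(\vec{x}, y)
\]
and derive $\bot$. Consider the progressive predicate
\[
A(\alpha) \;\equiv\; \forall z\; f(\vec{x}, z) \neq \alpha,
\]
which is $\forall_1$. I verify the premise of $(\mathrm{Ind}_\alpha)$, namely $H,\, \forall \gamma \prec \beta\, A(\gamma) \Rightarrow A(\beta)$: given $z$, suppose toward a sub-contradiction that $f(\vec{x}, z) = \beta$; then $H$ gives $f(\vec{x}, z{+}1) \prec f(\vec{x}, z) = \beta$, and setting $\gamma := f(\vec{x}, z{+}1)$ we have $\gamma \prec \beta$, so that $A(\gamma)$ instantiated at $z{+}1$ contradicts $f(\vec{x}, z{+}1) = \gamma$. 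Applying $(\mathrm{Ind}_\alpha)$ with $\delta := f(\vec{x}, 0)$ delivers $H \Rightarrow A(f(\vec{x}, 0))$, i.e.\ $\forall z\; f(\vec{x}, z) \neq f(\vec{x}, 0)$, which at $z = 0$ is visibly false. Discharging $H$ yields $\exists y\; f(\vec{x}, y{+}1) \nprec f(\vec{x}, y)$, the instance of $\PRWO(\prec)$ for $f$.

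The only genuine obstacle is the language-alignment bookkeeping in the first half; once the primitive recursive function symbols (together with their defining equations) are placed inside $\TI(\forall_1,\prec)$ one way or the other, the argument above for $\PRWO(\prec)$ is the standard transfinite-induction-on-the-ordinal-of-$f(\vec{x},0)$ reasoning and presents no further subtlety.
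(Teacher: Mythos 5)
Your proof is correct, but the half devoted to $\PRWO(\prec)$ takes a genuinely different (and cleaner) route than the paper's. The paper encodes the argument--value pair $(n, f(n))$ into a single ordinal via a pairing of the shape $\omega \cdot f(n) + n$, notes that the descent hypothesis makes this encoding strictly $\prec$-decrease, and applies transfinite induction to a predicate built around $q(x) = \lfloor x/\omega \rfloor$ and the remainder $r(x)$; this forces a separate case analysis on whether $f(0) = 0$ and carries some awkward ordinal-arithmetic bookkeeping. Your predicate $A(\alpha) \equiv \forall z\, f(\vec{x},z) \neq \alpha$ avoids the pairing entirely: progressiveness is a one-line consequence of the descent hypothesis, and the contradiction at $\alpha = f(\vec{x}, 0)$ needs no case split. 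What you gain is a shorter and more transparent argument; what the paper's version buys is nothing essential here, it is simply the other standard encoding of the same idea. One point to be explicit about, since it affects both approaches equally: $f$ is not an $\mathcal{L}_{\PV}$-symbol, so every occurrence of $f(\vec{x},z) = \alpha$ must be read as the polytime graph predicate $F(\vec{x},z,\alpha)$ --- only then is your $A$ literally $\forall_1$ --- and the instantiation $\delta := f(\vec{x},0)$ has to be carried out by first deriving $\exists u\, F(\vec{x},0,u)$ and then plugging the witness into the universal closure of the induction conclusion. You flag this as bookkeeping, and it is, but it is precisely the translation the paper spends its opening paragraph on, so you are at the same level of rigor. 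Your treatment of $\PRA \subseteq \TI(\forall_1,\prec)$ via embedding $\omega$ as an initial segment of $\prec$ matches the paper's.
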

\begin{proof}
First of all, notice that it is possible to represent any primitive recursive function $f$ by a polynomial time computable predicate $F$. We will use this definition to interpret all quantifier-free statements in $\PRA$ as formulas in $\forall_1$ statements in the language of $\PV$. By our way of interpretation the defining axioms in $\PRA$ are provable in $\TI(\forall_1, \prec)$. For the induction, it is enough to use induction on $\omega \prec \alpha$ in $\TI(\forall_1, \prec)$. What remains is the axiom $\PRWO(\prec)$. Note that the interpretation of this axiom is $\forall y \forall uv \; (F(\vec{z}, y+1, u) \wedge F(\vec{z}, y, v)) \rightarrow u \prec v \Rightarrow \bot$. We know that $f(0)$ exists, i.e. $\forall a \neg F(0, a) \Rightarrow \bot$. To prove, use induction on
\[
A(x)=
\begin{cases}
\bot  & F(r(x), q(x)) \wedge x \prec a \omega \\
\top & o.w.\\
\end{cases} 
\] 
where $q(x)=\lfloor \frac{x}{\omega} \rfloor $ and $r(x)=x-q(x)$.\\ 

$A$ is inductive because if $\forall z \prec x \; A(x)$ is true and $A(x)$ is false, then by definition $x \prec \omega a$ and $F(q(x), r(x))$. Pick $c$ as $f(r(x)+1)$ which we know exists. Therefore by the assumption we know $\forall y \forall uv \; F(\vec{z}, y+1, u) \wedge F(\vec{z}, y, v) \rightarrow u \prec v$ and hence $c \prec f(r(x))$. If $a \neq 0$ we have $a(f(r(x)+1))+r(x)+1 \prec af(r(x))+r(x)=x$. Hence $A(a(f(r(x)+1))+r(x)+1)$ is $\bot$ which contradicts $\forall z \prec x \; A(x)$. If $a=0$ then $f(0)=0$ which contradicts $f(1) \prec f(0)=0$.
\end{proof}
We have defined our theory so far. Let us now define the concept of ordinal flows.
\begin{dfn}\label{t4-3}
Let $A(\vec{x})$, $B(\vec{x})$ and $H(\delta, \vec{x})$ be some formulas in $\forall_1$. A tuple $(H, \beta)$ is called an $\alpha$-flow if
\begin{itemize}
\item[$(i)$]
$\PV \vdash A(\vec{x}) \rightarrow \forall \gamma \prec H(1, \vec{x})$.
\item[$(ii)$]
$\PV \vdash \forall \gamma \prec \beta \; [\forall \delta \prec \gamma \; H(\delta, \vec{x}) \rightarrow \forall \delta \prec \gamma+1 \; H(\delta, \vec{x})]$.
\item[$(iii)$]
$\PV \vdash \forall \gamma \prec \beta \; H(\gamma, \vec{x}) \rightarrow B(\vec{x})$.
\end{itemize} 
\end{dfn}
Like the bounded case we need to prove some basic theorems for this new notion. They will help us to prove the soundness theorem for this kind of flow.
\begin{lem}\label{t4-4}(Conjunction Application)
Let $C(\vec{x}) \in \forall_1$ be a formula. If $A(\vec{x}) \rhd B(\vec{x}) $ then $A(\vec{x}) \wedge C(\vec{x}) \rhd B(\vec{x}) \wedge C(\vec{x}) $.
\end{lem}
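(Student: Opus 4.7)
The plan is to mimic exactly the strategy used for bounded flows in Lemma~\ref{t2-21}: take an existing $\alpha$-flow witnessing $A \rhd B$ and adjoin $C$ to each stage of its progression. Concretely, suppose $(H, \beta)$ is an $\alpha$-flow witnessing $A(\vec{x}) \rhd B(\vec{x})$, so that $H(\delta, \vec{x}) \in \forall_1$ and $\PV$ proves the three clauses of Definition~\ref{t4-3}. I would define
\[
H'(\delta, \vec{x}) \;:=\; H(\delta, \vec{x}) \wedge C(\vec{x}),
\]
and claim that $(H', \beta)$ is an $\alpha$-flow witnessing $A(\vec{x}) \wedge C(\vec{x}) \rhd B(\vec{x}) \wedge C(\vec{x})$. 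Observe first that $H'$ is indeed a $\forall_1$ formula, since the class $\forall_1$ is closed under conjunction (one simply merges the universal prefixes), and $C \in \forall_1$ by hypothesis.

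Next I would verify the three conditions of Definition~\ref{t4-3} in order. Clause~(i) follows at once from the corresponding clause for $H$: assuming $A(\vec{x}) \wedge C(\vec{x})$, we get $A$ and hence the base-case behaviour of $H$, while $C$ is carried across directly, so $H'(\delta, \vec{x})$ holds on the initial segment as required. Clause~(iii) is equally immediate: if $\forall \gamma \prec \beta \; H'(\gamma, \vec{x})$ then in particular $\forall \gamma \prec \beta \; H(\gamma, \vec{x})$, which by the original flow yields $B(\vec{x})$; and $C(\vec{x})$ is read off from any instance of the conjunction, giving $B(\vec{x}) \wedge C(\vec{x})$.

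The only clause that deserves a moment's thought is the inductive step (ii). Fix $\gamma \prec \beta$ and assume $\forall \delta \prec \gamma \; H'(\delta, \vec{x})$, i.e.\ $\forall \delta \prec \gamma \; [H(\delta, \vec{x}) \wedge C(\vec{x})]$. Extracting the two conjuncts, we have $\forall \delta \prec \gamma \; H(\delta, \vec{x})$, and by the inductive clause for the original flow this gives $\forall \delta \prec \gamma+1 \; H(\delta, \vec{x})$; the parameter $C(\vec{x})$ is still available and does not depend on $\delta$, so conjoining it back yields $\forall \delta \prec \gamma+1 \; H'(\delta, \vec{x})$. All of this reasoning is purely propositional above $\PV$, so it is carried out inside $\PV$ as required. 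The non-deterministic bounded proof in Lemma~\ref{t2-22} (really Lemma~\ref{t2-21}) is structurally identical, only with a term-length parameter $u < t(\vec{x})$ in place of the ordinal $\gamma \prec \beta$, which reassures me that nothing about the transfinite setting changes the argument.

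The only potential obstacle I can anticipate is handling $C$ at the degenerate value $\gamma = 0$, where the premise $\forall \delta \prec 0 \; H'(\delta, \vec{x})$ is vacuous and one might worry that $C(\vec{x})$ cannot be recovered on its own. But this is a non-issue: the ordinal flow obtains its starting data from clause~(i), which already delivers $C(\vec{x})$ on the initial segment from the assumption $A(\vec{x}) \wedge C(\vec{x})$, so the inductive progression never needs to manufacture $C$ from nothing. Thus the verification goes through uniformly, and $(H', \beta)$ is the desired $\alpha$-flow.
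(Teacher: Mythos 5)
Your proposal matches the paper's proof exactly: take the given $\alpha$-flow $(H,\beta)$ for $A \rhd B$, set $H'(\gamma,\vec{x}) = H(\gamma,\vec{x}) \wedge C(\vec{x})$, keep $\beta$, and check the three clauses of Definition~\ref{t4-3}; the paper merely asserts this verification is clear. The extra care you take over the $\gamma = 0$ instance of clause~(ii) is sensible, but the tension you noticed there reflects a latent looseness in how Definition~\ref{t4-3} is stated (its clause~(ii) at $\gamma=0$ literally demands $\PV \vdash H(0,\vec{x})$ outright, making clause~(i) vacuous) rather than any flaw in your argument.
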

\begin{proof}
Since $A(\vec{x}) \rhd B(\vec{x}) $, then by Definition \ref{t4-3} there exist a term $\beta$ and a formula $H(\gamma, \vec{x}) \in \forall_1$ such that we have the conditions in the Definition \ref{t4-3}. Define $\beta'=\beta$ and $H'(\gamma, \vec{x})=H(\gamma, \vec{x}) \wedge C(\vec{x})$. It is clear that the $(H', \beta')$ is an $\alpha$-flow from $A(\vec{x}) \wedge C(\vec{x})$ to $B(\vec{x}) \wedge C(\vec{x})$.
\end{proof}
\begin{lem}\label{t4-5}(Disjunction Application)
Let $C(\vec{x}) \in \forall_1$ be a formula. If $A(\vec{x}) \rhd B(\vec{x}) $ then $A(\vec{x}) \vee C(\vec{x}) \rhd B(\vec{x}) \vee C(\vec{x}) $.
\end{lem}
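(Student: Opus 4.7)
The plan is to mirror the proof of Lemma 4.4 almost verbatim, replacing the conjunction with disjunction. Given an $\alpha$-flow $(H,\beta)$ witnessing $A(\vec{x}) \rhd B(\vec{x})$, I would set $\beta' := \beta$ and define
\[
H'(\gamma, \vec{x}) \;:=\; H(\gamma, \vec{x}) \vee C(\vec{x}),
\]
and then verify that $(H', \beta')$ is an $\alpha$-flow from $A(\vec{x}) \vee C(\vec{x})$ to $B(\vec{x}) \vee C(\vec{x})$. Since $C \in \forall_1$, the class $\forall_1$ is closed under disjunction with $H(\gamma, \vec{x}) \in \forall_1$, so $H'$ lives in the right syntactic class.

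Condition (i) of Definition 4.3 is immediate: from $A(\vec{x})$ we get $H(\gamma, \vec{x})$ on the initial segment and hence $H'(\gamma, \vec{x})$; from $C(\vec{x})$ alone we get $H'(\gamma, \vec{x})$ trivially. Condition (iii) is equally direct: if $H'$ holds on the $\beta$-segment, classical case analysis on $C$ (available since $\PV$ is classical) gives either $C$, hence $B \vee C$, or else $\neg C$, whence $H(\gamma, \vec{x})$ holds on the whole segment and the original clause (iii) yields $B$, hence $B \vee C$.

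The only place any real work is required is condition (ii), the progression clause. Here I would use the classical equivalence
\[
\forall \delta \prec \gamma \;(H(\delta, \vec{x}) \vee C(\vec{x})) \;\leftrightarrow\; (\forall \delta \prec \gamma \; H(\delta, \vec{x})) \vee C(\vec{x}),
\]
which holds in $\PV$ because $C$ does not depend on $\delta$ and $\PV$ has LEM. Using this, the assumption $\forall \delta \prec \gamma \; H'(\delta, \vec{x})$ splits into two cases. If $C(\vec{x})$ holds, then $H'(\delta, \vec{x})$ holds for every $\delta$, in particular on the segment $\delta \prec \gamma + 1$. Otherwise $\forall \delta \prec \gamma \; H(\delta, \vec{x})$ holds, and the original progression clause for $(H, \beta)$ gives $\forall \delta \prec \gamma + 1 \; H(\delta, \vec{x})$, which in turn implies $\forall \delta \prec \gamma + 1 \; H'(\delta, \vec{x})$.

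The only mild obstacle worth flagging is making sure that the case split on $C$ versus $\neg C$ is formally available inside $\PV$ for a $\forall_1$-formula $C$; this is fine because $\PV$ is classical and $H'$ is built so that all reasoning about it can be done quantifier-free after fixing a witness for $\delta$, but one should state this explicitly. The non-deterministic analogue of the bounded case (Lemma 2.22) goes through by the same classical-logic trick, so no extra machinery is needed beyond Definition 4.3 and the classical equivalence above.
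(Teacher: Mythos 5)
Your proof follows the same construction as the paper's: set $\beta' = \beta$ and $H'(\gamma, \vec{x}) = H(\gamma, \vec{x}) \vee C(\vec{x})$, then check the three clauses of Definition 4.3. The paper simply asserts the verification is easy, while you spell out the classical case split on $C$ versus $\neg C$; this is a correct filling-in of the same argument, not a different route.
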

\begin{proof}
Since $A(\vec{x}) \rhd B(\vec{x}) $, then by Definition \ref{t4-3}, there exist an ordinal $\beta$ and a formula $H(\gamma, \vec{x}) \in \forall_1$ such that the conditions in the Definition \ref{t4-3} is provable in $\PV$. Now define $\beta'=\beta$ and $H'(\gamma, \vec{x})=H(\gamma, \vec{x}) \vee C(\vec{x})$. It is easy to see that $(H', \beta')$ is an $\alpha$-flow from $A(\vec{x}) \vee C(\vec{x})$ to $B(\vec{x}) \vee C(\vec{x})$.   
\end{proof}
\begin{lem}\label{t4-7}
\begin{itemize}
\item[$(i)$](Weak Gluing)
If $A(\vec{x}) \rhd B(\vec{x}) $ and $ B(\vec{x}) \rhd C(\vec{x})$, then $A(\vec{x}) \rhd C(\vec{x})$.
\item[$(ii)$](Strong Gluing)
If $ \forall \gamma \prec \beta A(\gamma, \vec{x}) \rhd \gamma \prec \beta+1 A(\gamma, \vec{x})$, then $\top \rhd A(\theta, \vec{x})$.
\end{itemize}
\end{lem}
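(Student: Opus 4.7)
The plan for (i), Weak Gluing, is to concatenate the two given flows end-to-end. Given $(H_1, \beta_1)$ witnessing $A \rhd B$ and $(H_2, \beta_2)$ witnessing $B \rhd C$, I will define
\[
H(\gamma, \vec{x}) \;:=\; (\gamma \prec \beta_1 \to H_1(\gamma, \vec{x})) \,\wedge\, (\beta_1 \preceq \gamma \to H_2(\gamma - \beta_1, \vec{x}))
\]
with $-$ the primitive-recursive ordinal subtraction on the chosen $\PRA$-representation, and take the length to be $\beta := \beta_1 + \beta_2$. The formula stays in $\forall_1$ because the ordinal comparisons and $-$ are quantifier-free. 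Clauses (i) and (iii) of Definition \ref{t4-3} reduce immediately to the corresponding clauses of the two given flows. For the progression clause (ii), I would split by case analysis on $\gamma$: if $\gamma + 1 \preceq \beta_1$, the $H_1$-progression suffices; at the seam $\gamma + 1 = \beta_1$, the $H_1$-terminal clause chains with the $H_2$-initial clause through $B$; and if $\gamma \succeq \beta_1$, the $H_2$-progression applies. Each is straightforwardly $\PV$-provable.

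For (ii), Strong Gluing, the hypothesis supplies, uniformly in $\beta$, witness data $\bigl(K(\delta, \beta, \vec{x}), \sigma(\beta)\bigr)$ for the single-step flow $\forall \gamma \prec \beta\, A(\gamma, \vec{x}) \rhd \forall \gamma \prec \beta+1\, A(\gamma, \vec{x})$. The plan is to transfinitely iterate this one-step flow. I will index positions in the new flow by a $\PV$-definable ordinal pairing $\mu = \langle \beta, \delta \rangle$ ordered lexicographically, so that the successor of $\langle \beta, \delta \rangle$ is $\langle \beta, \delta+1 \rangle$ while $\delta + 1 \prec \sigma(\beta)$ and becomes $\langle \beta+1, 0 \rangle$ once $\delta$ exhausts $\sigma(\beta)$. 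Set
\[
H^*(\mu, \vec{x}) \;:=\; \forall \gamma \prec \pi_0(\mu)\, A(\gamma, \vec{x}) \,\wedge\, K(\pi_1(\mu), \pi_0(\mu), \vec{x})
\]
with projections $\pi_0, \pi_1$, and take the length $\tau$ to be the code of $\langle \theta + 1, 0 \rangle$. Both conjuncts are in $\forall_1$, so $H^*$ is as well. The initial condition at $\mu = 0$ reduces to $K(0, 0, \vec{x})$, which is supplied by clause (i) of the premise at $\beta = 0$ applied to the vacuous $\forall \gamma \prec 0\, A(\gamma)$; the terminal condition extracts $A(\theta, \vec{x})$ from the first conjunct of $H^*(\tau, \vec{x})$.

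The heart of the argument, and the expected main obstacle, is verifying the progression clause $\forall \nu \prec \mu\, H^*(\nu) \to H^*(\mu)$ in $\PV$, uniformly in $\mu$. I would split by the shape of $\mu$: at a micro-step $\mu = \langle \beta, \delta + 1 \rangle$, clause (ii) of the premise produces $K(\delta+1, \beta, \vec{x})$ from the $K(\delta', \beta, \vec{x})$ available at the positions $\langle \beta, \delta' \rangle \prec \mu$, while $\forall \gamma \prec \beta\, A(\gamma)$ is inherited from any earlier $H^*(\langle \beta, 0 \rangle)$; at a macro-transition $\mu = \langle \beta+1, 0 \rangle$, clause (iii) of the premise yields $\forall \gamma \prec \beta+1\, A(\gamma)$ from the assembled $\forall \delta \prec \sigma(\beta)\, K(\delta, \beta, \vec{x})$, and clause (i) at $\beta + 1$ then supplies $K(0, \beta+1, \vec{x})$. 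The delicate technical points will be (a) choosing the pairing so that the case split on $\mu$ remains quantifier-free, preserving $H^* \in \forall_1$; and (b) handling limit stages $\mu = \langle \beta, 0 \rangle$ with $\beta$ a limit, where one needs $\PV$ to verify $\forall \beta' \prec \beta\, \forall \gamma' \prec \beta'\, A(\gamma') \to \forall \gamma' \prec \beta\, A(\gamma')$ directly from the structure of the chosen $\PRA$-representation of $\prec$.
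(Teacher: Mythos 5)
Your proof is correct and follows the same basic plan as the paper's — concatenation for weak gluing, transfinite iteration of the one-step flow for strong gluing — but it treats part (ii) with considerably more care than the paper does. The paper's proof of strong gluing extracts a single uniform length $\beta$ and a formula $H(\gamma,\delta,\vec{x})$ from the hypothesis, sets $\beta'=\beta\times\theta$ and $I(\gamma,\vec{x})=H(\lfloor\gamma/\theta\rfloor,\lfloor\gamma/\theta\rfloor,\vec{x})$, and declares the result a flow; as written, $I$ does not carry the accumulated fact $\forall\gamma'\prec q\,A(\gamma')$ (needed to re-enter the one-step flow at the next block), and the arguments to $H$ appear garbled. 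Your $H^*(\mu) = \forall\gamma\prec\pi_0(\mu)\,A(\gamma)\wedge K(\pi_1(\mu),\pi_0(\mu))$ repairs exactly this, packaging the accumulated universal statement into the flow formula so that the macro-transition at $\langle\beta{+}1,0\rangle$ can invoke clause (iii) of the premise and then clause (i) at $\beta{+}1$. You also allow the inner length $\sigma(\beta)$ to vary with $\beta$ and resolve the resulting bookkeeping with a lexicographic ordinal pairing, whereas the paper tacitly assumes a fixed $\beta$ (the bounded-arithmetic analogue in Lemma \ref{t2-24}(ii) handles the non-uniform case by passing to a monotone majorization, a step the ordinal proof skips). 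The two delicate points you flag at the end — keeping the case split quantifier-free so $H^*$ stays in $\forall_1$, and the $\PV$-verifiability of the limit-stage inference $\forall\beta'\prec\beta\,\forall\gamma'\prec\beta'\,A(\gamma')\to\forall\gamma'\prec\beta\,A(\gamma')$ — are genuinely the places where the chosen representation of $\prec$ and its ordinal arithmetic matter, and it is good that you name them; they are routine for the ordinal notation systems the paper has in mind (as in \cite{BBP}), but they are precisely what the paper's one-line construction sweeps under the rug. In short: same strategy, but your version is the one that would actually survive a careful check of clause (ii) of Definition \ref{t4-3}.
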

\begin{proof}
For $(i)$, since $A(\vec{x}) \rhd B(\vec{x})$ there exist an ordinal $\beta$ and a formula $H(\gamma, \vec{x}) \in \forall_1$ such that $\PV$ proves the conditions in the Definition \ref{t4-3}. On the other hand since $B(\vec{x}) \rhd C(\vec{x})$ we have the corresponding data for $B(\vec{x})$ to $C(\vec{x})$ which we show by $\beta'$ and $H'(\gamma, \vec{x})$. Define $\beta''=\beta+\beta'$ and
\[
H''(\gamma, \vec{x})=
\begin{cases}
H(\gamma, \vec{x}) & \gamma \preceq \beta \\
H'(\gamma-\beta, \vec{x}) & \beta \prec u \preceq \beta+\beta'
\end{cases} 
\]
It is easy to check that $(\beta'', H'')$ is an $\alpha$-flow from $A(\vec{x})$ to $C(\vec{x})$. \\

For $(iii)$ if we have $\forall \gamma \prec \delta A(\gamma, \vec{x}) \rhd \forall \gamma \prec \delta+1 A(\gamma, \vec{x})$ then there exists $\beta$ and $H(\gamma, \delta, \vec{x})$ such that we have the conditions of the Definition \ref{t4-3}. Define $\beta'=\beta \times \theta$ and $I(\gamma, \vec{x})=
H(\lfloor \frac{\gamma}{\theta} \rfloor, \lfloor \frac{\gamma}{\theta} \rfloor, \vec{x})$.
It is easy to see that $(I, \beta')$ is an $\alpha$-flow from $\top$ to $A(\theta, \vec{x})$.
\end{proof}
\begin{lem}\label{t4-6}(Conjunction and Disjunction Rules)
\begin{itemize}
\item[$(i)$]
If $\Gamma, A \rhd \Delta$ or $\Gamma, B \rhd \Delta$, then $\Gamma, A \wedge B \rhd \Delta$.
\item[$(ii)$]
If $\Gamma_0 \rhd \Delta_0, A $ and $\Gamma_1 \rhd \Delta_1, B$, then $\Gamma_0, \Gamma_1 \rhd \Delta_0, \Delta_1, A \wedge B$.
\item[$(iii)$]
If $\Gamma \rhd \Delta, A$ or $\Gamma \rhd \Delta, B$, then $\Gamma \rhd \Delta, A \vee B$.
\item[$(iv)$]
If $\Gamma_0, A \rhd \Delta_0$ and $\Gamma_1, B \rhd \Delta_1$, then $\Gamma_0, \Gamma_1, A \vee B \rhd \Delta_0, \Delta_1$.
\end{itemize}
\end{lem}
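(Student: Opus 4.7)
The plan is to mimic the non\nobreakdash-deterministic case of Lemma~\ref{t2-23} verbatim, since ordinal flows are built on top of $\PV$\nobreakdash-provability in exactly the same way as the non\nobreakdash-deterministic flows were built on $\mathcal{B}$\nobreakdash-provability. The three tools I will use are the conjunction application (Lemma~\ref{t2-21} in its ordinal incarnation Lemma~\ref{t4-4}), the disjunction application (Lemma~\ref{t4-5}) and, crucially, the weak gluing of Lemma~\ref{t4-7}$(i)$. Since every $\PV$\nobreakdash-provable implication is already a trivial $\alpha$\nobreakdash-flow (take $\beta = 1$ and $H(\gamma,\vec{x}) = B(\vec{x})$), each of the purely propositional reshufflings below is automatically a flow, and weak gluing lets me chain them with the hypothesized flows.

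For $(i)$ and $(iii)$ I would argue that they are essentially free. If $\Gamma, A \rhd \Delta$, then $\bigwedge \Gamma \wedge (A \wedge B) \to \bigwedge \Gamma \wedge A$ is $\PV$\nobreakdash-provable, hence a trivial flow, and weak gluing with the given flow yields $\Gamma, A\wedge B \rhd \Delta$; the $B$ case and $(iii)$ are dual.

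For $(ii)$, starting from $\Gamma_0 \rhd \Delta_0, A$ I apply conjunction application with $\bigwedge \Gamma_1$ to obtain $\bigwedge\Gamma_0 \wedge \bigwedge\Gamma_1 \rhd (\bigvee\Delta_0 \vee A)\wedge \bigwedge\Gamma_1$. From $\Gamma_1 \rhd \Delta_1, B$, conjunction application with $\bigvee\Delta_0 \vee A$ gives $(\bigvee\Delta_0 \vee A)\wedge \bigwedge\Gamma_1 \rhd (\bigvee\Delta_0 \vee A)\wedge (\bigvee\Delta_1 \vee B)$. Weak gluing chains these into $\bigwedge\Gamma_0 \wedge \bigwedge\Gamma_1 \rhd (\bigvee\Delta_0 \vee A)\wedge (\bigvee\Delta_1 \vee B)$, and the $\PV$\nobreakdash-provable distributivity $(\bigvee\Delta_0 \vee A)\wedge(\bigvee\Delta_1 \vee B) \to \bigvee\Delta_0 \vee \bigvee\Delta_1 \vee (A\wedge B)$ is a one\nobreakdash-step flow I glue on the end.

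For $(iv)$, dually, from $\Gamma_0, A \rhd \Delta_0$ I use disjunction application with $\bigwedge\Gamma_1 \wedge B$ to get $(\bigwedge\Gamma_0 \wedge A)\vee (\bigwedge\Gamma_1\wedge B) \rhd \bigvee\Delta_0 \vee (\bigwedge\Gamma_1\wedge B)$, then disjunction application with $\bigvee\Delta_0$ on $\Gamma_1, B \rhd \Delta_1$ gives $\bigvee\Delta_0 \vee(\bigwedge\Gamma_1\wedge B) \rhd \bigvee\Delta_0 \vee \bigvee\Delta_1$. Weak gluing and the $\PV$\nobreakdash-provable implication $\bigwedge\Gamma_0 \wedge \bigwedge\Gamma_1 \wedge (A\vee B) \to (\bigwedge\Gamma_0\wedge A)\vee(\bigwedge\Gamma_1\wedge B)$ (used as a leading trivial flow) close the argument. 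The only thing to double\nobreakdash-check along the way is that $\forall_1$ is preserved by the conjunctions and disjunctions introduced into the intermediate formulas $H'(\gamma,\vec{x})$, but this is immediate since a conjunction or disjunction of universal formulas is logically equivalent to a universal formula obtained by pulling the quantifiers out. I do not expect any genuine obstacle here; the proof is essentially the bookkeeping already carried out in Lemma~\ref{t2-23}, transplanted from term\nobreakdash-length to ordinal\nobreakdash-length flows.
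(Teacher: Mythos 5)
Your proof is correct and follows essentially the same route as the paper, which simply notes that the argument of Lemma~\ref{t2-23} depends only on conjunction application, disjunction application and weak gluing, all of which have been reproven for ordinal flows (Lemmas~\ref{t4-4}, \ref{t4-5}, \ref{t4-7}). You helpfully spell out the bookkeeping the paper leaves implicit, including the (correct and worth stating) observation that $\forall_1$ is preserved under the intermediate conjunctions and disjunctions by prenexing.
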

\begin{proof}
The proof is similar to the proof of the theorem \ref{t2-23}. Note that the proof of the theorem \ref{t2-23} is fully based on the weak gluing and conjunction and disjunction applications, hence we can apply the same proof wherever we have those properties.
\end{proof}

\begin{thm}\label{t4-8}(Soundness)
If $\Gamma \cup \Delta \subseteq \forall_1$ and $\TI(\forall_1, \prec) \vdash \Gamma \Rightarrow \Delta$, then there exists an $\alpha$-flow from $\Gamma$ to $\Delta$.
\end{thm}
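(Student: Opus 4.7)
The proof proceeds by induction on the length of a $\TI(\forall_1, \prec)$-derivation of $\Gamma \Rightarrow \Delta$. For the most part the argument parallels Theorem 2.30: the logical axioms and $\PV$-axioms give trivial one-step flows; weakening, exchange, and contraction are immediate because ordinal-flow reductions are just $\PV$-implications between $\forall_1$ formulas; the propositional $\wedge$- and $\vee$-rules come from Lemma 4.6; the cut rule is handled exactly as in Lemma 2.28(i), using weak gluing (Lemma 4.7(i)) together with the conjunction and disjunction applications of Lemmas 4.4 and 4.5; and the first-order quantifier rules are absorbed into the stage formula $H(\gamma, \vec{x})$ since $\forall_1$ is closed under universal quantification and under substitution.

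The only essentially new case is the transfinite induction rule $(Ind_\alpha)$. Suppose the last step derives $\Gamma \Rightarrow \Delta, A(\delta)$ from $\Gamma, \forall \gamma \prec \beta\, A(\gamma) \Rightarrow \Delta, A(\beta)$ with $A \in \forall_1$ and $\beta$ the eigenvariable. By IH there is an $\alpha$-flow witnessing $\bigwedge \Gamma \wedge \forall \gamma \prec \beta\, A(\gamma) \rhd \bigvee \Delta \vee A(\beta)$. Mirroring Lemma 2.28(ii), I introduce the protected predicate
\[
F(\beta, \vec{x}) := (\bigwedge \Gamma \wedge \forall \gamma \prec \beta\, A(\gamma)) \vee \bigvee \Delta,
\]
which lies in $\forall_1$ because the class is closed under finite conjunctions, finite disjunctions, and the bounded universal quantifier $\forall \gamma \prec \beta$. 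Applying disjunction application with $\bigvee \Delta$ to the IH flow and contracting the duplicated $\bigvee \Delta$ gives $F(\beta) \rhd \bigvee \Delta \vee A(\beta)$. A conjunction application with $F(\beta)$ (and contraction) followed by a $\PV$-provable distributive simplification then yields the key one-step flow $F(\beta) \rhd F(\beta+1)$.

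To close, note that $F$ is $\PV$-provably decreasing in its first argument, so $\forall \gamma \prec \beta+1\, F(\gamma) \leftrightarrow F(\beta)$ always, while $\forall \gamma \prec \beta\, F(\gamma) \leftrightarrow F(\beta - 1)$ for successor $\beta$ and $\forall \gamma \prec \beta\, F(\gamma) \leftrightarrow F(\beta)$ for limit $\beta$. In either case the hypothesis of strong gluing (Lemma 4.7(ii)), applied with $F$ in place of $A$, reduces uniformly in $\beta$ to the one-step flow already established (trivially in the limit case). Strong gluing then delivers $\top \rhd F(\delta+1)$, and a final conjunction application with $\bigwedge \Gamma$ together with a propositional simplification inside $\PV$ yields $\bigwedge \Gamma \rhd \bigvee \Delta \vee A(\delta)$, i.e.\ the desired $\Gamma \rhd \Delta, A(\delta)$.

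The principal obstacle is that $\forall_1$ is not closed under implication from a universal antecedent, so the naive predicate $\bigwedge \Gamma \rightarrow (\bigvee \Delta \vee A(\gamma))$ — which would transparently iterate the IH — cannot serve as the stage formula $H$. Keeping $\bigwedge \Gamma$ positive in $F$ sidesteps this, at the price of carrying $\bigvee \Delta$ as a disjunctive escape clause throughout the flow and verifying the propositional simplifications inside $\PV$. A secondary subtlety is that the overall length of the constructed flow is essentially the ordinal product of the IH's length with $\delta$, so the construction tacitly relies on the closure of $\prec$ under the ordinal arithmetic used in the proof of Lemma 4.7(ii).
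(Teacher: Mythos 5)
The overall structure of your proof mirrors the paper's (which is deliberately terse, dispatching the induction case with ``similar to Lemma 2.28''), and most of the routine cases are fine. Your observation that contraction is immediate because ordinal reductions are literally $\PV$-implications is actually cleaner than the paper's appeal to decidability, and the stage formula $F(\beta) := (\bigwedge\Gamma \wedge \forall\gamma\prec\beta\,A(\gamma)) \vee \bigvee\Delta$ is the natural transfinite analogue of the Lemma 2.28 invariant. The derivation of the one-step flow $F(\beta)\rhd F(\beta+1)$ from the IH by disjunction application, contraction, and distributivity is also correct.

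The gap is in your invocation of strong gluing. Lemma 4.7(ii) requires a single flow $\forall\gamma\prec\beta\,F(\gamma) \rhd \forall\gamma\prec\beta+1\,F(\gamma)$ uniformly in $\beta$, in order to conclude $\top\rhd F(\theta)$. You dispatch this by splitting on whether $\beta$ is a successor or a limit, claiming the successor case reduces to the one-step flow and the limit case is trivial because ``$\forall\gamma\prec\beta\,F(\gamma)\leftrightarrow F(\beta)$.'' But at $\beta=0$ the hypothesis of strong gluing demands $\top \rhd F(0)$, and $F(0) = (\bigwedge\Gamma\wedge\top)\vee\bigvee\Delta = \bigwedge\Gamma\vee\bigvee\Delta$, which is not $\PV$-derivable from $\top$ in general. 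Your ``decreasing'' observation only gives $F(\beta)\rightarrow\forall\gamma\prec\beta\,F(\gamma)$, the easy direction; the accumulation direction $\forall\gamma\prec\beta\,F(\gamma)\rightarrow F(\beta)$, which you need, is a separate property that holds for limits $\beta\succ 0$ but fails vacuously at $\beta=0$ because the antecedent is $\top$ while the consequent is contingent. So strong gluing as stated cannot deliver $\top\rhd F(\delta+1)$, and the final step of your argument does not go through.

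The idea is salvageable, but not by literally invoking Lemma 4.7(ii) with $F$ as the progressive formula. Instead, construct the $\alpha$-flow from $\bigwedge\Gamma$ to $\bigvee\Delta\vee A(\delta)$ directly: take $H(0) := F(0)$ (so the start condition $\bigwedge\Gamma\rightarrow H(0)$ is the trivial $\PV$-implication $\bigwedge\Gamma\rightarrow\bigwedge\Gamma\vee\bigvee\Delta$), concatenate the one-step sub-flows $F(\beta)\rhd F(\beta+1)$ for $\beta\preceq\delta$ as in the proof of Lemma 4.7(ii), use the accumulation property of $F$ at nonzero limits, and finish with the $\PV$-implication $F(\delta+1)\rightarrow\bigvee\Delta\vee A(\delta)$. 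Equivalently, you could first prove a side-formula variant of strong gluing (with a non-trivial start $C\rightarrow A(0)$ replacing the built-in base case $\top\rhd A(0)$) and then apply it with $C=\bigwedge\Gamma$. Either way the base case must enter through the initial step of the constructed flow, not through the $\beta=0$ instance of the strong-gluing hypothesis.
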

\begin{proof}

We prove the lemma by induction on the length of the free-cut free proof of $\Gamma(\vec{x}) \Rightarrow \Delta(\vec{x})$.\\

1. (Axioms). If $\Gamma(\vec{x}) \Rightarrow \Delta(\vec{x})$ is a logical axiom then the claim is trivial. If it is a non-logical axiom then the claim will be also trivial because all non-logical axioms are provable in $\PV$. Therefore there is nothing to prove.\\

2. (Structural Rules). The case for weakening and exchange are trivial. For the contraction, note that all formulas are $\forall_1$ which means that having all quantifiers, it is possible to decide in polynomial-time which formula is true and hence we can handle the contraction case. \\ 

3. (Cut). It is similar to the Lemma \ref{t2-28}.\\

4. (Propositional). The conjunction and disjunction cases are proved in the Lemma \ref{t4-6}. The implication and negation cases are trivial because they should be quantifier-free and hence we can manipulate them as in the Lemma \ref{t2-26} and \ref{t2-29}.\\

5. (Universal Quantifier, Right). If $\Gamma(\vec{x}) \Rightarrow \Delta(\vec{x}), \forall z  B(\vec{x}, z)$ is proved by the $\forall R$ rule by $\Gamma(\vec{x}) \Rightarrow \Delta(\vec{x}), B(\vec{x}, z)$, then by IH, $\Gamma(\vec{x}) \rhd \Delta(\vec{x}), B(\vec{x}, z)$. Therefore, there exist an ordinal $\beta$ and a formula $H(\gamma, \vec{x}, z) \in \forall_1$ such that the conditions of the Definition \ref{t4-3} are provable in $\PV$. Define $ \beta'=\beta$ and $H'(\gamma, \vec{x})=\forall z H(\gamma, \vec{x}, z)$. Since $H(\gamma, \vec{x}, z) \in \forall_1$ then $\forall z H(\gamma, \vec{x}, z) \in \forall_1$. The other conditions to check that the new sequence is an $\alpha$-flow is a straightforward consequence of the fact that if 
\[
\PV \vdash \forall \gamma \prec \delta H(\gamma, z, \vec{x}) \rightarrow \gamma \prec \delta+1 \; H(\gamma, z, \vec{x}),
\]
then
\[
\PV \vdash \forall \gamma \prec \delta \forall z H(\gamma, z, \vec{x}) \rightarrow \forall \gamma \prec \delta+1 \forall z H(\gamma, z, \vec{x}).
\]
\\
6. (Universal Quantifier, Left). If $\Gamma(\vec{x}), \forall z B(\vec{x}, z) \Rightarrow \Delta(\vec{x})$ is proved by the $\forall L$ rule by $\Gamma(\vec{x}), B(\vec{x}, s(\vec{x})) \Rightarrow \Delta(\vec{x})$, then since $\PV \vdash \forall z B(\vec{x}, z) \rightarrow B(\vec{x}, s(\vec{x}))$, we have
\[
\forall z B(\vec{x}, z) \leq B(\vec{x}, s(\vec{x})).
\]
And since
\[
\Gamma(\vec{x}), B(\vec{x}, s(\vec{x})) \rhd \Delta(\vec{x}),
\]
by using cut we have
\[
\Gamma(\vec{x}), \forall z B(\vec{x}, z) \rhd \Delta(\vec{x}).
\]
\\
7. (Induction). The proof is similar to the proof of Lemma \ref{t2-28}.

\end{proof}
And also like in the bounded case we have the completeness theorem:
\begin{thm}\label{t4-9}(Completeness)
If $\Gamma \cup \Delta \subseteq \forall_1$ and $\Gamma \rhd \Delta$, then $\TI(\forall_1, \prec) \vdash \Gamma \Rightarrow \Delta$.
\end{thm}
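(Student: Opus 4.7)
The plan is to mirror the completeness argument of Theorem 2.33, substituting transfinite induction in $\TI(\forall_1, \prec)$ for the ordinary bounded induction used there. Suppose $\Gamma \rhd \Delta$. Applying Definition 4.3 with $A := \bigwedge \Gamma$ and $B := \bigvee \Delta$, I extract an ordinal $\beta$ and a formula $H(\gamma, \vec{x}) \in \forall_1$ such that the three flow conditions are $\PV$-provable. Since $\PV \subseteq \PRA$ and, by Lemma 4.2, $\PRA + \PRWO(\prec) \subseteq \TI(\forall_1, \prec)$, each of those conditions also holds in the target theory.

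The heart of the proof is a single application of the rule $(Ind_\alpha)$ to the $\forall_1$ formula
$$L(\eta, \vec{x}) \;:=\; \forall \delta \prec \eta \; H(\delta, \vec{x}),$$
which remains $\forall_1$ because $H$ is, and the leading bounded quantifier absorbs into the prenex. The premise to discharge is $A, \forall \gamma \prec \eta \; L(\gamma) \Rightarrow L(\eta)$. I would verify it by fixing $\delta \prec \eta$: the inductive hypothesis yields $L(\delta)$, and condition (ii) instantiated at $\gamma = \delta$ supplies $L(\delta) \to L(\delta+1)$, whose conclusion contains $H(\delta)$. To respect the bound $\gamma \prec \beta$ built into (ii), I would localize the induction by replacing $L(\eta)$ with the guarded variant $F(\eta) := \eta \succ \beta \vee L(\eta)$, still a $\forall_1$ formula: the case $\eta \succ \beta$ makes $F(\eta)$ trivially true, while condition (i) feeds the antecedent $A$ into the base of the flow.

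Once the premise is discharged, $(Ind_\alpha)$ yields $\TI(\forall_1, \prec) \vdash A \Rightarrow F(\beta)$, and since $\beta \not\succ \beta$ the surviving disjunct is exactly $L(\beta)$. A cut against condition (iii), which asserts $L(\beta) \Rightarrow B$, closes off $A \Rightarrow B$ and hence $\Gamma \Rightarrow \Delta$. The only potentially delicate point is purely syntactic: checking that $F$ remains strict $\forall_1$ after prenexing and that the induction premise goes through uniformly at successor and limit stages. This is routine once the case split on $\eta \preceq \beta$ versus $\eta \succ \beta$ is written out, and conceptually it is the exact ordinal analogue of the inductive step used in Theorem 2.33.
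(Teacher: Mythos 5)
Your proof is correct and follows essentially the same route as the paper's: extract $(H,\beta)$ from the $\alpha$-flow, apply a single instance of $(Ind_\alpha)$ to derive $\forall \gamma \prec \beta\; H(\gamma,\vec{x})$ from the base and inductive conditions, then cut against condition (iii). You are merely more explicit than the paper — inducting on the accumulated prefix $L(\eta) = \forall \delta \prec \eta\; H(\delta,\vec{x})$ with a guard above $\beta$, rather than the paper's terse "induction on $H$" — but this is just making precise what the paper leaves implicit, not a different argument.
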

\begin{proof}
If there exists an $\alpha$-flow from $\Gamma$ to $\Delta$ then it means that there exists $(H, \beta)$ such that 
\begin{itemize}
\item[$(i)$]
$\PV \vdash A(\vec{x}) \rightarrow \forall \gamma \prec H(1, \vec{x})$.
\item[$(ii)$]
$\PV \vdash \forall \gamma \prec \beta \; [\forall \delta \prec \gamma \; H(\delta, \vec{x}) \rightarrow \forall \delta \prec \gamma+1 \; H(\delta, \vec{x})]$.
\item[$(iii)$]
$\PV \vdash \forall \gamma \prec \beta \; H(\gamma, \vec{x}) \rightarrow B(\vec{x})$.
\end{itemize} 
Therefore, using induction on $ H(\delta, \vec{x})$ we have 
\[
\TI(\forall_1, \prec) \vdash H(0, \vec{x}) \rightarrow H(\gamma, \vec{x}).
\]
And hence
\[
\TI(\forall_1, \prec) \vdash H(0, \vec{x}) \rightarrow \forall \gamma \prec \beta \; H(\gamma, \vec{x}),
\]
and thus $\TI(\forall_1, \prec) \vdash A(\vec{x}) \Rightarrow B(\vec{x})$.
\end{proof}
In the following we will use the $\PLS(\prec_{\alpha})$ problems to characterize the $\NP$ search problems of any theory with $\Pi^0_2$-ordinal $\alpha$.
\begin{thm}\label{t4-10}
Let $T$ be a theory of arithmetic and $\alpha_T$ be its $\Pi^0_2$-ordinal with a $\PV$-representation $\prec_{\alpha_T}$ of the order and a $\PV$-representation of its ordinal arithmetic, then $\TFNP(T)\equiv_{\PV} \PLS(\prec_{\alpha_T})$.
\end{thm}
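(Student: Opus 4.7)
The plan is to combine the $\Pi^0_2$-ordinal transfer with the soundness theorem for ordinal flows and then adapt the $\PLS$-extraction of Theorem \ref{t3-14} to the ordinal setting. I will treat the two inclusions separately.

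For $\TFNP(T) \subseteq_{\PV} \PLS(\prec_{\alpha_T})$, I would start from a total $\NP$ search problem of $T$, say $T \vdash \forall x \exists y \leq t(x)\, A(x,y)$ with $A$ quantifier-free in $\PV$. Since this sentence is $\Pi^0_2$, the definition of $\Pi^0_2$-ordinal yields the same theorem in $\PRA + \PRWO(\prec_{\alpha_T})$, and Lemma \ref{t4-2} lifts it to $\TI(\forall_1, \prec_{\alpha_T})$. Applying Soundness (Theorem \ref{t4-8}) to the $\forall_1$-sequent $\forall y \leq t(x)\, \neg A(x,y) \Rightarrow \bot$ produces an $\alpha$-flow $(H, \beta)$, where we may write $H(\gamma, x) = \forall \vec{v}\, G(\gamma, \vec{v}, x)$ with $G$ a $\PV$-predicate. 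A Herbrand-style extraction, in the spirit of the proof of Corollary \ref{t3-22}, gives $\PV$-terms $Z$, $\vec{V}$, $\vec{U}$ witnessing the existentials appearing in the three defining implications of the flow. Using them, I would define a $\PLS(\prec_{\alpha_T})$ instance with states $(\gamma, \vec{v}, z)$, cost function $c = \gamma$ ordered by $\prec_{\alpha_T}$, initial state $(\beta, \vec{U}, 0)$, neighbor function that applies $\vec{V}$ to decrease $\gamma$ and switches to $z = Z(\vec{v})$ when $\gamma$ reaches $0$, and goal predicate $z \leq t(x) \wedge A(x, z)$; a projection then extracts the desired $y$ from the goal state, yielding a $\PV$-reduction of the original search problem to the $\PLS$ instance.

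For the reverse inclusion $\PLS(\prec_{\alpha_T}) \subseteq_{\PV} \TFNP(T)$, I would argue that any $\PLS(\prec_{\alpha_T})$ instance is provably total in $\PRA + \PRWO(\prec_{\alpha_T})$: iterating the neighbor function from the initial state produces a primitive recursive sequence of states whose costs strictly decrease in $\prec_{\alpha_T}$ until a fixed point is reached, which by the $\PLS$ axioms is a goal, and termination of such sequences is exactly what $\PRWO(\prec_{\alpha_T})$ asserts. Since totality of a $\PLS$ problem is $\Pi^0_2$, the definition of $\Pi^0_2$-ordinal transfers it back to $T$. The main obstacle I expect is on the forward side: making sure that the cost and neighbor functions are polynomial-time and that the $\PLS$ axioms (notably the cost-monotonicity $c(x, N(x,s)) \prec c(x,s)$ and the biconditional $G(x,s) \leftrightarrow (N(x,s) = s \wedge F(x,s))$) are literally provable in $\PV$. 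This is exactly where the $\PV$-representability of the ordinal arithmetic is essential, and where the particular shape of Definition \ref{t4-3} — packaging the step as $\forall \delta \prec \gamma\, H(\delta) \rightarrow \forall \delta \prec \gamma+1\, H(\delta)$ rather than as a single-step transition $H(\gamma) \rightarrow H(\gamma+1)$ — becomes critical, since it allows the neighbor function to be computed in one $\PV$-step from previously available witnesses.
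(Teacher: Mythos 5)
Your proposal follows essentially the same route as the paper's own proof: both directions pass through $\PRA+\PRWO(\prec_{\alpha_T})$ and $\TI(\forall_1,\prec_{\alpha_T})$ via the $\Pi^0_2$-ordinal definition and Lemma \ref{t4-2}, apply the ordinal-flow soundness theorem \ref{t4-8} and a Herbrand-style $\PV$-witnessing of the three flow conditions to build the $\PLS(\prec_{\alpha_T})$ instance, and establish the converse by iterating the neighbor function into a primitive recursive descending sequence whose termination is exactly $\PRWO(\prec_{\alpha_T})$. Your closing remark about why the step must be packaged as $\forall\delta\prec\gamma\,H(\delta)\rightarrow\forall\delta\prec\gamma+1\,H(\delta)$ (to make progress at limit ordinals with a single $\PV$-step) and about the role of $\PV$-representable ordinal arithmetic correctly identifies the same technical points the paper relies on.
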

\begin{proof}
First of all, it is easy to see that $\exists s \; \neg c(N(x, s)) \prec c(x, s) \wedge F(x, s)$ is provable in $\PRA+\PRWO(\prec)$. Define $f(0)=(c(x, i(x)), i(x))$ and
\[
f(n+1)=
\begin{cases}
(c(x, N(x, f_0(n))), N(x, f_0(n))) & c(x, N(x, f_0(n)) \prec c(x, f_0(n)) \wedge F(x, f_0(n)) \\
f(n) & o.w. \\
\end{cases} 
\]
where the order on the range of $\prec'$ is the order of the ordinal $ot(\prec_{c(x, i(x))}) \times \omega$. Since $\prec'$ is a sub-order of $\prec$, by $\PRWO(\prec)$ there exists some $n$ such that $f(n+1) \nprec f(n)$. By definition of $f$, this $n$ should impose the property that $c(x, N(x, f_0(n)) \nprec c(x, f_0(n)) \vee \neg F(x, f_0(n))$. It is easy to show by induction on $m$ that $F(x, f_0(m))$ for any $m$, hence $c(x, N(x, f_0(n)) \nprec c(x, f_0(n)) \wedge F(x, f_0(n))$. Now it is enough to pick $s=f_0(n)$. Therefore, $\PRA+\PRWO(\prec) \vdash \exists s \; N(x, s)=s \wedge F(x, s)$ and therefore, $\PRA+\PRWO(\prec) \vdash \exists s \; G(x, s)$. And finally, $\PRA+\PRWO(\prec) \vdash \exists s \; |s| \leq p(|x|) \wedge G(x, s)$ which by definition means $T  \vdash \exists s \; |s| \leq p(|x|) \wedge G(x, s)$.\\

For the converse, assume that $T \vdash \forall x \exists y |y| \leq p(|x|) A(x, y)$ where $A(x, y)$ is quantifier-free in the language of $\PV$. Then by definition $\PRA+\PRWO(\prec) \vdash \forall x \exists y |y| \leq p(|x|) A(x, y)$ because $\forall x \exists y \; |y| \leq p(|x|) \wedge A(x, y) \in \Pi^0_2$. Then by Lemma \ref{t4-2} we have 
\[
\TI(\forall_1, \prec_{\alpha_T}) \vdash \forall y (|y| \leq p(|x|) \rightarrow \neg A(x, y)) \Rightarrow \bot.
\] 
By Theorem \ref{t4-8}  we have $\forall y (|y| \leq p(|x|) \rightarrow \neg A(x, y)) \rhd \bot$. Hence there exists $(H, \beta)$ such that 
\begin{itemize}
\item[$(i)$]
$\PV \vdash \forall y (|y| \leq p(|x|) \rightarrow \neg A(x, y)) \rightarrow \forall \gamma \prec 1 \; H(1, \vec{x})$.
\item[$(ii)$]
$\PV \vdash \forall \gamma \prec \beta \; [\forall \delta \prec \gamma \; H(\delta, \vec{x}) \rightarrow \forall \delta \prec \gamma+1 \; H(\delta, \vec{x})]$.
\item[$(iii)$]
$\PV \vdash \forall \gamma \prec \beta \; H(\gamma, \vec{x}) \rightarrow \bot$.
\end{itemize} 
Since $H \in \forall_1$ we have $H(\gamma, x)=\forall z G(\gamma, x, z)$. On the other hand, all the conditions are provable in $\PV$ which means that we can witness the existential quantifiers by polytime functions. Hence,
\begin{itemize}
\item[$(i')$]
$\PV \vdash (|Y(x, z)| \leq p(|x|) \rightarrow \neg A(x, Y(x, z))) \rightarrow G(0, \vec{x}, z)$.
\item[$(ii')$]
$\PV \vdash \forall \gamma \prec \beta \; [\Delta(\delta) \prec \gamma \rightarrow G(\Delta(\delta), \vec{x}, Z(\delta)) \rightarrow \delta \prec \gamma+1  \rightarrow G(\delta, \vec{x}, z)]$.
\item[$(iii')$]
$\PV \vdash  (\Gamma \prec \beta \rightarrow  G(\Gamma, \vec{x}, Z)) \rightarrow \bot$.
\end{itemize}
Put $\delta=\gamma$ in $(ii')$, then we have 
\[
\PV \vdash \forall \gamma \prec \beta \; [(\Delta(\gamma) \prec \gamma \rightarrow G(\Delta(\gamma), \vec{x}, Z(\gamma)) \rightarrow  G(\gamma, \vec{x}, z)].
\]

Define $F(x, \gamma, y, z)=\neg G(x, \gamma, z)$ and 
\[
N(x, \gamma, y, z)=
\begin{cases}
(x, \Delta(\gamma), y, Z(\gamma)) & \gamma \neq 0, \neg G(x, \gamma, z) \\
(x, 0, y, 0) & \gamma \neq 0, G(x, \gamma, z) \\
(x, \gamma, y, z) & \gamma=0
\end{cases} 
\]
and $i(x)=(x, \Gamma, 0, Z)$ and 
$c(x, \gamma, y, z)=\gamma$, $Goal(x, \gamma, y, z)=G(x, 0, z)$. It is easy to see that this new data is a $\PLS(\prec_{\alpha_T})$ problem. Now it is not hard to shift everything for $\gamma \prec \omega$ one point to the right to add $|y| \leq p(|x|) \wedge A(x, y)$ to the first point and use $Y$ for its neighborhood. Now we have a $\PLS(\prec_{\alpha_T})$ problem and finally by the answer of the problem namely $(x, \gamma, y, z)$ we can compute $y$ which is the witness for $A$ and computable just by a projection. Note that this reduction is provable in $\PV$. 
\end{proof}
And as a corollary we have:
\begin{cor}\label{t4-11}
\begin{itemize}
\item[$(i)$]
$\TFNP(I\Sigma_1)\equiv \PLS(\prec_{\omega^2})$.
\item[$(ii)$]
For all $n > 1$, $\TFNP(I\Sigma_n)\equiv \PLS(\prec_{\omega_n})$.
\item[$(iii)$]
$\TFNP(\PA)\equiv \PLS(\prec_{\epsilon_0})$.
\item[$(iv)$]
For any representable $\epsilon_0 \prec \alpha $, $\TFNP(\PA+\TI(\alpha)) \equiv \PLS(\prec_{\alpha})$.
\end{itemize}
\end{cor}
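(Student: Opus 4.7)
The plan is to derive Corollary \ref{t4-11} as an immediate specialization of Theorem \ref{t4-10}, using Theorem \ref{t3-8} to identify the relevant $\Pi^0_2$-proof-theoretic ordinals in each case. The overall strategy is: for each theory $T$ in the list, read off $\alpha_T$ from Theorem \ref{t3-8}, verify that the standard ordinal notation system for $\alpha_T$ admits a $\PV$-representation of both the ordering $\prec_{\alpha_T}$ and of ordinal arithmetic, and then invoke Theorem \ref{t4-10} to conclude $\TFNP(T) \equiv_{\PV} \PLS(\prec_{\alpha_T})$.

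Concretely, I would proceed case by case. For $(i)$ take $T = I\Sigma_1$; by Theorem \ref{t3-8}$(i)$ we have $\alpha_T = \omega^2$, and the standard dyadic notation for ordinals below $\omega^2$ is clearly polytime computable. Apply Theorem \ref{t4-10}. For $(ii)$ take $T = I\Sigma_n$ with $n > 1$; Theorem \ref{t3-8}$(ii)$ gives $\alpha_T = \omega_n$, with the usual stack-of-$\omega$'s notation admitting the required $\PV$-representation. For $(iii)$, $\alpha_{\PA} = \epsilon_0$ by Theorem \ref{t3-8}$(iii)$, using Cantor normal form notations, which is the standard testbed where polytime representability of $\prec_{\epsilon_0}$ and of $+$, $\cdot$, ordinal exponentiation is well documented. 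For $(iv)$, we are told directly that $\alpha$ is $\PRA$-representable (hence in particular $\PV$-representable in the standard sense) and Theorem \ref{t3-8}$(iv)$ identifies $\alpha_{\PA+\TI(\alpha)} = \alpha$. In each case, Theorem \ref{t4-10} then delivers the stated equivalence.

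The only genuine subtlety to check is the gap between the $\PRA$-representability assumed in Theorem \ref{t3-8} and Definition \ref{t3-7}, and the $\PV$-representability required in Theorem \ref{t4-10}. For the concrete ordinals in $(i)$--$(iii)$ this is routine: the standard ordinal notation systems for $\omega^2$, $\omega_n$ and $\epsilon_0$ all have polytime computable comparison and polytime ordinal arithmetic, so the $\PRA$-representation can be taken to be already a $\PV$-representation. For $(iv)$ we do not a priori have polytime access, so strictly speaking the statement of Corollary \ref{t4-11}$(iv)$ should be read as asserting that whenever $\alpha$ admits such a $\PV$-representation (which is the intended convention throughout this section, and matches how $\prec_\alpha$ is used in the definition of $\PLS(\prec_\alpha)$), we obtain the stated equivalence.

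The main ``obstacle'' is therefore not a technical difficulty in the proof itself, which is essentially a one-line application of Theorem \ref{t4-10}, but rather the bookkeeping needed to confirm that the standard notation systems for $\omega^2$, $\omega_n$, and $\epsilon_0$ furnish $\PV$-representations of the order and arithmetic on these ordinals. Once this is accepted (as it is in the literature on polytime ordinal arithmetic up to $\epsilon_0$), the four items of Corollary \ref{t4-11} follow immediately by substitution into Theorem \ref{t4-10}.
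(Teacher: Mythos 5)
Your proposal is correct and follows essentially the same route as the paper: instantiate Theorem \ref{t4-10} with the $\Pi^0_2$-ordinals supplied by Theorem \ref{t3-8}, after observing that the relevant ordinal notation systems admit $\PV$-representations of the order and its arithmetic. The paper's one-line proof simply cites \cite{BBP} for the polytime ordinal arithmetic up to (and beyond) $\epsilon_0$, which is exactly the ``bookkeeping'' you identify as the only nontrivial point.
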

\begin{proof}
It is enough to have a $\PV$-representation of these ordinals and the basic arithmetic on them which was carried out in \cite{BBP}.
\end{proof}

\vspace{4pt}
\textbf{Acknowledgment.} 
We wish to thank Pavel Pudlak for his support, his suggestions and the invaluable discussions that we have had since the beginning of this project. We are also genuinely grateful to Sam Buss and Raheleh Jalai for their constructive suggestions and the helpful discussions on the crucial and primitive stages of developing the theory.


\begin{thebibliography}{99} 
\addcontentsline{toc}{section}{References} 
\bibitem{Be}
A. Beckmann, A Characterisation of Definable $\NP$ Search Problems in Peano Arithmetic, Logic, Language, Information and Computation, 16th International Workshop, WoLLIC 2009, Tokyo, Japan, June 21-24, 2009.
\bibitem{BBP}
A. Beckmann, S. R. Buss, C. Pollett, Ordinal Notations and Well-Orderings in Bounded Arithmetic, Annals of Pure and Applied Logic 120( 2002), 197-223.
\bibitem{BB}
A. Beckmann, S. R. Buss, Polynomial Local Search in the Polynomial Hierarchy and Witnessing in Fragments of Bounded Arithmetic, Journal of Mathematical Logic 9, 1 (2009) 103-138. 
\bibitem{Po}
W. Pohlers, A short course in ordinal analysis, in: Aczel, Simmons and Wainer (1992), pp. 27-78.
\bibitem{Ta}
N. Thapen, Higher complexity search problems for bounded arithmetic and a formalized no-gap theorem, Archive for Mathematical Logic, Vol 50:7-8, pages 665-680, 2011. 
\bibitem{ST}
A. Skelley, N. Thapen, The provably total search problems of bounded arithmetic, Proceedings of the London Mathematical Society, Vol 103:1, pages 106-138, 2011. 
\end{thebibliography}
\end{document}